\documentclass[12pt]{amsart}
\usepackage{color}
\usepackage{amssymb}
\usepackage{comment}
\usepackage{pdfpages}
\usepackage{graphicx}
\usepackage{dcolumn}

\usepackage[normalem]{ulem}

\RequirePackage[numbers]{natbib}
\RequirePackage[colorlinks=true, pdfstartview=FitV, linkcolor=blue,
  citecolor=blue, urlcolor=blue]{hyperref}
\RequirePackage{hypernat}
\usepackage{paralist}

\definecolor{darkgreen}{rgb}{0.0,0.39,0.0}

\usepackage{graphicx}
\graphicspath{{./figures/}}
\usepackage{amsfonts}
\usepackage{amsmath}
\usepackage{amsthm}
\usepackage{amssymb}
\usepackage{amsbsy}
\usepackage{epsfig}
\usepackage{fullpage}
\usepackage{natbib, mathrsfs}
\usepackage{verbatim}
\usepackage[latin1]{inputenc}
\usepackage{mhequ}
\usepackage{algorithm}
\usepackage{algpseudocode}
\usepackage{float}
\usepackage[letterpaper,left=1.5in,right=1.5in]{geometry}
\usepackage{enumitem}

\numberwithin{equation}{section}

\def \be{\begin{equs}}
\def \ee{\end{equs}}

\let\HungarianH\H
\def \H{\mathcal{H}}

\newcommand{\tEsc[1]}{\tau_{#1,\mathrm{esc}}}

\newtheorem{theorem}{Theorem}[section]
\newtheorem{lemma}[theorem]{Lemma}
\newtheorem{corollary}[theorem]{Corollary}

\newtheorem{assumption}[theorem]{Assumption}

\theoremstyle{plain}

\newtheorem*{thm-non}{Theorem}

\theoremstyle{definition}

\newtheorem{defn}[theorem]{Definition}
\newtheorem{remark}[theorem]{Remark}

\definecolor{WowColor}{rgb}{.75,0,.75}
\definecolor{SubtleColor}{rgb}{0.9,0,0}

\newcounter{margincounter}

\newcounter{latercounter}

\begin{document}
\allowdisplaybreaks

\title[Convergence and Boundary Decay Rates]{Convergence Rates of Ordering, Testing and Estimation Procedures for Graphons With Fast Boundary Decay Rates
}

\author{Jeannette Janssen, Na Lin and Aaron Smith}







\begin{abstract}

In latent-position random graph models (LPMs), latent vertex positions $U_{1},\ldots,U_{n}$ are sampled from some distribution on a latent space $\Omega$, then edges of an observed graph $G = ([n],E)$ are sampled with some probability $\mathbb{P}[(i,j) \in E ]=w(U_i,U_j)$ that depends on the unobserved latent positions. LPMs are ubiquitous in the statistical analysis of networks, offering models that have good empirical performance, strong theoretical guarantees, and tractable algorithms. The special case $\Omega = [0,1]$ is important, as it corresponds to graphs with temporal or preference-based structure. In this paper, we study three problems related to LPMs with latent space $[0,1]$: \textit{ordering} the vertices according to the latent positions, \textit{estimating} the generating graphon $w$, and \textit{testing} whether an observed graph $G$ could have come from an LPM with state space $[0,1]$.  Our results on the ordering problem greatly generalize two observations of \cite{janssen2022reconstruction}: (i) for \textit{some} families of graphons, the best estimate of the ordering converges much faster than the usual statistical rate of $\frac{1}{\sqrt{n}}$, and (ii) this occurs even though, for the same families of graphons, the best estimate of the latent positions still occurs at the usual $\frac{1}{\sqrt{n}}$ rate. As a main consequence, we develop a computationally-efficient graphon-estimation algorithm and show that it has the same convergence rate as the non-explicit optimal algorithm of \cite{gao_rog}. We also derive and analyze a testing procedure.
\end{abstract}

\maketitle

\section{Introduction}

In this paper, we propose and analyze several algorithms for doing statistical inference on LPMs. Our motivating problem is the \textit{seriation} problem, first introduced in \cite{petrie1899} in the context of archaeology (see also more recent surveys \cite{laur17,liiv10}). A stylized version of the problem in \cite{petrie1899} is: if I observe several styles of urn across several grave sites, how can I order the styles  from oldest to most recent? In order to make this question tractable, we must make some assumptions; the usual assumption in  seriation is that two urn styles are more likely to be found at the same site if they were popular at similar times. 

This concrete problem gives rise to a more abstract ordering problem: given a graph $G = (V,E)$ (with vertices corresponding to urn styles, and edges between urn styles that occur in the same site), how should I order the vertices of $G$? In order to make this more abstract problem tractable, we must make an analogous assumption. We will state the assumption in the language of Robinson graphons, which we now introduce. Recall that a \textit{graphon} is a symmetric measurable function $w \, : \, [0,1]^{2} \mapsto [0,1]$ (see \textit{e.g.} \cite{lovasz2012large} for a survey). Under suitable regularity conditions, a graphon defines an algorithm for sampling a random graph of any size $n \in \mathbb{N}$: 

\begin{enumerate}
    \item Sample i.i.d. sequences $\{U_{i}\}_{i=1}^{n}, \{ U_{i,j} \}_{1 \leq i < j \leq n} \stackrel{i.i.d.}{\sim} \mathrm{Unif}([0,1]),$ then
    \item Let $V = [n] \equiv \{1,2,\ldots, n\}$ and define the edge set by 
    \be \label{eq:graphon_sampling}
    \{(i,j) \in E \} \Leftrightarrow \{U_{i,j} < w(U_{i},U_{j}) \}.
    \ee 
\end{enumerate}

We write $G \sim w$ if $G$ is a random graph obtained from graphon $w$ in this way, and call the sequence $\{U_{i}\}_{i=1}^{n}$ the \textit{latent positions} of the vertices. Following \cite{chuangpishit2015linear}, we say that a graphon $w$ is \textit{Robinson} or \textit{diagonally increasing} if it satisfies:
\be \label{EqDefGraphonRobinson}
w(x,y) \leq \min \{ w(x,z), w(z,y) \} 
\ee 
for all triples $0 \leq x < z < y \leq 1$. 

If we interpret the latent positions of the vertices as the times that they occur, we can see from Equation \eqref{eq:graphon_sampling} that the Robinson property  \eqref{EqDefGraphonRobinson} is exactly the assumption that two styles are more likely to occur in the same site if they were popular at similar times. 

In this more general setting, the seriation problem becomes: given an observed graph $G = ([n],E) \sim w$, find the permutation $\sigma_{true}\in S_n$ induced by the latent positions. We use the convention
\[
\sigma_{true}(i)=|\{j\in[n]:U_j\leq U_i\}|,
\]
so that $\sigma_{true}(i)<\sigma_{true}(j)$ if and only if $U_i<U_j$. It is natural to try to solve this problem via a two-step procedure along the following lines. For any embedding $\phi:[n]\to\mathbb{R}$, define the induced permutation $\sigma\in S_n$, breaking ties by original index, by:
\be \label{EqEstSigma}
\sigma(i)=\sigma(i,\phi)\equiv 1+ |\{j\in[n]:\phi(j)< \phi(i)\text{ or }(\phi(j)=\phi(i)\text{ and }j< i)\}|.
\ee 
We can then estimate $\sigma_{true}$ by first finding an estimate of the latent positions $\hat U_1,\ldots,\hat U_n$, then computing $\hat\sigma=\sigma(\cdot,\hat U)$.

\begin{remark}
The embedding $\phi=(U_1,\ldots,U_n)$ and its reverse $1-\phi$ are equally valid line embeddings, and the corresponding permutations cannot be distinguished from the observed graph. When comparing a permutation $\sigma$ with $\sigma_{true}$, we compare it with whichever of $\sigma_{true}$ and its reverse aligns better with $\sigma$.

\end{remark}

It is well-known that, in the special ``zero noise" case that $w$ is $\{0,1\}$-valued and no two rows of the observed adjacency matrix are identical, one can obtain exactly the correct permutation $\sigma_{true}$ by estimating the latent positions with a spectral embedding and then applying Equation \eqref{EqEstSigma} (see \textit{e.g.} \cite{atkins1998spectral}). In our previous work \cite{janssen2022reconstruction}, we showed the perhaps-surprising fact that this two-stage workflow can be far from optimal in the following sense: for many natural classes of graphons and appropriate notions of error, (i) the best embedding $\hat{\phi}$ has ``error" at least $\Omega(\frac{1}{\sqrt{n}})$, (ii) the permutation $\sigma(\cdot, \hat{\phi})$ obtained from an estimate $\hat{\phi}$ can have  ``error" at least as large as the error in the estimate, but despite this (iii) the optimal estimator of the permutation has ``error" going to 0 at the much faster rate $O(n^{-1+\epsilon})$ for any fixed $\epsilon > 0$. 

\subsection{Main Contributions}

{The main contributions of this paper} are (i) a substantial generalization of the ``faster-than-$\frac{1}{\sqrt{n}}$-convergence" result from \cite{janssen2022reconstruction} and (ii) applications of our new result to improve algorithms on the closely-related problems of graphon estimation and testing. We note that our results appear at first glance to contradict the minimax optimality results of \textit{e.g.} \cite{giraud2023mini}. See Section \ref{SecRelWork} for a short discussion of how to resolve this apparent contradiction.

Our results apply to all graphons that satisfy a set of broad assumptions, stated below as Assumptions \ref{DecayRateAssume} and \ref{U-r-relationship}. 
To help intuition, we introduce the following family of graphons indexed by noise rate $0<p\leq 1$, decay rate $0 \leq \alpha < 1$, and radius $0<r<0.5$: 
\be \label{ExSimpleGraphon}
w(x,y) &= \frac{p}{ r^{\alpha}}(r-d(x,y))^{\alpha} , \qquad &d(x,y) \leq r\\
w(x,y) &= 0, \qquad  &d(x,y) > r,
\ee 
where we think of $\alpha = 0$ as corresponding to the graphon:
\be \label{ExSimpleGraphon2}
w(x,y) &= p, \qquad &d(x,y) \leq r \\
w(x,y) &= 0, \qquad &d(x,y) > r.
\ee
See Figure \ref{fig:graphon-comparison} for a sketch representing the two types.  It is straightforward to verify that this family of graphons satisfies Assumption \ref{DecayRateAssume} with $\rho=r$ and $M_1=1$, and Assumption \ref{U-r-relationship} with $B=1$.

\begin{figure}[H]
    \centering
    \includegraphics[width=0.45\linewidth]{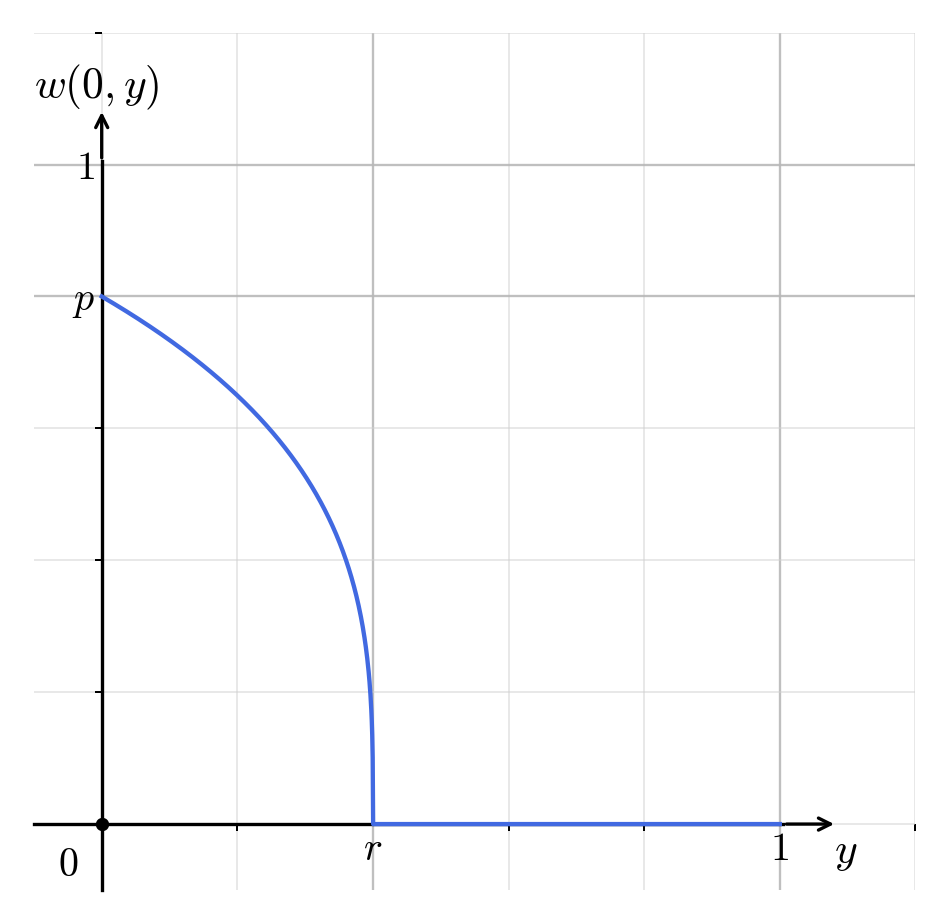}
    \hfill
    \includegraphics[width=0.45\linewidth]{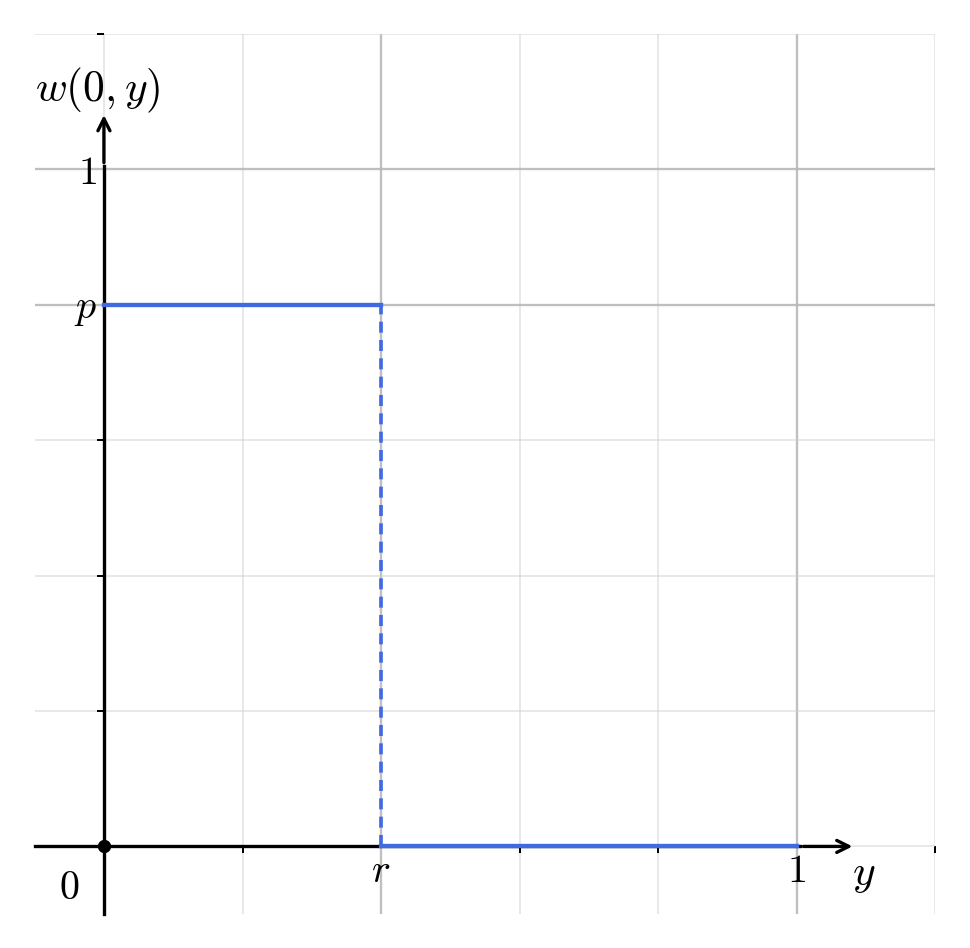}
    \caption{Sketches of $w(0,\cdot)$ for two graphon types. The left panel corresponds to \eqref{ExSimpleGraphon} with $\alpha=0.3$, while the right panel corresponds to the special case \eqref{ExSimpleGraphon2} with $\alpha=0$.}
    \label{fig:graphon-comparison}
\end{figure}

We emphasize that our results apply to a much broader and nonparametric class of graphons - these are simply intended as illustrations. 

Before stating the results, we first define the error measure between an estimated ordering and the true ordering.

\begin{defn}[Ordering Error]\label{def:ordering_error}
Let $\{U_{i}\}_{i=1}^{n}$ be a collection of distinct points in $[0, 1]$. Let $\sigma_{\text{true}}$ be the permutation induced by $(U_{1}, \ldots, U_{n})$ and let $\sigma_{rev}$ be its reverse. Given a permutation $\sigma$, the \emph{error} in $\sigma$ with respect to $\sigma_{true}$ is defined as:
\begin{equation*}
\mathfrak{D}=\min\{ \max_{i \in [n]} |\sigma(i)-\sigma_{true}(i)|,\max_{i \in [n]} |\sigma(i)-\sigma_{rev}(i)|\}.\nonumber
\end{equation*}
We will assume \emph{wlog} that this minimum is always achieved by $\sigma_{true}$. We will informally use the term \emph{error rate} to denote the normalized error $\mathfrak{D}/n$.
\end{defn}

\subsection*{Seriation}

Our first result is an algorithm, Algorithm \ref{alg:eras}, which improves upon any given coarse ordering to obtain an ordering that approximates $\sigma_{true}$ with an error that depends on the parameter $\alpha$. 
We view this result as a generalization of Theorem 2 of \cite{janssen2022reconstruction} from the special case $\alpha = 0$ to general $0 \leq \alpha < 1$. The precise result is stated in Section 3 as Theorem \ref{theorem-eras}; here we state an informal version of this theorem. 

\begin{theorem}
Suppose that $G$ is a graph of size $n$ sampled from a graphon that satisfies Assumptions \ref{DecayRateAssume} and \ref{U-r-relationship}, with decay rate $\alpha$. Suppose Algorithm \ref{alg:eras} is executed with input $G$ and suitable parameters, including a fixed $\epsilon >0$ small enough.  Then with high probability the resulting estimated ordering $\hat{\sigma}$ has normalized error $\mathfrak{D}/n$ satisfying
\be \label{IneqMainResInf}
\frac{\mathfrak{D}}{n} =O\left( n^{-\frac{1}{1 + \alpha} + \epsilon}\right).\nonumber
\ee  
\end{theorem}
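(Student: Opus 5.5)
We have not yet seen Algorithm \ref{alg:eras} or the precise Assumptions \ref{DecayRateAssume} and \ref{U-r-relationship}. We therefore take the algorithm to be a local refinement procedure. It starts from a coarse ordering whose error rate is $O(n^{-1/2+\epsilon})$, which a spectral or neighbourhood-based embedding provides. It then re-orders vertices by comparing their neighbourhoods near the boundary of the support, that is, where $d(x,y)\approx \rho$. The plan is to show that two vertices whose latent positions differ by more than $\delta_n = n^{-1/(1+\alpha)+\epsilon}$ are ordered correctly with high probability. Since the $U_i$ are i.i.d. uniform, a Chernoff bound shows that every interval of length $\delta_n$ contains $O(n\delta_n)$ points with high probability. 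Correct ordering of all pairs at distance greater than $\delta_n$ then gives $\mathfrak{D}=O(n\delta_n)$, which is the claimed bound.

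\textbf{Step 1 (signal).} Take $U_i<U_j$ with $U_j-U_i=\delta$. Consider a test window $W$ of width $h$ just beyond the right edge of the support of $w(U_i,\cdot)$, i.e.\ around $U_i+\rho$, or equivalently beyond $U_j+\rho$ on the other side. Using Assumption \ref{DecayRateAssume}, $w(x,y)\asymp (\rho-d(x,y))_+^{\alpha}$ near the boundary. Integrating over the window then gives
\[
\int_W \bigl(w(U_j,y)-w(U_i,y)\bigr)\,dy \gtrsim \delta^{1+\alpha}
\quad\text{when } h\asymp\delta .
\]
Vertex $j$ should therefore have about $n\delta^{1+\alpha}$ more neighbours than $i$ among the vertices placed in $W$ by the coarse ordering.

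\textbf{Step 2 (noise).} The neighbour count in $W$ has variance at most about $n\delta^{1+\alpha}$, because the edge probabilities in $W$ are $O(\delta^\alpha)$ and $W$ contains about $n\delta$ vertices. By Bernstein's inequality the count fluctuates by $O(\sqrt{n\delta^{1+\alpha}\log n})$. The signal $n\delta^{1+\alpha}$ dominates this exactly when $n\delta^{1+\alpha}\gg \log n$, i.e.\ $\delta\gg n^{-1/(1+\alpha)}$ up to logs. This is where the exponent in the theorem comes from, and the extra $n^{\epsilon}$ absorbs the logarithms and the union bound over all pairs.

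\textbf{Step 3 (iteration).} The coarse ordering only locates $W$ up to its current error $\eta$. When $\eta\gg\delta$, the window must be enlarged to width about $\eta$. The analysis then shows that one round reduces the error from $\eta$ to roughly $\max(\eta^{(1+\alpha)/2}\cdot\mathrm{poly},\,\delta_n)$; the exact map is to be derived from the precise algorithm. Iterating $O(\log(1/\epsilon))$ rounds brings the error down to $\delta_n$. Assumption \ref{U-r-relationship} will be used to make the boundary location consistent across positions, so that windows indexed by rank correspond to positions with comparable boundary behaviour.

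\textbf{Main obstacle.} The hardest step is controlling the misplacement of the window. Errors in the current ordering move vertices into or out of $W$, and this contributes a bias that can swamp the $\delta^{1+\alpha}$ signal. A further difficulty is dependence: the same graph is used both to build the ordering and to test with it. I would handle the dependence by sample splitting, using disjoint vertex subsets across rounds. I would bound the bias by exploiting monotonicity from the Robinson property \eqref{EqDefGraphonRobinson}, which makes misplacement errors one-sided, together with a careful choice of window width relative to the current error.
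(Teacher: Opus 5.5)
Your outline has the right architecture: nested rounds with sample splitting, a boundary signal, Robinson monotonicity to make mis-placement one-sided, and the Chernoff conversion from precision level to rank error as in Lemma \ref{PreciseLevel-Error}. However, the step that actually proves the theorem is missing.

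Steps 1--2 are an oracle computation, with a window of width $\delta$ sitting exactly at the support boundary. That boundary is $r(U_j)$, not $U_j+\rho$ ($\rho$ is only a lower bound on $r(x)-x$). It is unknown, and the current ordering locates it only to within its precision $\eta\gg\delta$. Everything therefore rests on the one-round contraction of Step 3, which you leave ``to be derived''. The map you do write, $\eta\mapsto\eta^{(1+\alpha)/2}\cdot\mathrm{poly}$, is not what your own balance gives; with polylogarithmic factors it would even increase $\eta$, since $(1+\alpha)/2<1$.

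In the paper the window is data-driven. $R(i,j)$ consists of the $C_1$ highest $\sigma_1$-ranked vertices of $V_1\cap(N(i)\cup N(j))$. It contains every neighbour of $i$ or $j$ in $I_{true}=[r(U_j)-d_1,r(U_j)]$ (Lemma \ref{SignalBelongstoR}), and it lies to the right of $U_j$ (proof of Lemma \ref{UperBoundNoise}). That second fact is what lets the Robinson property make the noise one-sided. Assumption \ref{DecayRateAssume}(3) then gives signal from the whole window of width $d_1$, namely $np_1\int_0^{d_1}\bigl(x^\alpha-(x-d_2)_+^\alpha\bigr)\,dx\asymp np_1d_2d_1^{\alpha}$. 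This is compared against the fluctuation scale $C_2\asymp(np_1)^{1/2}d_1^{(1+\alpha)/2}$ of such a window, giving $d_2\approx(np_1)^{-1/2}d_1^{(1-\alpha)/2}$. Its fixed point is $n^{-1/(1+\alpha)}$, the exponent contracts by the factor $(1-\alpha)/2$ per round, and the subsample sizes $p_i=n^{-(k-i)\beta}$ keep the loss at $n^{O(\epsilon)}$. Your gap-only signal can be salvaged: a boundary gap of order $\delta$ on one side follows from Assumption \ref{U-r-relationship}. But the correct balance in a window of width $\eta$ is $n\delta^{1+\alpha}\gtrsim\sqrt{n\eta^{1+\alpha}}\log n$, i.e.\ $\delta\approx n^{-1/(2(1+\alpha))}\eta^{1/2}$. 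This must be derived, not postulated.

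A second gap is the placement of the reference vertices. Each round orders only the new vertices $V_{i+1}\setminus V_i$ against $V_i$. The reference vertices themselves (in the last round, all of $V_{k-1}$) stay at the old precision. Since $\mathfrak{D}$ is a maximum over all vertices, the final bound fails unless they are re-placed. The paper does this in the second half of Algorithm \ref{alg:erss} (Lemma \ref{LemmaExtCorrect}), comparing old vertices through the extreme new ranks $t(i),b(i)$ of their neighbours, at the cost of a $\log(n)^2$ factor. This is where Assumption \ref{U-r-relationship} is really used: it guarantees that $r$ or $\ell$ moves by at least $\tfrac{B}{2}|U_j-U_i|$, not that boundary locations are ``consistent across positions''.

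Finally, you need not, and cannot simply assert, a spectral start at error $n^{-1/2+\epsilon}$. The theorem assumes only a coarse ordering of precision $n^{-\gamma}$ for some $\gamma>0$, and the first round already brings it near $n^{-1/2}$.
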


This improves upon the usual ``statistical" rate of $\frac{1}{\sqrt{n}}$ for all $0 < \alpha < 1$, matching that statistical rate at $\alpha = 1$ and matching the rate in \cite{janssen2022reconstruction} when $\alpha = 0$. 

\subsection*{Graphon Estimation}



 In the general \textit{graphon estimation} problem, we consider a graph $G$ that is sampled from a graphon $w$, then find an estimate $\tilde{w}$ of the full graphon $w$. For samples of a Robinson graphon, one can ``easily" use an estimated vertex ordering $\sigma$ to define a ``reasonable" estimator. For example, fixing a bandwidth parameter $b \in \mathbb{N}$, define the ``local averaging" estimator:

\be \label{EqWHatNaive}
\tilde{w}_{naive}(x,y) = \frac{1}{(2b+1)^{2}} \sum_{i\in B(x)} \sum_{j\in B(y)} \textbf{1}_{(i,j) \in E},
\ee
where $B(x)=\{k: \lfloor nx\rfloor-b\leq \sigma(k) \leq \lfloor nx\rfloor+b\}$. 

Algorithm~\ref{alg:graphon-estimator} first uses Algorithm \ref{alg:eras} to obtain an estimated ordering, and then uses an estimator similar to Equation \eqref{EqWHatNaive} to estimate the graphon at the latent positions. 
The theorem below is an informal version of Theorem \ref{ourTheorem} in Section 4. 

\begin{theorem}


Suppose that $G$ is a graph of size $n$ sampled from a graphon that satisfies Assumptions \ref{DecayRateAssume}, \ref{U-r-relationship} and \ref{assumption:Holder}.  Suppose that Algorithm \ref{alg:graphon-estimator} is executed with input $G$ and suitable parameters, including a fixed $\epsilon > 0$ small
enough. Then with high probability the resulting estimator $\tilde{w}$ satisfies
\[
\frac{1}{n^2}\sum_{i,j\in [n]} (\tilde{w}(U_i, U_j)  - w(U_i,U_j))^2 = O(n^{-\frac{2\alpha}{1+\alpha}+\epsilon}).
\]
\end{theorem}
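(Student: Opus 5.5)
We prove the statement by decomposing the error of $\tilde w$ at the pairs $(U_i,U_j)$ into three parts, all controlled on the high-probability event of the seriation theorem above (Theorem \ref{theorem-eras}):
\begin{itemize}
\item a variance term from averaging Bernoulli edges;
\item a bias term from the graphon's variation over a bandwidth window;
\item an ordering-error term from vertices whose estimated rank $\hat\sigma$ places them in the wrong window.
\end{itemize}
On that event, $\max_i|\hat\sigma(i)-\sigma_{true}(i)|\le \mathfrak{D}=O(n^{\frac{\alpha}{1+\alpha}+\epsilon})$. A standard order-statistics concentration bound gives $|U_i-\sigma_{true}(i)/n|=O(n^{-1/2}\log n)$ uniformly. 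The key point is that we only need ranks, not latent positions, to be accurate, because the estimator is evaluated at $(U_i,U_j)$ through the blocks $B(\cdot)$ indexed by $\hat\sigma$.

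Fix a bandwidth $b=n^{\beta}$ with $\beta$ slightly larger than $\frac{\alpha}{1+\alpha}$, so that $b\gg\mathfrak{D}$.
\begin{enumerate}
\item \emph{Variance.} By Hoeffding and a union bound over the $n^2$ pairs, the block average deviates from its conditional mean by $O(b^{-1}\sqrt{\log n})$. Its square is $O(n^{-2\beta}\log n)$, which is within the target rate.
\item \emph{Bias, away from the boundary.} Consider pairs whose $\hat\sigma$-windows lie at rank distance well inside or well outside $n\rho$, i.e.\ away from the support boundary $d(x,y)=\rho$ by more than $(b+\mathfrak{D})/n$. Every vertex in each window has true position within $O((b+\mathfrak{D})/n)$ of the centre. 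The Hölder condition (Assumption \ref{assumption:Holder}) then bounds the squared bias by $O((b/n)^{2\gamma})$. This is negligible provided the Hölder exponent is not too small relative to the target rate; I expect the assumption is calibrated for exactly this.
\item \emph{Boundary strip.} Pairs within rank distance $O(b+\mathfrak{D})$ of the boundary make up a fraction $O(b/n)$ of all pairs. On this strip, Assumptions \ref{DecayRateAssume} and \ref{U-r-relationship} give $w\le M_1(\text{dist to boundary})^{\alpha}$. Hence both $w$ and $\tilde w$ (up to the variance term) are $O((b/n)^{\alpha})$ there. The strip's contribution is therefore $O\bigl((b/n)\cdot(b/n)^{2\alpha}\bigr)$, which is far smaller than the target rate.
\end{enumerate}

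The hardest step, and the one that produces the exponent $\frac{2\alpha}{1+\alpha}$, is the regime where no smoothness is available: near the diagonal boundary, or wherever the assumptions only bound the jump by $\alpha$-type decay. The relevant scale is not $b$ but the ordering error $\mathfrak{D}/n\approx n^{-\frac{1}{1+\alpha}}$. Misplaced vertices can lie anywhere within $\mathfrak{D}$ ranks of their true slot, so on a region of width $\delta=\mathfrak{D}/n$ the estimator may average $w$-values differing by up to $\delta^{\alpha}$. This region contributes $\delta\cdot\delta^{2\alpha}$ to the error.

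The balancing that actually yields $n^{-\frac{2\alpha}{1+\alpha}}$ is therefore the following. The per-pair squared error $\delta^{2\alpha}=n^{-\frac{2\alpha}{1+\alpha}}$ is incurred on a set of pairs of non-negligible density. This set consists of pairs separated by the boundary after local misordering; \eqref{ExSimpleGraphon} shows the bias $\delta^{\alpha}$ is unavoidable for pairs whose block straddles a region of width $\delta$. My plan is to bound the total error over all pairs by
\[
\delta^{2\alpha}+b^{-2}\log n+(b/n)^{2\gamma},
\]
and then choose $b=n^{\frac{\alpha}{1+\alpha}+\epsilon}$ so that every term is $O(n^{-\frac{2\alpha}{1+\alpha}+\epsilon})$.

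The main obstacle is making the bound of step 3 uniform. To do so, I would first partition the pairs according to $\hat\sigma$-rank distance, then use the monotonicity in \eqref{EqDefGraphonRobinson} to sandwich each block average between the graphon values at the extreme true positions in the block, which differ by at most $O((\mathfrak{D}+b)/n)$ in position. This reduces the problem to a one-dimensional modulus-of-continuity estimate supplied by the two assumptions.
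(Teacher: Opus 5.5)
There is a genuine gap in your variance step, and it comes from analyzing the wrong estimator. You use the sliding-window estimator \eqref{EqWHatNaive} with an ordering $\hat\sigma$ computed from the whole graph, and then apply Hoeffding plus a union bound over the $n^2$ pairs. But the windows are functions of the very edges being averaged. So, conditionally on the windows, those edges are not independent Bernoulli$(w(U_k,U_\ell))$ variables, and a union bound over pairs does not cover data-dependent windows. The data-dependent part of a $b\times b$ window has $O(\mathfrak{D}b)$ entries: the rows and columns of vertices within $\mathfrak{D}$ ranks of the window's edges. Bounding its fluctuation crudely gives an error of order $\mathfrak{D}/b$ per window average. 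Since $b/\mathfrak{D}$ can only be $n^{O(\epsilon)}$ (otherwise the bias $(b/n)^{2\alpha}$ overshoots), the square of this error is far larger than $n^{-\frac{2\alpha}{1+\alpha}}$. The statement concerns Algorithm \ref{alg:graphon-estimator}, whose three-way split exists precisely to remove this problem. The ordering of $V^{(r)}_c$ is obtained from $V^{(r)}$, only edges inside $V^{(r)}_c$ are averaged, and the three estimates are combined via \eqref{tilde-theta}. The paper uses the resulting conditional independence to get $I_1^2,I_2^2=O(|\Delta S|\log(n)^2)$ by Hoeffding. (To make this airtight, take the ordering of $V^{(r)}_c$ to be the Step 10 ordering $\sigma'_2$ of Algorithm \ref{alg:erss}, controlled by Lemma \ref{LemmaStepTenCorrect}, which reads only edges incident to $V^{(r)}$. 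The second loop of Algorithm \ref{alg:erss} does read edges inside $V^{(r)}_c$, through $t(\cdot)$ and $b(\cdot)$.)

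Your final three-term bound has the right shape: it mirrors the paper's split \eqref{main_inequality} into ordering, noise and bias terms. However, you misdiagnose where the exponent comes from, so your step 3 and the ``hardest step'' paragraph address a non-issue. They are also internally inconsistent: a region of width $\delta$ contributing $\delta^{1+2\alpha}$ cannot also carry per-pair error $\delta^{2\alpha}$ on a set of non-negligible density. Assumption \ref{assumption:Holder} is a global H\"older condition with exponent exactly $\alpha$. So there is no regime without smoothness and no boundary strip to treat separately. The squared bias $(b/n)^{2\alpha}$ is not negligible; it is one of the two terms that set the rate. In the paper, $n^{-\frac{2\alpha}{1+\alpha}}$ is the usual bias--variance balance between \eqref{IneqTerm3MainConc} and \eqref{IneqTerm2MainConc} at $m\approx n^{1/(1+\alpha)}$, which would hold even for a perfect ordering. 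Theorem \ref{theorem-eras} is needed only to show that misordering costs no more: latent-position errors $O(\mathfrak{D}/n)=O(n^{-\frac{1}{1+\alpha}+2\delta})$ are fed through H\"older via Lemma \ref{1to1-appendix} and the cases $L_1$--$L_3$.

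Two bookkeeping fixes are also needed. First, you give $b$ and $\mathfrak{D}$ the same exponent while assuming $b\gg\mathfrak{D}$; the paper pairs $\mathfrak{D}=O(n^{\frac{\alpha}{1+\alpha}+2\delta})$ with block size $q\approx n^{\frac{\alpha}{1+\alpha}+3\delta}$. Second, the global bound $|U_i-\sigma_{true}(i)/n|=O(n^{-1/2}\log n)$ is too weak: converting ranks to positions with it gives squared bias of order $n^{-\alpha}\gg n^{-\frac{2\alpha}{1+\alpha}}$. You need the local spacing estimate $(\mathcal{A}_1(1))^c$. Also, since $U_i$ is unobserved, the estimate must be indexed by the blocks containing $i$ and $j$, as in $\tilde\theta_{ij}=\widetilde Q_{\tilde z(i)\tilde z(j)}$, not by plugging $U_i$ into \eqref{EqWHatNaive}.

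With these repairs your core idea works and is actually shorter than the paper's route. Conditionally on the partition, each estimated block spans a latent interval of width $O((q+\mathfrak{D})/n)$. H\"older then bounds $|\bar\theta_{ab}(\tilde z)-\theta_{ij}|$ directly by $O(((q+\mathfrak{D})/n)^{\alpha})$, and Hoeffding over the $m^2$ block pairs bounds the noise by $O(\log(n)/q)$. No comparison to the true-order partition $z^\star$ is needed.
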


Algorithm~\ref{alg:graphon-estimator} is closely related to the naive local-averaging estimator in \eqref{EqWHatNaive}, and Theorem~\ref{ourTheorem} gives an error rate of order $O(n^{-\frac{2 \alpha}{1+\alpha}})$ up to lower-order factors. This error is known to be essentially optimal, as upon suitable rescaling it matches the minimax bounds established in \cite{gao_rog}. While the optimal rate for the graphon estimation problem was established in \cite{gao_rog}, that paper did not give a computationally-tractable algorithm. In contrast, Algorithm~\ref{alg:graphon-estimator} gives an explicit and quick way to compute an estimator that has essentially-optimal convergence rates.

\subsection*{Test Statistic}

Our final result is a computationally tractable test for whether an observed graph was sampled from a ``nice" Robinson graphon. The test uses a computable variant of the $\Gamma$ and $\Lambda$ statistics from \cite{chuangpishit2015linear} and \cite{LambdaDef24}. Theorem~\ref{Thm_Test_Power} controls this statistic under the null hypothesis; informally:
\begin{theorem}
Let $G$ be a graph of size $n$ sampled from a graphon that satisfies Assumptions~\ref{DecayRateAssume} and~\ref{U-r-relationship}.
Let $\hat{\Lambda}(G)$ be calculated as in Algorithm~\ref{alg:graphon-test} with input $G$ and appropriate parameters, including size scale $\mu=n^{-\beta}$, where $0<\beta<1$, and a sufficiently small fixed $\epsilon >0$. Then with high probability,
\begin{align*}
\hat{\Lambda}(G) & = O (n^{-1-\beta}\log(n)^2) + O\left(\min\left\{n^{-2\beta},\, n^{-\beta-\frac{1}{1+\alpha}+\epsilon}\log(n)\right\}\right).
\end{align*}
If Assumption~\ref{assumption:Holder} also holds, then the second term can be replaced by
\[
O\left( n^{-2\beta-\frac{\alpha}{1+\alpha}+\epsilon}\log(n)^\alpha\right).
\]
\end{theorem}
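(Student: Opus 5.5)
We have to work with only the informal statement here: $\hat\Lambda(G)$ is a computable version of the $\Lambda$ statistic of \cite{LambdaDef24}, evaluated on the empirical graphon of $G$ reordered by $\hat\sigma$ from Algorithm~\ref{alg:eras}. It measures, over blocks of size scale $\mu=n^{-\beta}$, the total violation of the Robinson inequality \eqref{EqDefGraphonRobinson}. The plan is to split $\hat\Lambda(G)$ into two parts. The first is a \emph{sampling-noise} part: the statistic evaluated on the true ordering $\sigma_{true}$, compared with the same functional applied to $w$ at the latent positions. The second is an \emph{ordering-error} part: the change in the statistic when $\sigma_{true}$ is replaced by $\hat\sigma$. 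For a Robinson $w$ the population $\Lambda$ is zero. So it suffices to bound these two parts, which we expect to give the two terms in the statement.

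For the noise part, each block average at scale $\mu$ is an average of roughly $(\mu n)^2$ independent Bernoulli variables. Conditional on $U_1,\dots,U_n$, Bernstein's inequality plus a union bound over the polynomially many blocks and block pairs bounds the deviation of each block average by $O(\log n/(\mu n))$. We then need to check how $\Lambda$ combines block deviations. If it is a sum over the $O(\mu^{-2})$ block pairs of positive parts of differences, each weighted by $\mu^2$ times a block of mass $\mu$, the aggregate is $O(\mu\cdot\log(n)^2/(\mu n)\cdot\mu^{-1}\cdot\mu)$. We would tune this bookkeeping to land exactly on $O(n^{-1-\beta}\log(n)^2)$, with the square of the log coming from the union bound together with the fluctuation of the empirical quantiles of the $U_i$ (DKW).

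For the ordering-error part, Theorem~\ref{theorem-eras} gives, with high probability, $\mathfrak{D}\le n^{\frac{\alpha}{1+\alpha}+\epsilon}$. So every block under $\hat\sigma$ differs from the corresponding block under $\sigma_{true}$ only in a boundary strip of $O(\mathfrak{D})$ vertices. There are two ways to bound the effect.
\begin{enumerate}[label=(\roman*)]
\item Trivially, $\Lambda$ is at most the total block mass, which gives $O(\mu^2)=O(n^{-2\beta})$.
\item Alternatively, the swapped entries occupy a fraction $O(\mathfrak{D}/(\mu n))$ of each block, and summing with weights $\mu^2$ over $\mu^{-1}$ relevant diagonals gives $O(\mu\,\mathfrak{D}/n\cdot\log n)=O(n^{-\beta-\frac{1}{1+\alpha}+\epsilon}\log n)$.
\end{enumerate}
Taking the minimum of these two gives the second term. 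Under Assumption~\ref{assumption:Holder}, we would instead use the fact that the swapped vertices have latent positions within $O(\mathfrak{D}\log n/n)$ of their correct locations. By $\alpha$-H\"older continuity, swapping them changes $w$ values by at most $(\mathfrak{D}\log n/n)^{\alpha}$. This replaces the crude $0/1$ swap cost and yields $O(\mu^2 (n^{-\frac{1}{1+\alpha}+\epsilon}\log n)^\alpha)$, which matches $n^{-2\beta-\frac{\alpha}{1+\alpha}+\epsilon}\log(n)^\alpha$ after absorbing $\epsilon$.

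We expect the main obstacle to be the ordering-error step. It requires a Lipschitz-type stability estimate for $\hat\Lambda$ under permutations that move each index by at most $\mathfrak{D}$, uniformly over all block triples in the supremum or sum. We would first try to prove a deterministic lemma: for two orderings with $\max_i|\sigma(i)-\sigma'(i)|\le \mathfrak{D}$, the corresponding block-average matrices differ entrywise by at most $O(\mathfrak{D}/(\mu n))$ times the oscillation of the adjacency entries near the block boundary. The Bernoulli noise inside the boundary strips would then be handled by the same concentration bound as in the noise part. A further point to check is that the conclusion of Theorem~\ref{theorem-eras}, obtained after the wlog alignment with $\sigma_{true}$ rather than $\sigma_{rev}$, is compatible with $\hat\Lambda$. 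We expect this to hold because $\hat\Lambda$ should be invariant under reversal.
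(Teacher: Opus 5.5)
Your proposal has the right ingredients: concentration, the Robinson inequality at the true order, and an ordering-error correction that is either crude or H\"older. Your crude correction is sound once applied triple by triple. Replacing each $\hat\sigma$-interval $Z$ by the true-rank interval $Z^\star$ with the same rank range changes each empirical count $e(X,Y)=\sum_{x\in X,y\in Y}A_{xy}$ by at most $s(|X\triangle X^\star|+|Y\triangle Y^\star|)=O(s\min\{s,\mathfrak{D}\})$, deterministically. So for \eqref{IneqCompTestStat} your order of operations (compare first, then concentrate on intervals fixed by the latent positions) needs no independence between $\hat\sigma$ and the edges. The paper instead concentrates directly on the $\sigma$-intervals.

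There are two genuine gaps. First, the sampling term is never derived. With $N=|V^{(2)}|$, the statistic of Algorithm~\ref{alg:graphon-test} is a \emph{maximum} over triples of $\sigma$-intervals $A<B<C$ of common size $s\le\mu N$ of $N^{-2}(e(A,C)-e(B,C))$ and of $N^{-2}(e(A,C)-e(A,B))$. It is not a weighted sum of positive parts over $\mu^{-2}$ block pairs. Your own bookkeeping gives $\mu\cdot\frac{\log(n)^2}{\mu n}\cdot\mu^{-1}\cdot\mu=\frac{\log(n)^2}{n}$, which overshoots the claim by a factor $n^{\beta}$, and ``tuning'' it is not an argument. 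What is needed is a per-triple bound. Bernstein plus a union bound over the $O(n^4)$ interval pairs gives $|e(X,Y)-W(X,Y)|\le\sqrt{W(X,Y)}\log n+\log(n)^2=O(s\log(n)^2)$ w.e.p.; dividing by $N^2$ with $s\le\mu N$ gives $O(n^{-1-\beta}\log(n)^2)$. The $\log(n)^2$ is the Bernstein threshold needed for w.e.p., not a DKW effect. The Robinson input needed is not $\Lambda(w)=0$ but the finite-sample inequality $W(A^\star,C^\star)\le W(B^\star,C^\star)$ for equal-size true-order intervals. It follows by pairing $A^\star$ with $B^\star$ and applying \eqref{EqDefGraphonRobinson} termwise.

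Second, the H\"older refinement does not go through as written. H\"older continuity controls only conditional means. The empirical counts over the swapped strips $(Z\triangle Z^\star)\times Y$ still carry Bernoulli noise, and these strips are determined by $\hat\sigma$, which is computed from the same graph. The concentration bound from your noise step was for sets fixed given the latent positions, so it does not transfer. A union bound over all sets the strips could be is exponentially large in $\min\{\mathfrak{D},s\}$ and controls the noise only at level roughly $\min\{\mathfrak{D},s\}\sqrt{s}$. At $s=\mu N$ with $\beta$ close to $1$, this is much larger than both $s\log(n)^2$ and $s^2\rho_N^\alpha$, where $\rho_N=N^{-\frac{1}{1+\alpha}+2\delta}\log N$.

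The paper's remedy is sample splitting. Algorithm~\ref{alg:graphon-test} builds $\sigma$ on $V^{(2)}$ from a training half $V^{(1)}$ via Algorithms~\ref{alg:eras} and~\ref{alg:erss}, and evaluates the statistic only on edges inside $V^{(2)}$. After conditioning on $\mathcal{H}$, the $\sigma$-intervals are fixed and Lemma~\ref{A6-lemma} gives concentration directly on them. The ordering error then enters only through $|W(X,Y)-W(X^\star,Y^\star)|$. This is $O(s\min\{s,\mathfrak{D}\})$ crudely, and $O(s^2\rho_N^\alpha)$ via rank matching, empirical spacing and Assumption~\ref{assumption:Holder}.
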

The first term is, roughly, the sampling fluctuation, and the second is, roughly, the ordering error.

\subsection{Related Work} \label{SecRelWork}

The problem of extracting orderings or rankings from pairwise (dis-)similarities appears in many contexts in the machine learning and statistics literatures. In some cases, the details of the underlying data-generating process are quite similar to those in the current paper; in most, they are not. Nonetheless, we view many such papers as working on essentially the ``same" problem, and we expect to be able to move algorithms and ideas between contexts.

Besides \cite{janssen2022reconstruction}, the most-similar papers on seriation that we are aware of are \cite{rocha2018recovering,fogel2014serialrank,natik2021consistencyspectralseriation,cai2023matrix,Flammarion2016OptimalRO,giraud2023mini,berenfeld2024seriationtoeplitzlatentposition,issartel2025minimaxoptimalseriationpolynomial}. While these papers are not all written in the language of ``Robinson graphons," all of them study distributions on similarity matrices that could be rewritten in the form of  Equation \eqref{eq:graphon_sampling} for some graphon satisfying Inequalities \eqref{EqDefGraphonRobinson} (though some study only a restricted class of such random similarity matrices, while others study much broader classes of similarity matrices that are not necessarily $\{0,1\}$-valued). The largest difference between our work and these works comes from the final advertised rates: our main results (and those in \cite{janssen2022reconstruction}) give rates that are faster than the statistical rate of $\frac{1}{\sqrt{n}}$, while all other papers give rates that are similar to or slower than that rate.

We now describe the papers in somewhat more detail. The first papers, \cite{rocha2018recovering,fogel2014serialrank,natik2021consistencyspectralseriation,cai2023matrix}, study essentially the same ``ordering" problem as we study. The first two consider only special cases corresponding roughly to the one-parameter graphon $w(x,y) = p \, \mathbf{1}_{|x-y|\leq 0.5}$ (where $p \in [0,1]$) and the single graphon $w(x,y) = 1- |x-y|$, while the latter two cover nonparametric families of graphons that are similar to those studied in the present paper. None of these papers prove optimality of their results (though one can easily verify that the main bound in \cite{janssen2022reconstruction} is optimal up to polylog factors).

The latter papers, \cite{Flammarion2016OptimalRO,giraud2023mini,berenfeld2024seriationtoeplitzlatentposition,issartel2025minimaxoptimalseriationpolynomial}, are both more general and prove stronger results. Between them they cover a large nonparametric family of graphons (and some seriation problems that do not correspond to graphons at all), and they show that the optimal rate of convergence for an ordering-like problem under their assumptions is $O( \sqrt{\log(n)/n} )$.  We highlight one result that might be surprising: the main results  of \cite{giraud2023mini,issartel2025minimaxoptimalseriationpolynomial} say that the \textit{optimal} rate of convergence for the ordering problem is roughly $n^{-0.5}$ under certain conditions, while our Theorem \ref{theorem-eras} says that a much faster rate of $n^{-c}$ for $0.5 < c < 1$ is possible. This is not a contradiction. The main results of \cite{giraud2023mini,issartel2025minimaxoptimalseriationpolynomial} cover a very broad class of graphons, and their lower bound only says that \textit{some} problems in that class are difficult. Our results divide this large class of graphons into many much smaller classes, according to smoothness conditions similar to those in \cite{gao_rog}, and so we are able to find classes of graphons for which convergence is much faster.

Many statisticians have studied re-ordering problems that are similar in spirit to ours but not formally equivalent. Some prominent examples include the recovery of ordering for matrices with the Monge property  \cite{hutter20} (rather than the Robinson property, as in this paper), or the recovery of ordering based on noisy pairwise estimates of which entry is higher in the ordering \cite{chaterjee15} (rather than noisy pairwise estimates of similarity, as in this paper). Broadly speaking, we expect many techniques to transfer between these problems, though it might not be possible to give formal bounds relating solutions to the different problems.

In Section \ref{SecGraphEst} of this paper, we use the output of our main seriation algorithm as input to ``downstream" algorithms for graphon estimation and hypothesis testing. Seriation is also used as an initial step in various other algorithms, such as community detection \cite{Zendehboodi_2023}.

There are large literatures on estimating graphons and doing hypothesis tests on graphons. For representative strong results in those areas, we refer the reader to \cite{gao_rog} for graphon estimation and \cite{testing08} for property testing. There are also literatures on these subjects for very specific families of graphons. As representative examples, \cite{klus2025learninggraphonsdatarandom} studies graphon estimation for random walk data obtained from a stochastic block model graphon, while \cite{LambdaDef24} studies hypothesis testing for Robinson graphons. The most similar papers to ours are \cite{gao_rog} for graphon estimation and \cite{LambdaDef24} for hypothesis testing. In both areas, our current work has two main differences to previous work:

\begin{enumerate}
    \item Our algorithm is explicit and computationally tractable, and
    \item Our computational advantages rely very strongly on our good estimates of the underlying vertex ordering.
\end{enumerate}

In the case of graphon estimation, our guarantees essentially match the guarantees of \cite{gao_rog} upon appropriate rescaling. In the case of property testing, we improve upon the rates of \cite{LambdaDef24} at the cost of making stronger assumptions.

\subsection{Guide to Paper}
We give notation and state our main assumptions in Section \ref{SecNotation}, describe and analyze our main algorithm for vertex ordering in Section \ref{section-error-rooting}, and describe and analyze applications to graphon estimation and hypothesis testing in Section \ref{SecApplications}.

\section{Notation} \label{SecNotation}

\subsection{General Notation}

We now define a notion of error in the ordering that depends directly on the positions $U_i$ rather than on the permutation derived from these values and known as $\sigma_{true}$.  

\begin{defn}\label{Def-precision-level}
Given $S \subseteq  V$, a permutation $\sigma$ of $S$, and a permutation $\tau$ of $V$, we say that 
$\sigma$ \emph{agrees with} $\tau$ to \emph{precision level} $d$, if, for all $i,j \in S$ such that 
$|U_i - U_j| \ge d$, we have $\sigma(i) < \sigma(j)$ if and only if $\tau(i) < \tau(j)$.
If $\sigma$ agrees with $\sigma_{true}$ (or $\sigma_{rev}$) to precision level $d$, we will say that $\sigma$ has precision level $d$.
\end{defn}

We say that a sequence of events $\mathcal{A} = \{\mathcal{A}^{(n)}\}$ indexed by $n \in \mathbb{N}$ hold \emph{with extreme probability} if
\[
\mathbb{P}\left[\mathcal{A}^{(n)}\right] \ge 1 - n^{-c_1 \log(n)^{c_2}}
\]
for some $c_1, c_2 > 0$ and all $n > N_0$ sufficiently large. We use the shorthand $\mathcal{A}$ holds w.e.p.

When $f,g$ are two nonnegative functions with the same domain, we write $f = O(g)$ as the shorthand for: there exists a positive constant $C<\infty$ such that $ f(x) \leq C g(x)$ for all $x$. Similarly, we write $g = \Omega(f)$ if $f = O(g)$, and we write $f=\Theta(g)$ if both $f = O(g)$ and $g = O(f)$. 

\subsection{Main Assumptions}

For this section, fix a Robinson graphon $w$.

\begin{defn} [Domain]
Define $r(x) = \sup\{y \, : \, w(x,y) > 0\}$ and $\ell(x) = \inf\{ y \, : \, w(x,y) > 0\}$.
\end{defn}

\begin{assumption} [Decay Rate]\label{DecayRateAssume}
There exist constants $0<\alpha<1$, $0< \rho  <0.5$ and $1\leq M_{0}, \, M_1<\infty$ such that: 
\begin{enumerate}
\item For all $x \in [0,1- \rho]$, we have $r(x)-x\geq \rho$ and for $z \in [0, r(x)-x]$
    \be
     \frac{w(x,r(x) - z)}{z^{\alpha}} \in [M_{0}^{-1},M_{0}].\nonumber 
    \ee 
\item For all $x \in [ \rho ,1]$, we have $x-\ell(x)\geq  \rho $ and for $z \in [0,x-\ell(x)]$
    \be
    \frac{w(x,\ell(x) + z)}{z^{\alpha}} \in [M_{0}^{-1},M_{0}].\nonumber 
    \ee 
\item Fix $0<d<1- \rho $. For all $x,y \in [0,1- \rho]$ with $x-y=d$ and for $z\in [0,r(x)-x]$,  
    \be
    \frac{w(x,r(x) - z)- w(y,r(x)-z)}{z^{\alpha}- \max\{0,(z-d)^{\alpha}\}}\in [M_1^{-1}, M_1].\nonumber 
    \ee 
\item Fix $0<d<1- \rho$. For all $x,y \in [ \rho,1]$ with $x-y=d$ and for $z \in [0, y-\ell(y)] $
    \be
    \frac{w(y,\ell(y)+z)- w(x,\ell(y) + z)}{z^{\alpha}- \max\{0,(z-d)^{\alpha}\}}\in [M_1^{-1}, M_1].\nonumber 
    \ee 
\end{enumerate}
\end{assumption}

In informal discussions, we refer to the constant $\alpha$ of a graphon satisfying Assumption \ref{DecayRateAssume} as the \textit{decay rate}.

\begin{assumption}[Propagation of Differences to Boundaries] \label{U-r-relationship}
There exists some $B > 0$ so that for all $0 < x < y < 1$,
\[
(r(y)-r(x))+(\ell(y)-\ell(x))  \geq B|x - y|.
\]
\end{assumption}

\section{Ordering Algorithm}\label{section-error-rooting}

We present and analyze our main ordering algorithm, Algorithm \ref{alg:eras}. 
The algorithm is presented in Section \ref{SubsecOrdAlg}, a single round is analyzed in Section \ref{SubsecOrdOnce}, and the full algorithm is analyzed in Section \ref{SubsecOrdMany}. Throughout this section, we fix a graphon $w$ that satisfies Assumptions \ref{DecayRateAssume} and \ref{U-r-relationship} and consider graphs of size $n$ sampled from that graphon.

As in \cite{janssen2022reconstruction}, the main idea is to fix a growing sequence of sets of vertices $V_{1} \subsetneq V_{2} \subsetneq \ldots \subsetneq V_{k} = V$ and, at each round $j \geq 1$, use an approximate ordering of $V_{j}$ to get a refined ordering of $V_{j+1}$. Critically, the estimate of the ordering of $V_{j+1}$ depends only on (i) edges between pairs of vertices in $V_{j}$ and (ii) edges between $V_{j}$ and $V_{j+1} \backslash V_{j}$ (that is, it ignores edges between pairs of vertices in $V_{j+1} \backslash V_{j}$ that are being embedded in the same round). This allows us to ensure that the random variables used in different stages of the algorithm are independent, simplifying the analysis.

To generate the nested sequence of vertex sets, we introduce i.i.d. random variables $\{B_i\}_{i=1}^n \stackrel{i.i.d.}{\sim} \mathrm{Unif}([0,1])$. For a given sequence of parameters $0<p_1<p_2<\cdots<p_k=1$ (to be introduced later), we define 
\[
V_i=\{j\in V: B_j \leq p_i\}, \quad 1\leq i \leq k.
\]
This randomization provides a systematic way to determine the sequence in which vertices are progressively included.

\subsection{Main Ordering Algorithm} \label{SubsecOrdAlg}

We begin with a heuristic explanation for our main building block, Algorithm \ref{alg:erss}, and introduce the notation necessary for the algorithm as we go.

Our main heuristic, based on Assumption \ref{DecayRateAssume}, is as follows: if $U_i<U_j$, then there is a region $R\subset [0,1]$ such that, for each vertex $k$ with $U_{k} \in R$, the probability $w(U_{i},U_{k})$ that $i$ is a neighbor of $k$ is much smaller than the probability $w(U_{j},U_{k})$ that $j$ is a neighbor of $k$. If $U_i$ and $U_j$ are sufficiently far apart, then $R$ is large and will contain many vertices; these will provide the signal that indicates the true ordering of $i$ and $j$.
Figure~\ref{fig:R-Region} provides a toy illustration of this heuristic for the family of graphons in \eqref{ExSimpleGraphon}.
Although we do not know exactly which vertices lie in $R$, we know that the vertices that provide the signal must be at the extremes of the true ordering in the neighborhood of $i$ or $j$ - they must be close to $r(U_{i}), r(U_{j}), \ell(U_{i}),$ or $\ell(U_{j})$. The ordering from the previous round can therefore be used to estimate which vertices are in $R$. 

\begin{figure}[h]
    \centering
    \includegraphics[width=0.7\textwidth]{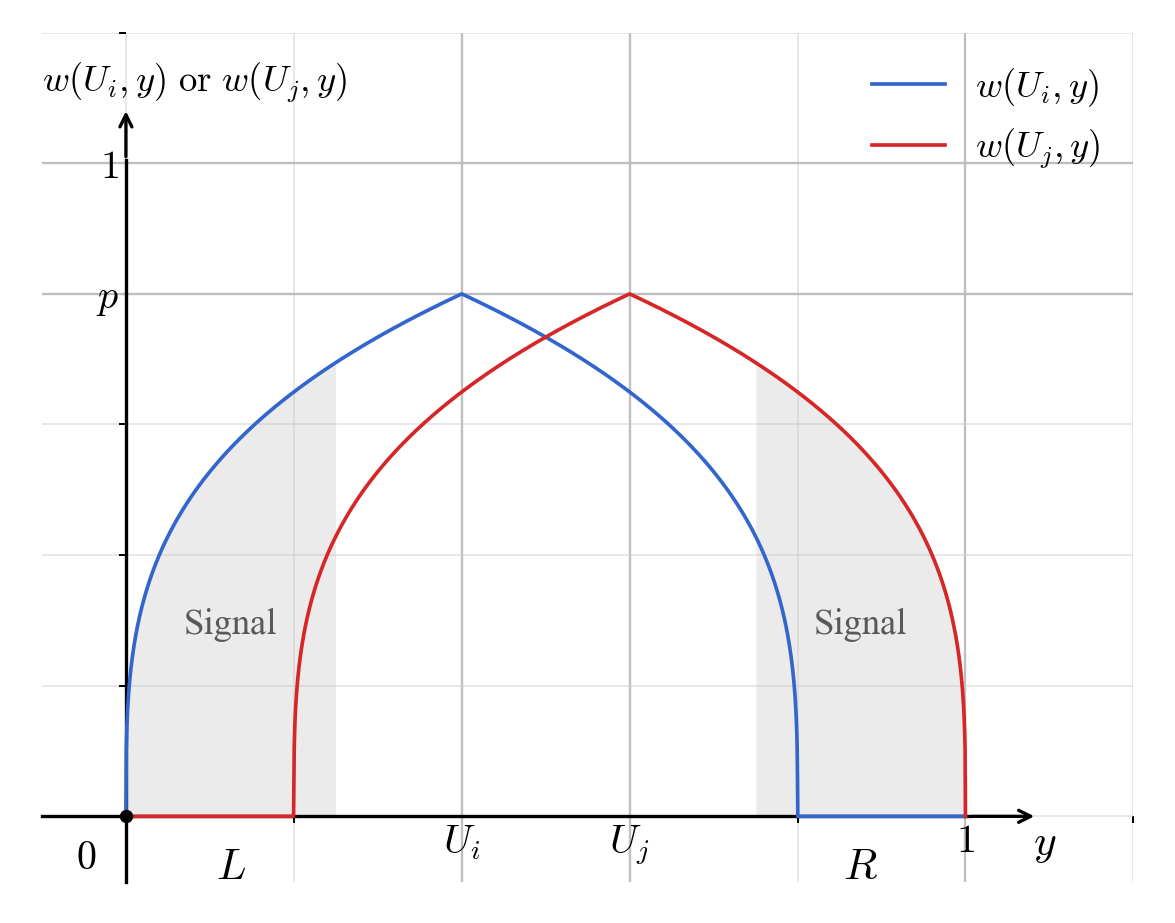}
    \caption{Illustration of ordering signals from extreme neighborhoods for the family of graphons in \eqref{ExSimpleGraphon}.}
    \label{fig:R-Region}
\end{figure}

To make this precise and give our algorithm, we introduce some definitions. Given an ordering $\sigma$ of a set $V$, a set $S \subset V$ , and a parameter $c < |S|$, we define the sets of the $c$ elements of $S$ with the highest and lowest rank according to $\sigma$:
\be \label{EqRLDef}
R(S, \sigma, c) &= \{k : |\{p \in S : \sigma(k) \leq \sigma(p)\}| \leq c\}, \\ 
L(S, \sigma, c) &= \{k : |\{p \in S : \sigma(p) \leq \sigma(k)\}| \leq c\}.
\ee
Given a permutation $\gamma$ on $S$, define
\[
F_{\gamma}(i,j)=\left\{
\begin{array}{ll}
1 & \mbox{if $\gamma(i)<\gamma(j)$,}\\
0 & \mbox{if $\gamma(i)=\gamma(j)$,}\\
-1 & \mbox{if $\gamma(i)>\gamma(j)$.}
\end{array}
\right.
\]
For a comparison function $F:S^2\to\{-1,0,1\}$, set
\[
\gamma_F(i)=\sum_{j\in S} F(i,j),
\]
\[
\sigma_F(i)=1+|\{j\in S: \gamma_F(j)<\gamma_F(i)\mbox{ or ($\gamma_F(j)=\gamma_F(i)$ and $j<i$)}\}|.
\]
Thus larger values of $\gamma_F$ are ranked later.

\begin{remark}
    In our algorithm, $F$ need not come from a permutation. For example, $F(i,j)=0$ may hold even when $i\not= j$. The tie-breaker $j< i$ ensures that $\sigma_F$ is still a permutation.
\end{remark}

With this notation, we can present our ``single round" algorithm as Algorithm \ref{alg:erss} below. We define $SingleStage(G_2,V_1,\sigma_1)$ to be the output produced by Algorithm \ref{alg:erss} given the graph $G_2=G[V_2]$ (corresponding to $V_2$), $V_1$ and $\sigma_1$.

\begin{algorithm}[h]
\caption{Error-Rooting: Single Stage}\label{alg:erss}
\begin{algorithmic}[1]
\algrenewcommand\alglinenumber[1]{} 
\State \textbf{parameter:} $C_1$, $C_2$, $C_3$.
\State \textbf{input:} Observed graph $G=(V,E)$, sets $V_1 \subset V_2 \subset V$, an order $\sigma_1$ on $V_1$.
\State \textbf{output:} An order $\sigma_2$ on $V_2$.

\algrenewcommand\alglinenumber[1]{\scriptsize #1}
\setcounter{ALG@line}{0}
\State Initialize $F(i,j)=0$ for all ordered pairs $(i,j)\in V_2^2$.
\For{$i<j$ in $V_2 \setminus V_1$}
\State Set
\[
R=R(i,j)=R(V_1 \cap (N(i)\cup N(j)),\sigma_1,C_1)
\]
\[
L=L(i,j)=L(V_1 \cap (N(i)\cup N(j)),\sigma_1,C_1)
\]
according to Equation \eqref{EqRLDef}.
\If{$|N(j)\cap R|-|N(i)\cap R|>C_2$ or $|N(i)\cap L|-|N(j)\cap L|>C_2$}
\State Set $F(i,j)=1$ and $F(j,i)=-1$.
\ElsIf{$|N(i)\cap R|-|N(j)\cap R|>C_2$ or $|N(j)\cap L|-|N(i)\cap L|>C_2$}
\State Set $F(i,j)=-1$ and $F(j,i)=1$.
\EndIf
\EndFor
\State Set $\sigma'_2 =\sigma_{F}$ on $V_2\setminus V_1$, using the restriction of $F$ to $(V_2\setminus V_1)^2$.
\For{$i$ in $V_2$}
\State  Set
\[
t(i)=\max\{\sigma'_2(k): k \in  N(i)\cap (V_2 \setminus V_1)\},
\]
\[
b(i)=\min\{\sigma'_2(k): k \in  N(i)\cap (V_2 \setminus V_1)\}.
\]
\EndFor
\For{$i<j$ in $V_2$ with $i\in V_1$ or $j\in V_1$}
\If{$t(j)-t(i)>C_3$ or $b(j)-b(i)>C_3$}
\State Set $F(i,j)=1$ and $F(j,i)=-1$.
\ElsIf{$t(i)-t(j)>C_3$ or $b(i)-b(j)>C_3$}
\State Set $F(i,j)=-1$ and $F(j,i)=1$.
\EndIf
\EndFor
\State Return $\sigma_2=\sigma_{F}$ on $V_2$.
\end{algorithmic}
\end{algorithm}

To achieve high accuracy, we iteratively apply this algorithm, choosing the sizes of the growing sequence of subsets carefully. This does require a separate initialization algorithm that  takes as input a graph and outputs a (possibly quite inaccurate) initial ordering. In Algorithm \ref{alg:eras} we just describe this as a ``coarse algorithm", and we assume this algorithm returns an ordering with error that converges at rate $n^{-\gamma}$, where $\gamma>0$ is called the \emph{precision parameter}. We will see in the statement of Theorem \ref{theorem-eras} that we don't require this coarse algorithm to be very accurate - convergence at rate $n^{-\gamma}$ for \textit{any} $\gamma > 0$ is sufficient, even if $\gamma \ll 0.5$ is much worse than the statistical rate. In particular, the coarse algorithm in \cite{janssen2022reconstruction} is sufficient for our purposes (when its assumptions hold), and we suspect essentially any of the algorithms described in Section \ref{SecRelWork} would also be sufficient.

The complete multi-stage procedure is detailed in Algorithm \ref{alg:eras}:

\begin{algorithm}[h]
\caption{Error-Rooting: All Stages}\label{alg:eras}
\begin{algorithmic}[1]
\algrenewcommand\alglinenumber[1]{} 
\State \textbf{parameters:} Number of rounds $k$, sizes $0<p_1<p_2<\cdots<p_k=1$, target errors $d_1>d_2>\cdots>d_k$, decay rate $\alpha \in [0,1]$, coarse algorithm $P$ with precision parameter $\gamma$.
\State \textbf{input:} Observed graph $G=(V,E)$.
\State \textbf{output:} A permutation $\sigma$ on $V$.

\algrenewcommand\alglinenumber[1]{\scriptsize #1}
\setcounter{ALG@line}{0}
\State Let $\{B_i\}_{i=1}^n \overset{i.i.d.}{\sim} \mathrm{Unif}[0,1]$.
\State Run the coarse algorithm $P$ on the induced subgraph $G_1$ of $G$ with vertex set
$V_1 = \{j \in V : B_j \leq p_1\}$ and $n_1 = |V_1|$ to get the rough ordering $\sigma_1$ of $V_1$.
\For{$i$ in $\{1,2,\ldots, k-1\}$}
\State Let $G_{i+1}$ be the induced subgraph of $G$ with vertex set $V_{i+1} =\{j \in V : B_j \leq p_{i+1}\}$.
\State Run Algorithm \ref{alg:erss} with parameters 
$$C_1=\left\lceil np_i d_i\log(n) \right\rceil, $$
$$C_2= \left\lfloor \frac{1}{6M_1} (n p_i)^{1/2}(d_i)^{(1+\alpha)/2} \right\rfloor,$$
$$C_3=\left\lfloor \frac{1}{2(1+\alpha)}(n p_i)^{(1-\alpha)/2}  (d_i)^{(1-\alpha^2)/2} (\log (n))^{1+\alpha}\right\rfloor.$$
Let $\sigma_{i+1}=SingleStage(G_{i+1}, V_i, \sigma_i)$ be the result.
\EndFor
\State Return $\sigma=\sigma_k$.
\end{algorithmic}
\end{algorithm}

\subsection{One-Step Analysis} \label{SubsecOrdOnce}

We now analyze Algorithm \ref{alg:erss}. In the next section, we will see how to choose the parameters $C_1,C_2,C_3$ so that the properties described in this section hold asymptotically almost surely.

Algorithm \ref{alg:erss} builds a new ordering $\sigma_2$ on $V_2$ by looking at the edges in $G_2$ and the old ordering $\sigma_1$ on $V_1 \subset V_2$. 
To start the heuristic analysis, we assume that $\sigma_1$ is an ordering of $V_1$ which agrees with $\sigma_{true}$ at precision level $d_1$. We now show how $\sigma_1$ is used in Algorithm \ref{alg:erss} to find an ordering $\sigma_2$ of $V_2 \backslash V_{1}$ which agrees with $\sigma_{true}$ at smaller precision level $d_2$.

Fix $i,j\in V_{2}\setminus V_{1}$ so that $|U_{i}-U_{j}|>d_{2}$. Assume without loss of generality that $U_{i}<U_{j}$. Let 
\be \label{Itrue}
I_{true}(i,j)=[r(U_{j})-d_1,r(U_{j})].
\ee
Recall that $U_j$ cannot have any neighbors greater than $r(U_j)$. Therefore, the neighbors of $U_j$ in $I_{true}$ will be those with highest value in $\sigma_{true}$. Because of the distinction between $w(U_i,x)$ and $w(U_j,x)$ presented in Assumption \ref{DecayRateAssume} and \ref{U-r-relationship}, the region $I_{true}$ is very informative in distinguishing the ordering of $i,j$. The idea behind the algorithm is that the set $R(i,j)$ (as defined in line 3 of Algorithm \ref{alg:erss}) is a small set that includes most of the ``signal'' vertices in $I_{true}$ and as few additional ``noisy'' vertices as possible; when this works, we expect the vertex with larger value $U_j$ to have significantly more neighbors in $R(i,j)$. 

We now turn this heuristic into a proof for appropriate hyperparameters $C_{1},C_{2},C_{3},p_{1},p_{2}$, with estimates deferred to Section \ref{SubsecDeferredLemmas}:

\begin{itemize}
\item \textbf{Large Signal:} Denote by
\[
Dist(i,j)=\{k\in V_1: U_k\in I_{true}(i,j)\}
\]
the set of ``signal" vertices that strongly distinguish between $i$ and $j$ on the right.
We show in Lemma \ref{DistLarge} below that w.e.p. $|Dist(i,j)|>n p_1 d_1 \log(n)^{-1}$. We define
\be \label{EqDefSignal}
Signal(i,j)= |R(i,j)\cap Dist(i,j)\cap N(j)|- |R(i,j)\cap Dist(i,j)\cap N(i)|.
\ee 
We will show in Lemma \ref{SignalBelongstoR} that w.e.p.
\[
Dist(i,j)\cap ( N(j) \cup N(i)) \subseteq R(i,j),
\]
and hence
\begin{align*}
Signal(i,j) &= |Dist(i,j)\cap N(j)|-|Dist(i,j)\cap N(i)|.
\end{align*}
Moreover, we show in Lemma \ref{LowerBoundSignal} that, for appropriately chosen $C_2$, w.e.p.
\[
Signal(i,j) \geq 2C_2.
\]

\item \textbf{Small Noise:} We define
\be \label{EqDefNoise}
Noise(i,j)=|R(i,j)\cap N(i)\setminus Dist(i,j)|-|R(i,j)\cap N(j) \setminus Dist(i,j)|.
\ee
Then it is easy to see that
\be\label{Signal-Noise}
|R(i,j)\cap N(j)|-|R(i,j)\cap N(i)|=Signal(i,j)-Noise(i,j).
\ee
We will show in Lemma \ref{UperBoundNoise} below that w.e.p., $Noise(i,j)<C_2$.
\end{itemize}

Putting together the above estimates, we will see that, in this situation, $|R(i,j)\cap N(j)|-|R(i,j)\cap N(i)| > C_{2}$ w.e.p., and so in particular $F(i,j)$ is set to the correct value in Steps 4-8 of Algorithm \ref{alg:erss}. This is enough to show that the ordering of $V_{2} \backslash V_{1}$ has high accuracy. The analysis of the extension to all of $V_{2}$ is essentially independent of the above heuristic, and is analyzed in Lemma \ref{LemmaExtCorrect}.

\subsubsection{Deferred Estimates} \label{SubsecDeferredLemmas}

We now prove the deferred estimates. In this section, we assume that $p_{1},p_{2},d_{1},d_{2},C_{1},C_{2},C_{3}$, all depending on the total graph size $n$, satisfy the following equalities and inequalities: 
\be
\label{parameter}  \log(n)^{-3} \leq \ p_1 < p_2 &\leq 1  \\
\nonumber n^{-1/(1+\alpha)}< d_1 & \leq \log(n)^{-2},\\
\nonumber d_2 &= (n p_1)^{-\frac{1}{2}}(d_1)^{\frac{1-\alpha}{2}},\\
\nonumber C_1 &= \left\lceil np_1 d_1\log(n) \right\rceil,\\
\nonumber C_2 &= \left\lfloor \frac{1}{6M_1} (n p_1)^{\frac{1}{2}}(d_1)^{\frac{1+\alpha}{2}} \right\rfloor,\\
\nonumber C_3 &= \left\lfloor \frac{1}{2(1+\alpha)}(n p_1)^{\frac{1-\alpha}{2}}  (d_1)^{\frac{1-\alpha^2}{2}} (\log (n))^{1+\alpha}\right\rfloor,
\ee
where $\alpha$, the decay rate, and $M_1$ are as in Assumption \ref{DecayRateAssume}.

We also assume that $\sigma_1$ is an ordering of $V_1$ that has precision level $d_{1}$, which was estimated using only edges internal to $V_{1}$.

We next show that, with input and parameters as given in Equation \eqref{parameter}, Algorithm \ref{alg:erss} does indeed produce an ordering $\sigma_2$ which has precision level $d_2$. Recall that $\{B_i\}_{i=1}^n \stackrel{i.i.d.}{\sim} \mathrm{Unif}([0,1])$.  For a given $p$, let $S=S(p)=\{i:B_i \leq p\}$ and define
\begin{align*}
\mathcal{A}_1(p)=\bigcup_{k,l\in V, U_l-U_k>\frac{1}{np}}\Big\{\Big|  |\{i\in S:U_k<U_i<U_l \}| & -np|U_l-U_k| \Big|\\
&>2\sqrt{np|U_l-U_k|}\log (n)\Big\},
\end{align*}
the event that some interval in the latent space has ``far too many" or ``far too few" observations. By Chernoff's inequality, for any given sequence $p=p(n)\in (0,1)$ satisfying $\lim \sup_{n\rightarrow \infty} \frac{-\log(p(n))}{\log(n)}<1$, we have that $\left(\mathcal{A}_1(p)\right)^c$ holds w.e.p. Informally, this means that all observed intervals will have ``about the right number" of observations w.e.p. We will use this fact frequently in the remainder of this section.

\begin{lemma}\label{DistLarge}
W.e.p. for all $i,j\in V_2\setminus V_1$ satisfying $U_j-U_i>d_2$,
\[
|Dist(i,j)|\geq np_1 d_1 \log(n)^{-1}.
\]
\end{lemma}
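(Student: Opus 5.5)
The plan is to reduce the statement to the concentration event $(\mathcal{A}_1(p_1))^c$, which holds w.e.p. by the Chernoff bound discussed just before the lemma. Recall that $Dist(i,j)=\{k\in V_1 : U_k\in [r(U_j)-d_1, r(U_j)]\}$ and $V_1=S(p_1)$. So $|Dist(i,j)|$ counts the points of $S(p_1)$ whose latent positions fall in an interval of length $d_1$. The hypothesis $p_1\ge \log(n)^{-3}$ gives $\limsup -\log(p_1)/\log(n)=0<1$, so the Chernoff statement applies. The only issue is that $(\mathcal{A}_1(p_1))^c$ controls intervals whose endpoints are latent positions $U_k,U_l$ of vertices, whereas $r(U_j)$ and $r(U_j)-d_1$ are not themselves latent positions.

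To handle the endpoints, I would intersect with a second w.e.p. event: every subinterval of $[0,1]$ of length $\log(n)^2/n$ contains at least one latent position $U_k$ with $k\in V$. This follows from a union bound over a grid of $O(n)$ intervals, each empty with probability at most $(1-\log(n)^2/(2n))^n\le e^{-\log(n)^2/2}$. There is also a boundary case: if $U_j>1-\rho$, Assumption \ref{DecayRateAssume} does not force $r(U_j)<1$, and we may simply have $r(U_j)=1$. In that case the interval $[1-d_1,1]$ still lies in $[0,1]$, since $d_1\le \log(n)^{-2}<1$, so nothing changes.

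On the intersection of the two events, I pick $k,l\in V$ with $U_k\in[r(U_j)-d_1,\, r(U_j)-d_1+\log(n)^2/n]$ and $U_l\in[r(U_j)-\log(n)^2/n,\, r(U_j)]$. Then
\[
U_l-U_k\ge d_1-2\log(n)^2/n\ge d_1/2,
\]
because $d_1>n^{-1/(1+\alpha)}\gg \log(n)^2/n$; in particular $U_l-U_k>1/(np_1)$. The event $(\mathcal{A}_1(p_1))^c$ then gives
\[
|Dist(i,j)|\ \ge\ np_1(U_l-U_k)-2\sqrt{np_1(U_l-U_k)}\log(n)\ \ge\ \tfrac12 np_1d_1-2\sqrt{np_1d_1}\log(n).
\]
It remains to check that $np_1d_1\ge n^{\alpha/(1+\alpha)}\log(n)^{-3}$ grows polynomially when $\alpha>0$, so the square-root term is of lower order. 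The right-hand side is then at least $np_1d_1/4\ge np_1d_1\log(n)^{-1}$ for $n$ large.

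All of this is uniform in the pair $(i,j)$, since both events are global, so there is no need for a union bound over pairs, and the hypothesis $U_j-U_i>d_2$ is not used. The main point of care is the growth check when $\alpha$ is near $0$. For $\alpha=0$, $np_1d_1$ may be only polylogarithmic, and the rate $n^{\alpha/(1+\alpha)}$ could be swamped. If that case must be covered, I would strengthen the requirement to $np_1d_1\ge \log(n)^{C}$ for a large $C$ via the parameter choices. Otherwise I rely on $0<\alpha$ as in Assumption \ref{DecayRateAssume}.
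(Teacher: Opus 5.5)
Your argument is correct and follows the paper's route. The paper's own proof is a single line: it reduces to the Chernoff event $(\mathcal{A}_1(p_1))^c$ by asserting $\{|Dist(i,j)|<np_1d_1\log(n)^{-1}\}\subset\mathcal{A}_1(p_1)$, relying on $n^{-1/(1+\alpha)}<d_1$ with $\alpha>0$ for growth, and it also needs no union bound over pairs and never uses $U_j-U_i>d_2$. Your auxiliary spacing event (every interval of length $\log(n)^2/n$ contains a latent position) supplies the justification that this containment skips, since $\mathcal{A}_1$ only controls intervals whose endpoints are latent positions and $I_{true}(i,j)$'s endpoints need not be.
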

\begin{proof}
Recall the definition of $I_{true}$ from Equation \eqref{Itrue}. By definition, $|I_{true}(i,j)|=d_1$ and the set $Dist(i,j)=\{k\in V_1: U_k\in I_{true}(i,j)\}$ has expected size
\[
\mathbb{E}[|Dist(i,j)|]=np_1 d_1.
\]
We also note that $n^{-1/(1+\alpha)}<d_1 <1 $. Combining these two facts, for all $i,j\in V_2\setminus V_1$,
\[
\{|Dist(i,j)|<n p_1d_1 \log(n)^{-1}\} \subset \mathcal{A}_1(p_1).
\]
Since w.e.p. $\left(\mathcal{A}_1(p_1)\right)^c$ holds, the lemma follows.
\end{proof}

The following lemma follows directly from arguments similar to those used in the proof of Lemma 4.4 in \cite{janssen2022reconstruction}.

\begin{lemma}\label{SignalBelongstoR}
W.e.p. for all $i,j\in V_2\setminus V_1$ satisfying $U_j-U_i>d_2$,
\[
Dist(i,j)\cap (N(j) \cup N(i))\subseteq R(i,j).
\]
\end{lemma}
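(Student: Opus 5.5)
We need to show that every vertex $k \in Dist(i,j)$ adjacent to $i$ or $j$ lies among the top $C_1$ elements, under $\sigma_1$, of $S := V_1\cap(N(i)\cup N(j))$. Equivalently, at most $C_1$ elements $p \in S$ satisfy $\sigma_1(k)\le\sigma_1(p)$. The plan is to bound the number of such $p$ directly by counting vertices of $V_1$ in a short latent interval, and then to use the event $\mathcal{A}_1(p_1)^c$.

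\textbf{Step 1: reduce to a latent interval.} Fix $k\in Dist(i,j)\cap(N(i)\cup N(j))$, so that $U_k\ge r(U_j)-d_1$. Let $p\in S$ with $\sigma_1(p)\ge\sigma_1(k)$. Since $\sigma_1$ has precision level $d_1$ (in the orientation agreeing with $\sigma_{true}$, which we fix wlog), either $U_p\ge U_k$ or $|U_p-U_k|<d_1$. In both cases $U_p> U_k-d_1\ge r(U_j)-2d_1$.

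On the other hand, $p$ is a neighbour of $i$ or of $j$, so $U_p\le\max\{r(U_i),r(U_j)\}$. By the Robinson property, $r(\cdot)$ is nondecreasing, so $r(U_i)\le r(U_j)$ because $U_i<U_j$. Hence $U_p\le r(U_j)$, and therefore
\[
U_p\in J:=[r(U_j)-2d_1,\,r(U_j)].
\]
Near the right boundary $r(U_j)$ may equal $1$, but the same containment still holds.

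\textbf{Step 2: count via $\mathcal{A}_1$.} The number of such $p$ is at most $|\{p\in V_1:U_p\in J\}|$. We have $2d_1> n^{-1/(1+\alpha)}\ge 1/(np_1)$, since $p_1\ge\log(n)^{-3}$ and $n$ is large. So on $\mathcal{A}_1(p_1)^c$ this count is at most
\[
2np_1d_1+2\sqrt{2np_1d_1}\log n .
\]
The event $\mathcal{A}_1$ is stated for intervals with endpoints at latent positions of vertices. I would handle this by enlarging $J$ slightly to the nearest vertices' positions; the extra gap is $O(\log n/(np_1))$ w.e.p., which is negligible.

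Because $np_1d_1\to\infty$ (indeed $np_1d_1\ge n^{\alpha/(1+\alpha)}\log(n)^{-3}$), the count is at most $3np_1d_1<np_1d_1\log n\le C_1$ for large $n$. This is exactly the requirement $k\in R(S,\sigma_1,C_1)$.

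\textbf{Step 3: uniformity and independence.} All statements hold simultaneously for all pairs $i,j$ and all $k$, because the argument only uses the single event $\mathcal{A}_1(p_1)^c$ (holding w.e.p.) together with the deterministic precision property of $\sigma_1$. No union bound over edge randomness is needed, and the fact that $\sigma_1$ was built from edges internal to $V_1$ plays no role here. The hypothesis $U_j-U_i>d_2$ is used only to fix the labelling $U_i<U_j$.

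\textbf{Main obstacle.} The main subtle point is the orientation/precision step. We need the precision-$d_1$ agreement in Definition \ref{Def-precision-level} to be with $\sigma_{true}$ rather than $\sigma_{rev}$ consistently for the pair; this is covered by the wlog convention. We also need the monotonicity of $r$ from \eqref{EqDefGraphonRobinson}: if $y>r(x)$ and $x'<x$, then $w(x',y)\le w(x,y)=0$. Once these are checked, the remainder is the counting bound above.
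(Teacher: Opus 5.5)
Your proof is correct and is essentially the paper's argument. The paper argues by contradiction: it takes the highest-ranked $k\in Dist(i,j)\cap(N(i)\cup N(j))$ missed by $R(i,j)$, uses precision level $d_1$ to place every member of $R(i,j)$ in $(r(U_j)-2d_1,\,r(U_j))$, and concludes that this interval holds at least $C_1$ vertices of $V_1$, contradicting $\mathcal{A}_1(p_1)^c$. That is your localization-plus-counting step, done directly. Your remarks on the monotonicity of $r(\cdot)$ and on $J$'s endpoints not being latent positions fill in details the paper leaves implicit.
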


\begin{proof}
Suppose $ Dist(i,j)\cap (N(j) \cup N(i)) \not\subset R(i,j)$. Let
\[
k=\mbox{argmax}\{\sigma_1(s): s\in \left( Dist(i,j)\cap (N(j) \cup N(i))\right)\setminus R(i,j)\}.
\]
That is, $k$ is the most highly ranked vertex in the set $ Dist(i,j)\cap (N(j) \cup N(i))$ that is not included in $R(i,j)$.
For simplicity of notation, let $r_j = r(U_j)$. Recalling the definition of $I_{true}$ in \eqref{Itrue} , $U_k \in I_{true}$, so
$U_k \geq r_j - d_1$. Each $l\in R(i, j)$ is a neighbor of $i$ or $j$, so $U_l < \max\{r_j,r_i\}=r_j$.
Moreover, since $l \in R(i, j)$ and $k \notin R(i, j)$, it follows from the definition of
$R(i, j)$ that $\sigma_1(l) > \sigma_1(k)$. We assumed that $\sigma_1$ agrees with $\sigma_{true}$ at precision
level $d_1$. So then either $l$ and $k$ are correctly ordered by $\sigma_1$ (in which case
$U_l > U_k \geq r_j - d_1$), or $|U_l - U_k| < d_1$ (in which case $U_l > U_k - d_1 \geq r_j - 2d_1$). It follows that for each $l \in R(i, j)$,
$r_j - 2d_1 < U_l < r_j$. Since we have that $R(i, j) \subset V_1$ and $C_1 = \left\lceil np_1 d_1\log(n) \right\rceil$,
\begin{align*}
\{ Dist(i,j)\cap (N(j) & \cup N(i)) \not \subset R(i,j)\} \\
&\subset  \{|\{s\in V_1: r_j-2d_1 <U_s<r_j\}|\geq C_1\}\subset  \mathcal{A}_1(p_1).
\end{align*}
Then we complete the proof by the fact that $(\mathcal{A}_1(p_1))^c$ occurs w.e.p.
\end{proof}

For a set $S\subset V$, an ordering $\sigma$ on $S$ and a pair $k,l\in S$, define the interval $I(S,\sigma,k,l)=\{s\in S: \sigma(k)<\sigma(s)<\sigma(l)\}$ and define $\mathcal{I}(S,\sigma)=\{I(S,\sigma,k,l): k,l\in S\}$ to be the collection of such intervals. For any vertex $i\in V$ and the set $I\subset V$, define the expected number of neighbors in $I$  
\be \label{EqDefWNeighbourSize}
W(i,I)=\sum_{k\in I}w(U_i,U_k)
\ee 
and the associated difference $\bar{W}(i,j,I)=W(j,I)-W(i,I)$. Thus $\bar{W}$ is the conditional mean of the corresponding edge-count difference after the latent positions have been fixed. Given a set $S \subseteq V$ and ordering $\sigma$ of $S$, we define for $I\in \mathcal{I}(S,\sigma)$
\begin{align*}
\mathcal{A}_2(i,j,I):=\Big\{ \Big|  |I \cap N(j)|-|I \cap N(i)| & - \bar{W}(i,j,I)  \Big|  \\
& >\sqrt{\bar{W}(i,j,I)}\log n + \log(n)^2 \Big\}.
\end{align*}
For $i,j\in V_2\setminus V_1$ and $I_U=[x,y]\subset [0,1]$, define the associated population quantity
\[
\mathcal{W}(i,I_U)=np_1\int_{x}^{y}w(U_i,z)dz
\]
and
\[
\mathcal{W}(i,j,I_U)=\mathcal{W}(j,I_U)-\mathcal{W}(i,I_U).
\]
Let $\mathcal{H}_1$ be the $\sigma$-algebra generated by all latent positions, all variables $B_k$, and the graph induced by $V_1$.

Recall the definition of ``signal" in Equation \eqref{EqDefSignal}. We have:

\begin{lemma}\label{LowerBoundSignal}
 W.e.p. for all $i,j\in V_2\setminus V_1$ satisfying $U_j-U_i>d_2$, 
\[
Signal(i,j) \geq 2C_2.
\]
\end{lemma}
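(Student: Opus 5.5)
By Lemma \ref{SignalBelongstoR}, w.e.p.\ $Signal(i,j)=|Dist(i,j)\cap N(j)|-|Dist(i,j)\cap N(i)|$. So the plan is a first-moment lower bound plus a concentration step, both carried out conditionally on $\mathcal{H}_1$. Fix $i,j\in V_2\setminus V_1$ with $d:=U_j-U_i>d_2$. Given $\mathcal{H}_1$, the set $Dist(i,j)$ is determined. Since $i,j\notin V_1$, the edges from $i$ and from $j$ to $Dist(i,j)$ are independent Bernoulli variables with means $w(U_i,U_k)$ and $w(U_j,U_k)$. The conditional mean of the signal is therefore $\bar W(i,j,Dist(i,j))$, and we need:
\begin{enumerate}[label=(\alph*)]
\item a deterministic lower bound on $\bar W(i,j,Dist(i,j))$ on the event $\mathcal{A}_1(p_1)^c$;
\item a Bernstein/Chernoff bound of the type defining $\mathcal{A}_2$;
\item a union bound over the at most $n^2$ pairs, which costs nothing at the w.e.p.\ scale.
\end{enumerate}

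For the mean, write $U_k=r(U_j)-z$ with $z\in[0,d_1]$. Apply part (3) of Assumption \ref{DecayRateAssume} with $x=U_j$, $y=U_i$, $x-y=d$:
\[
w(U_j,r(U_j)-z)-w(U_i,r(U_j)-z)\ \ge\ M_1^{-1}\bigl(z^\alpha-\max\{0,(z-d)^\alpha\}\bigr).
\]
If $d\ge d_1$, this is at least $M_1^{-1}z^\alpha$. If $d<d_1$, concavity gives $z^\alpha-(z-d)^\alpha\ge \alpha d z^{\alpha-1}\ge \alpha d\, d_1^{\alpha-1}$ for $z\in[d,d_1]$, and $z^\alpha\ge d^\alpha\ge d\,d_1^{\alpha-1}$ for $z<d$. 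In both cases the integrand is at least $c\,d_1^{\alpha-1}\min\{d,d_1\}$, uniformly for $z$ in, say, $[d_1/2,d_1]$.

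To pass from the population integral $\mathcal{W}(i,j,I_U)$ to the empirical sum over $Dist(i,j)$, I would use $\mathcal{A}_1(p_1)^c$. That event says the subinterval $r(U_j)-[d_1/2,d_1]$ contains $np_1d_1/2\,(1-o(1))$ points of $V_1$, since $np_1d_1\to\infty$ by \eqref{parameter}. This yields
\[
\bar W(i,j,Dist(i,j))\ \ge\ c'M_1^{-1}\, np_1\, d_1^{\alpha}\min\{d,d_1\}\ \ge\ c'M_1^{-1}\,np_1 d_1^\alpha d_2=c'M_1^{-1}(np_1)^{1/2}d_1^{(1+\alpha)/2},
\]
which is $6c'C_2$ by the definition of $d_2$ and $C_2$.

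For the fluctuation, the conditional variance of the difference is at most $\sum_{k\in Dist}(w(U_i,U_k)+w(U_j,U_k))\le 2M_0|Dist|d_1^\alpha$. Using $\mathcal{A}_1(p_1)^c$, this is $O(np_1d_1^{1+\alpha})$. Bernstein then bounds the deviation w.e.p.\ by $O\bigl((np_1d_1^{1+\alpha})^{1/2}\log n+\log^2 n\bigr)$. I would then take expectations over $\mathcal{H}_1$, using that the bound is uniform on $\mathcal{A}_1(p_1)^c$.

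\textbf{Main obstacle.} Comparing the two scales, the mean is of order $(np_1d_1^{1+\alpha})^{1/2}$ when $d\approx d_2$, which matches the standard deviation up to the $\log n$ factor. So the subtle step is to make the mean beat the $\log n$ in the deviation. I would first try to extract this from the slack in the definitions and the ranges in \eqref{parameter}: use the full window $z\in[0,d_1]$ rather than half of it, and the fact that $d>d_2$. If that fails, the fallback is to absorb a polylogarithmic factor into $d_2$ (equivalently into $C_2$). This changes the final rate only by $n^{\epsilon}$, which the main theorem allows.
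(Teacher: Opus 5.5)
Your proposal does not prove the lemma. The decisive step, showing that the conditional mean of $|Dist(i,j)\cap N(j)|-|Dist(i,j)\cap N(i)|$ beats its fluctuation at the w.e.p.\ scale, is left open, and your first suggested remedy cannot work.

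The full window $[0,d_1]$ gives a mean of at least $3C_2$, as in \eqref{SignalExpectation}. That is a constant multiple of $(np_1)^{1/2}d_1^{(1+\alpha)/2}$. The conditional variance is
\[
\sum_{k\in Dist(i,j)}\bigl[w(U_j,U_k)(1-w(U_j,U_k))+w(U_i,U_k)(1-w(U_i,U_k))\bigr].
\]
On $\mathcal{A}_1(p_1)^c$ this is not only $O(np_1d_1^{1+\alpha})$ but also $\Omega(np_1d_1^{1+\alpha})$, by part (1) of Assumption \ref{DecayRateAssume} and $d_1\to 0$. So mean and standard deviation are of the same order. The hypothesis $U_j-U_i>d_2$ gives no extra room as $U_j-U_i\downarrow d_2$.

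Concretely, take the graphon \eqref{ExSimpleGraphon}. A pair with $U_j-U_i$ just above $d_2$ has $Signal(i,j)<2C_2$, and even $Signal(i,j)<0$, with conditional probability bounded away from $0$ by the CLT. So the conclusion at threshold exactly $d_2$ is false w.e.p., and no choice of constants in your argument rescues it.

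Your fallback therefore proves a different statement. It is the threshold on $U_j-U_i$ that must be raised; shrinking $C_2$ alone does not help. With $U_j-U_i>d_2\log n$ the mean is $\gtrsim \log n$ standard deviations, and Bernstein gives failure probability $e^{-\Omega(\log^2 n)}$. That is the honest repair. It costs a polylogarithmic factor in the precision, of the kind Theorem \ref{theorem-erss} already incurs.

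The paper's proof gets past this point only by asserting that $(\mathcal{A}_2(i,j,Dist(i,j)))^c$ holds w.e.p.\ ``by Hoeffding''. Here $\mathcal{A}_2$ measures deviations on the scale $\sqrt{\bar{W}(i,j,Dist(i,j))}\log n+\log(n)^2$, and $\bar{W}$ is the conditional mean of the \emph{difference}. That is why $\bar{W}\geq 5C_2/2$ suffices there. But neither Hoeffding (scale $\sqrt{|Dist(i,j)|}\log n$) nor Bernstein (scale $(np_1d_1^{1+\alpha})^{1/2}\log n$) gives this. For $U_j-U_i\approx d_2$ one has $\bar{W}\asymp np_1d_2d_1^{\alpha}$, which is smaller than the variance by the factor $d_2/d_1\to 0$. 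So your diagnosis of the obstacle is more accurate than the paper's argument.

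Handling the first moment by direct counting on $\mathcal{A}_1(p_1)^c$, instead of the paper's $\mathcal{A}_3$ step, is fine. Two smaller slips, though:
\begin{itemize}
\item ``$z^\alpha\geq d^\alpha$ for $z<d$'' is backwards. On $[d_1/2,d_1]$, use $z^\alpha\geq (d_1/2)^\alpha$ instead.
\item The half window only gives $c'\approx\alpha/2$, i.e.\ a mean of about $3\alpha C_2$, which is below $2C_2$ when $\alpha<2/3$. For the stated constant you need the full-window integral, as in \eqref{SignalExpectation}.
\end{itemize}
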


\begin{proof}
From Lemma \ref{SignalBelongstoR} above, we know that w.e.p.
\be
\nonumber Signal(i,j) &= |R(i,j)\cap Dist(i,j)\cap N(j)|- |R(i,j)\cap Dist(i,j)\cap N(i) |\\
\label{SignalLowerBound} &= |Dist(i,j)\cap N(j)|- |Dist(i,j)\cap N(i)|.
\ee
Conditional on $\mathcal{H}_1$, the set $Dist(i,j)$ is fixed and the right hand side of \eqref{SignalLowerBound} has conditional mean $\bar{W}(i,j,Dist(i,j))$.

We first compare this conditional mean to the population quantity $\mathcal{W}(i,j,I_{true})$. Set
\[
\mathcal{A}_{3}(i,j)=\left\{
\left|\bar{W}(i,j,Dist(i,j))-\mathcal{W}(i,j,I_{true})\right|>C_2/2
\right\}.
\]
Conditionally on $U_i,U_j$, the random variable $\bar{W}(i,j,Dist(i,j))$ is a sum of independent bounded random variables
\[
Y_k=\mathbf{1}_{\{B_k\leq p_1\}}\mathbf{1}_{\{U_k\in I_{true}(i,j)\}}
\left(w(U_j,U_k)-w(U_i,U_k)\right), \qquad k\notin\{i,j\}.
\]
Moreover, $\mathbb{E}[\sum_{k\neq i,j}Y_k\mid U_i,U_j]$ is equal to $\mathcal{W}(i,j,I_{true})$ up to replacing $n$ by $n-2$, a harmless difference that is absorbed into the constants below. Thus, by Bernstein's inequality and a union bound over $i,j$, $(\mathcal{A}_{3}(i,j))^c$ holds w.e.p. for all pairs under consideration.

For sufficiently large $n$ satisfying $r(U_j)-U_j>d_1$, part (3) of Assumption \ref{DecayRateAssume} guarantees that for $U_j-U_i>d_2$ and $x\in [0,d_1]$,
\[
w(U_j, r(U_j)-x)-w(U_i,r(U_j)-x) \geq M_1^{-1}\left(x^{\alpha}-\max\{0,(x-d_2)^{\alpha}\}\right).
\]
Hence,
\be
\nonumber \mathcal{W}(i,j,I_{true}) &\geq  \frac{np_1}{M_1} \int_0^{d_1} \left(x^{\alpha}-\max\{0,(x-d_2)^{\alpha}\} \right)dx\\
\nonumber &=  \frac{np_1}{M_1}\left( \int_0^{d_2} x^{\alpha}dx+  \int_{d_2}^{d_1 }(x^{\alpha}-(x-d_2)^{\alpha})dx\right)\\
\nonumber &= \frac{np_1(d_1 )^{1+\alpha}}{M_1(1+\alpha)} \left[1-\left(1-\frac{d_2}{d_1}\right)^{1+\alpha}\right]\\
\nonumber &= \frac{np_1(d_1 )^{1+\alpha}}{M_1(1+\alpha)} \left[(1+\alpha)\frac{d_2}{d_1}-\frac{(1+\alpha)\alpha}{2}\left(\frac{d_2}{d_1}\right)^2+O\left(\left(\frac{d_2}{d_1}\right)^3\right)\right]\\
\nonumber &\geq \frac{1}{M_1} n p_1 d_2 (d_1)^{\alpha}\left(1-\frac{\alpha d_2}{2d_1}\right) \\
\label{SignalExpectation} &\geq \frac{1}{2M_1}np_1 d_2 (d_1)^{\alpha}=3C_2,
\ee
where the last line follows from the fact that $d_2/d_1 < 1$ and $\alpha<1$.
On $(\mathcal{A}_{3}(i,j))^c$, this implies $\bar{W}(i,j,Dist(i,j))\geq 5C_2/2$. Therefore, for $n$ sufficiently large,
\[
\sqrt{\bar{W}(i,j,Dist(i,j))}\log n+\log(n)^2 \leq \bar{W}(i,j,Dist(i,j))-2C_2.
\]
Recalling the definition of ``signal" in Equation \eqref{EqDefSignal}, we conclude that
\[
\{Signal(i,j)<2C_2\} \subset \mathcal{A}_{3}(i,j)\cup \mathcal{A}_2(i,j,Dist(i,j)).
\]
Finally, conditional on $\mathcal{H}_1$, the difference $|Dist(i,j) \cap N(j)|-|Dist(i,j) \cap N(i)|$ is a sum of independent bounded random variables with conditional mean $\bar{W}(i,j,Dist(i,j))$. Applying Hoeffding's inequality and a union bound over $i,j$, we have that
\[
\left(\mathcal{A}_2(i,j,Dist(i,j))\right)^c \mbox{ holds w.e.p. for all pairs under consideration.}
\]
Combining this estimate with $(\mathcal{A}_{3}(i,j))^c$, the result follows.
\end{proof}

For sets $S,T \subseteq V$ and ordering $\sigma$ of $S$, we define
\begin{align*}
\mathcal{A}_4(S,T,\sigma):=\bigcup_{I\in \mathcal{I}(S,\sigma),i\in T} \Big\{\Big|  |I \cap N(i)|& -W(i,I)  \Big|\\
&>\sqrt{W(i,I)}\log(n)+\log (n)^2 \Big\}.
\end{align*}

Recall the definition of ``noise" in Equation \eqref{EqDefNoise}. We have:

\begin{lemma}\label{UperBoundNoise}
W.e.p. for all $i,j\in V_2\setminus V_1$ satisfying $U_j-U_i>d_2$,
\[
Noise(i,j)\leq C_2.
\]
\end{lemma}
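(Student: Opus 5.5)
The plan is to fix $i,j\in V_2\setminus V_1$ with $U_j-U_i>d_2$. Throughout I assume $U_i<U_j$ and write $r_j=r(U_j)$. Three steps: (a) show that w.e.p. $R(i,j)$ lies in a short window to the left of $r_j$, inside the region where $w(U_j,\cdot)\geq w(U_i,\cdot)$; (b) conclude that the conditional mean of $Noise(i,j)$ is nonpositive; (c) control the fluctuation of $Noise(i,j)$ uniformly in $(i,j)$ by conditioning on $\mathcal{H}_1$ and taking a union bound over the polynomially many $\sigma_1$-intervals, as in the event $\mathcal{A}_4$.

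\textbf{Step 1 (localizing $R$).} Every $l\in R(i,j)$ is a neighbour of $i$ or $j$, so $U_l<\max\{r(U_i),r_j\}=r_j$, by monotonicity of $r$ from Assumption \ref{U-r-relationship}. For the lower end, the number of $V_1$-neighbours of $j$ in $[r_j-t,r_j]$ has mean of order $np_1t^{1+\alpha}$ by part (1) of Assumption \ref{DecayRateAssume}. Using $(\mathcal{A}_1(p_1))^c$ together with $\mathcal{A}_4(V_1,\{i,j\},\sigma_1)^c$, this count already exceeds $C_1$ once $t=t_*:=K(d_1\log n)^{1/(1+\alpha)}$ for a large constant $K$. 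Since $\sigma_1$ has precision level $d_1$, the same argument as in Lemma \ref{SignalBelongstoR} then gives $R(i,j)\subseteq\{l\in V_1: r_j-t_*-2d_1<U_l<r_j\}$. Because $t_*\to0$ while $r_j-U_j\geq\rho$, every $l\in R(i,j)$ satisfies $U_j<U_l$, so the Robinson property \eqref{EqDefGraphonRobinson} gives $w(U_i,U_l)\leq w(U_j,U_l)$.

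\textbf{Step 2 (mean).} Let $k^*$ be the lowest-ranked element of $R(i,j)$ under $\sigma_1$, and set $J=\{l\in V_1:\sigma_1(l)\geq\sigma_1(k^*)\}\setminus Dist(i,j)$. Neither $i$ nor $j$ is in $V_1$. Hence
\[
R(i,j)\setminus Dist(i,j)=J\cap(N(i)\cup N(j)),\qquad Noise(i,j)=|J\cap N(i)|-|J\cap N(j)|.
\]
Conditional on $\mathcal{H}_1$, the set $J$ is one of at most $n$ deterministic sets, indexed by $k^*$ (a $\sigma_1$-tail minus the fixed set $Dist(i,j)$). For each such fixed $J$, the difference $|J\cap N(i)|-|J\cap N(j)|$ is a sum of independent bounded terms with mean $-\bar W(i,j,J)\leq 0$, by Step 1. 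Part (3) of Assumption \ref{DecayRateAssume} in fact gives a strictly negative mean. Each $x\in[d_1,t_*+2d_1]$ contributes at least $M_1^{-1}(x^\alpha-(x-d_2)^\alpha)\gtrsim d_2x^{\alpha-1}$ to the difference, so integrating yields a margin of order $np_1d_2\,t_*^{\alpha}$.

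\textbf{Step 3 (fluctuation) and main obstacle.} By Bernstein's inequality and a union bound over $i,j$ and the $n$ choices of $k^*$, w.e.p.
\[
Noise(i,j)\leq-\bar W(i,j,J)+\sqrt{V}\log n+\log(n)^2,\qquad V=W(i,J)+W(j,J).
\]
The statement follows if the right-hand side is at most $C_2$. This comparison is where I expect the real difficulty. The naive estimate $V\lesssim W(j,J)\approx C_1$ gives a fluctuation of order $\sqrt{np_1d_1}\,\log(n)^{3/2}$. That is larger than $C_2\asymp(np_1)^{1/2}d_1^{(1+\alpha)/2}$ by roughly a factor $d_1^{-\alpha/2}$. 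The negative mean from Step 2 is of the same order as $C_2$ (up to logarithms), so it does not obviously close this gap.

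I would therefore try two refinements. First, I would sharpen Step 1 by using the left-side quantity $L$ and the boundary decay to show that the relevant window is effectively only $O(d_1)$ wide near $r_j$, so that $V=O(np_1d_1^{1+\alpha})=O(C_2^2)$; then $\sqrt V\log n$ is $o(C_2)$ only after absorbing logarithms into the parameter choices. Second, failing that, I would split $J$ into dyadic shells $[r_j-2^{m+1}d_1,r_j-2^md_1]$ and compare, shell by shell, the Bernstein deviation $\sqrt{np_1(2^md_1)^{1+\alpha}}\log n$ against the shell's negative mean $np_1d_2(2^md_1)^{\alpha}$. The aim is to show that the negative drift dominates the fluctuation on all but the innermost $O(\log n)$ shells, and to bound those innermost shells directly by $C_2$.
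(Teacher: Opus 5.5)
Your Steps 1--2 follow the paper's proof closely. The paper also first shows $U_j<U_k$ for every $k\in R(i,j)$, because $j$ has more than $C_1$ neighbours in $(U_j,r(U_j))$. It then uses the Robinson property to get $W(i,I)\le W(j,I)$ and bounds $Noise(i,j)\le D_i+D_j$ via the deviation event $\mathcal{A}_4$. Your description of $R(i,j)\setminus Dist(i,j)$ as a $\sigma_1$-tail minus $Dist(i,j)$, with a union bound over $k^*$, is if anything more careful.

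The gap is Step 3. You never prove $-\bar W(i,j,J)+\sqrt{V}\log n+\log(n)^2\le C_2$, and neither refinement can prove it.
\begin{itemize}
\item The first refinement is ruled out by the definition of $R$. We have $|R(i,j)|=C_1=\lceil np_1d_1\log n\rceil$, while $Dist(i,j)\cap(N(i)\cup N(j))$ has only $O(np_1d_1^{1+\alpha}+\log(n)^2)=o(C_1)$ elements w.e.p. So $J\cap(N(i)\cup N(j))$ contains $(1-o(1))C_1$ realized neighbours, and $V\gtrsim C_1$. Your own Step 1 shows the window has width $t_*\asymp(d_1\log n)^{1/(1+\alpha)}\gg d_1$, not $O(d_1)$.
\item The second refinement runs the comparison the wrong way. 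For $U_j-U_i\approx d_2$, the ratio of your shell drift $np_1d_2(2^md_1)^{\alpha}$ to your shell deviation $\sqrt{np_1(2^md_1)^{1+\alpha}}\,\log n$ is exactly $2^{-m(1-\alpha)/2}/\log n$. This is $o(1)$ on every shell and shrinks outward, so the drift dominates on no shell.
\end{itemize}

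You diagnosed the obstruction correctly, and the paper does not supply the missing idea. Its proof closes this step by asserting $C_2/2\ge\sqrt{C_1}\log(n)+\log(n)^2$ for large $n$. That is precisely your naive estimate, and it is false under \eqref{parameter}, since $C_2^2/(C_1\log(n)^2)\asymp d_1^{\alpha}/(M_1^2\log(n)^3)\to0$.

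In fact the statement cannot hold w.e.p.\ with these parameters. Take a pair with $d_2<U_j-U_i\le 2d_2$, and condition on $\mathcal{H}_1$ and on the set $V_1\cap(N(i)\cup N(j))$, which fixes $R(i,j)$. Then $Noise(i,j)$ is a sum of $(1-o(1))C_1$ independent $\{-1,0,1\}$-valued terms. Its conditional variance is of order $C_1$, and its total drift is of order $np_1d_2t_*^{\alpha}$, with $np_1d_2t_*^{\alpha}/\sqrt{C_1}\asymp(d_1/t_*)^{(1-\alpha)/2}\to0$. Since also $C_2=o(\sqrt{C_1})$, $\mathbb{P}[Noise(i,j)>C_2]$ tends to $1/2$ rather than being $n^{-\Omega(\log n)}$.

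A correct version needs different parameters. For example, take $C_1$ of order $np_1d_1^{1+\alpha}$ times a logarithm, so that $R(i,j)$ genuinely lies in an $O(d_1)$-window and $V$ is $C_2^2$ up to logarithms. This is what your first refinement is really reaching for. The containment argument of Lemma \ref{SignalBelongstoR} survives if one counts only neighbours of $i$ or $j$ in $(r_j-2d_1,r_j)$. Finally, $C_2$ and $d_2$ must be inflated by polylogarithmic factors.
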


\begin{proof}
We first show that w.e.p. $U_j<U_k$ for all $k\in R(i,j)$. Consider the interval $J=\{s\in V_1: U_j<U_s<r(U_j)\}$ and define its latent counterpart $J_U=[U_j,r(U_j)]$. 
In view of Assumption \ref{DecayRateAssume}(1), there exists a constant $K\in (0,\infty)$ such that
\begin{align*}
\mathcal{W}(j,J_U) &= n p_1 \int_{U_j}^{r(U_j)} w(U_j,x)dx \\
&= n p_1 \int_{0}^{r(U_j)-U_j} w(U_j,r(U_j)-x)dx\\
&\geq K np_1 \int_0^{r(U_j)-U_j} x^{\alpha} dx \\
&= K n p_1(r(U_j)-U_j)^{1+\alpha} \geq K \rho^{1+\alpha} n p_1.
\end{align*}
Define the event
\[
\mathcal{A}_5(j)= \left\{
\left|W(j,J)-\mathcal{W}(j,J_U)\right|>\frac{K}{2}\rho^{1+\alpha} n p_1 \right\}.
\]
On $(\mathcal{A}_5(j))^c$, this implies $W(j,J)\geq \frac{K}{2}\rho^{1+\alpha}n p_1$. Therefore, for $n$ sufficiently large,
\[
\sqrt{W(j,J)}\log n+\log(n)^2 \leq W(j,J)-C_1,
\]
where we recall that $C_1= \left \lceil np_1 d_1 \log(n) \right \rceil \leq \left \lceil n p_1 \log(n)^{-1} \right \rceil$. Consequently,
\[
\{|N(j)\cap J|\leq C_1\} \subset \mathcal{A}_4(V_1,V_2\setminus V_1, \sigma_{true}) \cup \mathcal{A}_5(j).
\]
Using arguments similar to those used to establish that $(\mathcal{A}_2)^c$ and $(\mathcal{A}_3)^c$ hold w.e.p.,
it follows from Bernstein's inequality and Hoeffding's inequality that 
\[
\mbox{ $\left(\mathcal{A}_5(j)\right)^c$ and $\left(\mathcal{A}_4(V_1,V_2\setminus V_1, \sigma_{true})\right)^c$ }
\]
also hold w.e.p.
Hence, $\{|N(j)\cap J|> C_1\}$ occurs w.e.p.,
which implies that w.e.p. $j$ has at least $C_1$ neighbors with $U$-values greater than $U_j$. It follows that $U_j<U_k$ for all $k\in R(i,j)$.

Since $w$ is Robinson, it follows that
$W(i, R(i, j)) \leq W(j, R(i, j)) \leq |R(i, j)| = C_1$. Moreover, by definition of $R(i, j)$,
there must be an interval $I \in \mathcal{I}(V_1, \sigma_1)$ so that $R(i, j) \setminus Dist(i,j) = (I \cap N(i))\cup (I \cap N(j))$. Let $D_i=|N(i)\cap I|-W(i,I)$ and $D_j=W(j,I)-|N(j)\cap I|$. Then
\begin{align*}
Noise(i,j)&= |\left(R(i,j)\setminus Dist(i,j) \right)\cap N(i)|-|\left(R(i,j)\setminus Dist(i,j)\right) \cap N(j)|\\
&= |N(i)\cap I|-|N(j)\cap I|\leq D_i+D_j.
\end{align*}
It follows that
\begin{align*}
\{Noise(i,j)&>C_2\}\\
&\subset \{D_i +D_j \geq C_2\}\\
&\subset \{D_i\geq C_2/2\} \cup \{D_j \geq C_2/2\}\\
&\subset \{D_i\geq \sqrt{C_1}\log(n)+\log(n)^2\} \cup \{D_j \geq \sqrt{C_1}\log(n)+\log(n)^2\}\\
&\subset \mathcal{A}_4(V_1,V_2\setminus V_1, \sigma_1),
\end{align*}
where the third line comes from the fact that $C_2/2 \geq \sqrt{C_1}\log(n)+\log(n)^2$ for sufficiently large $n$. Since w.e.p. $\left(\mathcal{A}_4(V_1,V_2\setminus V_1, \sigma_1)\right)^c$ occurs, the result follows immediately.
\end{proof}

For sets $S\subseteq V$, parameter $d\in (0,1)$ and ordering $\sigma$ of $S$, we define
\[
\mathcal{A}_6(S,\sigma,d)=\bigcup_{i,j\in S,|U_j-U_i|>d}\{\mathbf{1}_{\sigma(i)<\sigma(j)}=\mathbf{1}_{\sigma_{true}(i)>\sigma_{true}(j)}\}.
\]
That is, $\left(\mathcal{A}_6(S,\sigma,d)\right)^c$ holds if $\sigma$ agrees with $\sigma_{true}$ at precision level $d$.

\begin{lemma} \label{LemmaStepTenCorrect}
 Assume that $\sigma_1$ is an ordering of $V_1$, derived only from $G_1=G(V_1)$, which agrees with $\sigma_{true}$ at precision level $d_1$. Then, for the ordering $\sigma_2'$ obtained in Step 10 of Algorithm \ref{alg:erss}, 
\[
\mathbb{P}[\mathcal{A}_6(V_2\setminus V_1,\sigma_2',d_2)| \mathcal{H}_1]=n^{-\Omega(\log(n))}.
\]
That is, w.e.p. $\sigma_2'$ is an ordering of $V_2\setminus V_1$ that agrees with $\sigma_{true}$ at precision level $d_2$.
\end{lemma}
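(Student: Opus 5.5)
The plan is to reduce the statement to a pairwise claim about the comparison function $F$ built in Steps 4--8, and then show that the score-based ranking $\sigma_F$ inherits precision level $d_2$ deterministically.

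\textbf{Step 1 (pairwise correctness).} I will show that w.e.p., conditional on $\mathcal{H}_1$, every pair $i,j\in V_2\setminus V_1$ with $|U_i-U_j|>d_2$ receives the correct value of $F$. Take $U_i<U_j$. By \eqref{Signal-Noise}, together with Lemma \ref{LowerBoundSignal} and Lemma \ref{UperBoundNoise}, we get
\[
|R(i,j)\cap N(j)|-|R(i,j)\cap N(i)|\geq 2C_2-C_2=C_2 .
\]
I expect a strict inequality here after a harmless adjustment of constants, for instance because the computation \eqref{SignalExpectation} actually gives slack. So the ``correct'' condition holds.

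If $i<j$ in index order, the first branch fires and $F$ is set correctly. If $i>j$, the roles are swapped, and I also need the opposite-direction conditions to fail. Concretely, I need that w.e.p.
\[
|N(i)\cap R|-|N(j)\cap R|\leq C_2 \quad\text{and}\quad |N(j)\cap L|-|N(i)\cap L|\leq C_2 .
\]
For the $R$ part, the proof of Lemma \ref{UperBoundNoise} shows that $R(i,j)$ lies to the right of $U_j$. By the Robinson property, $W(i,I)\leq W(j,I)$ for the relevant interval $I\in\mathcal{I}(V_1,\sigma_1)$. Then $(\mathcal{A}_4(V_1,V_2\setminus V_1,\sigma_1))^c$ bounds the wrong-way difference by $2(\sqrt{C_1}\log n+\log(n)^2)\le C_2$.

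For the $L$ part, the mirror-image argument applies: $L(i,j)$ lies to the left of $U_i$, so the Robinson property gives $W(j,\cdot)\leq W(i,\cdot)$ there. Its derivation uses parts (2) and (4) of Assumption \ref{DecayRateAssume} in place of (1) and (3).

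All the events involved depend only on edges between $V_1$ and $V_2\setminus V_1$. Conditional on $\mathcal{H}_1$ these edges are independent, and $\sigma_1$ is $\mathcal{H}_1$-measurable. So the Hoeffding, Bernstein and union-bound estimates over the at most $n^2$ pairs go through conditionally, giving failure probability $n^{-\Omega(\log n)}$.

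\textbf{Step 2 (from pairs to the ranking).} On the event of Step 1, fix $i,j$ with $U_j-U_i>d_2$. With the paper's convention, $F(i,k)=1$ means that $k$ is later. Write
\[
\gamma_F(i)-\gamma_F(j)=\sum_{k}\bigl(F(i,k)-F(j,k)\bigr).
\]
I claim every summand has the same weak sign, so that $\sigma_F$ separates $i$ and $j$ consistently with $\sigma_{true}$ or with its reverse; the reverse is allowed by the convention. I check each range of $U_k$:
\begin{itemize}
\item If $U_k<U_i-d_2$ or $U_k>U_j+d_2$, both values of $F$ are correct and equal, so the summand is $0$.
\item If $U_k$ lies strictly between the two latent positions, the summand is either the full correct-sign contribution, or (when $k$ is within $d_2$ of one endpoint) one term is pinned to its correct value and the other lies in $\{-1,0,1\}$. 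Since $F$ takes values in $\{-1,0,1\}$, the summand still has the correct weak sign.
\item If $U_k$ is within $d_2$ of $U_i$ or $U_j$ but outside $[U_i,U_j]$, the same one-term-pinned argument gives the correct weak sign.
\item The terms $k\in\{i,j\}$ contribute $F(i,j)-F(j,i)=\pm 2$ strictly, in the correct direction, by Step 1.
\end{itemize}
Hence $\gamma_F(i)\neq\gamma_F(j)$ with the correct sign. The tie-breaking rule in $\sigma_F$ is then irrelevant, so $\sigma_2'$ orders $i,j$ correctly for every such pair. This is exactly $(\mathcal{A}_6(V_2\setminus V_1,\sigma_2',d_2))^c$.

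\textbf{Expected main obstacle.} The delicate part is Step 1's claim that the \emph{wrong-direction} tests never fire. In particular, the $L$-side needs the mirror versions of Lemmas \ref{SignalBelongstoR}--\ref{UperBoundNoise}, plus the Robinson comparison on the side where the extreme neighbours lie. I must also handle pairs near the boundary ($U_j>1-\rho$ or $U_i<\rho$), where only one of the $R$ or $L$ signals is available. My first attempt is to use the $R$-signal when $U_j\le 1-\rho$ and the $L$-signal otherwise, since $\rho<1/2$ guarantees at least one applies. I will check that the signal available on that side has the correct orientation, and that the other side's wrong-direction difference is still controlled by $\mathcal{A}_4$ via the Robinson property.
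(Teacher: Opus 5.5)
Your Step 1 is essentially the paper's argument: pairwise correctness of $F$ from \eqref{Signal-Noise} and Lemmas \ref{LowerBoundSignal} and \ref{UperBoundNoise}, the mirror argument on the left, and a union bound. Your Step 2 addresses something the paper's proof only asserts, namely that correctness of $F$ on $d_2$-separated pairs transfers to the score ranking $\sigma_F$. But Step 2 fails as written.

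If $d_2<U_j-U_i\le 2d_2$, there are vertices $k\in V_2\setminus V_1$ with $U_j-d_2\le U_k\le U_i+d_2$, i.e.\ within $d_2$ of \emph{both} endpoints. For such $k$ neither $F(i,k)$ nor $F(j,k)$ is pinned by Step 1, so the summand $F(i,k)-F(j,k)$ can equal $-2$ (both comparisons wrong). When, say, $U_j-U_i=\tfrac32 d_2$, there are of order $n(p_2-p_1)d_2$ such $k$, and together they can outweigh the $+2$ contributed by $k\in\{i,j\}$. Your ``one term is pinned'' bullet tacitly assumes $U_j-U_i>2d_2$, so the deterministic argument only yields precision $2d_2$.

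To obtain $d_2$ you would need to show that w.e.p.\ $F$ never takes the wrong sign even on pairs closer than $d_2$. Lemmas \ref{LowerBoundSignal} and \ref{UperBoundNoise} do not give this, since they concern only $d_2$-separated pairs. The cheaper repair is to settle for $2d_2$. This costs only a constant factor downstream, because Lemma \ref{LemmaExtCorrect} only uses the correct ordering of pairs of $V_2\setminus V_1$ separated by more than $d_2\log(n)$.

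There are two further points in Step 1.

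First, the wrong-way $R$ test needs no separate estimate. By \eqref{Signal-Noise}, $|N(i)\cap R|-|N(j)\cap R|=Noise(i,j)-Signal(i,j)\le C_2-2C_2<0$ by the two lemmas you already cite, and likewise on the left wherever the mirrored lemmas apply. You should not re-derive it via $(\mathcal{A}_4)^c$. The inequality $2(\sqrt{C_1}\log n+\log(n)^2)\le C_2$, which you take over from the proof of Lemma \ref{UperBoundNoise}, is false for the parameters in \eqref{parameter}, since $C_2/(\sqrt{C_1}\log n)\asymp d_1^{\alpha/2}\log(n)^{-3/2}\to 0$. This is a defect of that proof as well.

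Second, your boundary plan is not right as stated. The $R$-side lemmas rely on Assumption \ref{DecayRateAssume}(1),(3) and need $U_j\le 1-\rho$. Their $L$-side mirrors rely on (2),(4) and need $U_i\ge\rho$. So pairs with $U_i<\rho$ and $U_j>1-\rho$ are covered by neither, contrary to your claim that $\rho<1/2$ guarantees at least one applies. Moreover, for pairs covered by only one side, you must still prevent the other side's wrong-way test from firing in the first branch of the test in Algorithm \ref{alg:erss}. The paper's proof is silent on this too, but your proposal leaves it open rather than closing it.
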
 

\begin{proof}
Assume that $i,j\in V_2\setminus V_1$ and without loss of generality $U_j-U_i>d_2$. We conclude from \eqref{Signal-Noise}, Lemmas \ref{LowerBoundSignal} and \ref{UperBoundNoise} that w.e.p.
\[
|N(j)\cap R(i,j)|-|N(i)\cap R(i,j)|= Signal(i,j)-Noise(i,j)\geq C_2,
\]
and thus $F(i,j)$ is set to 1 in Algorithm \ref{alg:erss}. By applying similar arguments to the left region, we see then that w.e.p. 
\[
|N(i)\cap L(i,j)|-|N(j)\cap L(i,j)|\geq C_2,
\]
and thus $F(i,j)$ is set to 1 in Algorithm \ref{alg:erss}. Therefore, in both cases w.e.p., $i$ and $j$ will be ordered in $\sigma_2'$ to align with $\sigma_{true}$. The result immediately follows by a union bound.
\end{proof}

Finally, we show that the extension of $\sigma_{2}'$ to all of $V_{2}$, denoted by $\sigma_{2}$, has the desired precision level.

\begin{lemma} \label{LemmaExtCorrect}
Suppose $\sigma_{2}^{\prime}$ is an ordering of $V_{2}\setminus V_{1}$ that agrees with $\sigma_{\text{true}}$ at precision level $d_{2}$. Then, for the output $\sigma_2$ of Algorithm \ref{alg:erss},
\[
\mathbb{P}\left[\bigcup_{i,j\in V_{2},\,U_{j}>U_{i}+d_{2}\log(n)^{2}}\{t(j)-t(i)<C_{3}\}\cap\{b(j)-b(i)<C_{3}\}\right]=n^{-\Omega(\log(n))}.
\]
That is, w.e.p. any ordering of $V_{2}$ extending $\sigma_{2}^{\prime}$ and based on the function $F^{(2)}$ as computed in Algorithm \ref{alg:erss} agrees with $\sigma_{\text{true}}$ at precision level $d_{2}\log(n)^{2}$.
\end{lemma}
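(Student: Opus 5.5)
The plan is to prove the lemma by locating the extreme neighbours of $i$ and $j$ in latent space and then converting latent gaps into rank gaps, using the precision of $\sigma_2'$ and the counting event $(\mathcal{A}_1)^c$.

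Write $D=U_j-U_i>d_2\log(n)^2$, $q=p_2-p_1$ and $T=V_2\setminus V_1$. I assume, as the parameter choices in Algorithm \ref{alg:eras} allow, that $q\geq \log(n)^{-3}$ or similar, so that $T$ is a Bernoulli($q$) thinning and $(\mathcal{A}_1)^c$-type concentration holds for it w.e.p. By Assumption \ref{U-r-relationship}, $(r(U_j)-r(U_i))+(\ell(U_j)-\ell(U_i))\geq BD$, so one of the two boundary shifts is at least $BD/2$. I treat the case $r(U_j)-r(U_i)\geq BD/2$ and show $t(j)-t(i)\geq C_3$. The case $\ell(U_j)-\ell(U_i)\geq BD/2$ is symmetric and gives $b(j)-b(i)\geq C_3$. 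Near the edges of $[0,1]$, where $r$ or $\ell$ is pinned at $1$ or $0$, the other boundary carries the whole shift, so this case split still applies.

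\textbf{Step 1 (the top neighbour of $j$ is near $r(U_j)$).} Set $x^*=\left(\log(n)^{3}/(nq)\right)^{1/(1+\alpha)}$. By $(\mathcal{A}_1(q))^c$ for $T$, the interval $[r(U_j)-x^*,r(U_j)]$ contains about $nqx^*$ vertices of $T$. By Assumption \ref{DecayRateAssume}(1), each such vertex is adjacent to $j$ with probability at least $M_0^{-1}z^\alpha$, where $z$ is its distance to $r(U_j)$. The expected number of neighbours of $j$ in this window is therefore $\gtrsim nq(x^*)^{1+\alpha}\geq \log(n)^3$. The probability that $j$ has none is $\exp(-\Omega(\log(n)^3))$, and a union bound over $j$ shows that w.e.p. every $j$ has a neighbour $k_j\in T$ with $U_{k_j}\geq r(U_j)-x^*$.

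This step needs no independence from $\sigma_2'$. The edges from $j$ to $T$ are genuine Bernoulli variables given the latent positions, and the precision of $\sigma_2'$ enters only as a separate w.e.p. event that we intersect with. This matters because, when $j\in V_1$, those same edges were used to build $\sigma_2'$.

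\textbf{Step 2 (rank comparison).} Every neighbour $k$ of $i$ satisfies $U_k\leq r(U_i)\leq r(U_j)-BD/2$. Consider the vertices $s\in T$ with $U_s\in (r(U_i)+d_2,\; r(U_j)-x^*-d_2)$. Because $\sigma_2'$ has precision level $d_2$, each such $s$ satisfies $\sigma_2'(k)<\sigma_2'(s)<\sigma_2'(k_j)$ for every neighbour $k$ of $i$ in $T$. Hence
\[
t(j)-t(i)\geq \bigl|\{s\in T: U_s\in (r(U_i)+d_2,\ r(U_j)-x^*-d_2)\}\bigr|,
\]
and by $(\mathcal{A}_1(q))^c$ the right-hand side is at least $\tfrac12 nq(BD/2-x^*-2d_2)$. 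It then suffices to check two inequalities.

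\textbf{Step 3 (parameter bookkeeping).} The remaining work, and the main obstacle, is to verify two things.
\begin{enumerate}
\item $x^*+2d_2\ll BD$, i.e.\ $x^*\ll d_2\log(n)^2$. From \eqref{parameter} and $d_1>n^{-1/(1+\alpha)}$,
\[
n d_2^{1+\alpha}=n^{\frac{1-\alpha}{2}}p_1^{-\frac{1+\alpha}{2}}d_1^{\frac{1-\alpha^2}{2}}\geq p_1^{-\frac{1+\alpha}{2}}\geq 1,
\]
so $nq\,(d_2\log(n)^2)^{1+\alpha}\geq q\log(n)^{2(1+\alpha)}$. This exceeds $\log(n)^3$ once the exponent of the logarithm in $x^*$ is chosen a little smaller, or the $\log(n)^2$ factor in the lemma absorbs it; this is the point where I expect to fiddle with log powers.
\item $nqD\gtrsim nq\,d_2\log(n)^2\geq C_3$. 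Writing $nd_2=(n/p_1)^{1/2}d_1^{(1-\alpha)/2}$ and comparing with $C_3\approx (np_1)^{(1-\alpha)/2}d_1^{(1-\alpha^2)/2}\log(n)^{1+\alpha}$, the ratio is a positive power of $n^{\alpha/2}d_1^{\alpha(1+\alpha)/2}\cdots$. Using $d_1>n^{-1/(1+\alpha)}$ and $d_1\leq\log(n)^{-2}$, this ratio should be at least a polylog. The polylog losses from $q$ and $p_1$ must be tracked carefully here.
\end{enumerate}
With both inequalities in hand, the event in the lemma is contained in a finite union of the complements of the w.e.p. events above, together with the failure of precision of $\sigma_2'$. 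A union bound over the $O(n^2)$ pairs then gives probability $n^{-\Omega(\log n)}$. Finally, since $F(i,j)$ is then set correctly for every such pair, $\sigma_F$ respects those orderings. This follows as in Lemma \ref{LemmaStepTenCorrect}, since correct comparisons against all far-apart vertices force the $\gamma_F$ scores into the right order.
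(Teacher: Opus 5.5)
Your argument is sound and takes a genuinely different route from the paper. The paper never isolates a single extreme neighbour of $j$. It takes the window $I_2(j)=\{s\in V_2\setminus V_1: U_s>r(U_j)-d_2\log(n)\}$ and uses the precision of $\sigma_2'$ to see that every vertex of this window is ranked above $t(i)$, so $t(j)-t(i)\geq |I_2(j)\cap N(j)|$. Edge concentration then gives $|I_2(j)\cap N(j)|\geq \frac12 W(j,I_2(j))\gtrsim n(p_2-p_1)(d_2\log n)^{1+\alpha}$. Up to rounding, $C_3$ is exactly $\frac{np_1}{2(1+\alpha)}(d_2\log n)^{1+\alpha}$, so this comparison is tight and the paper has to invoke $p_2>2p_1$.

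You instead use one neighbour $k_j$ as an anchor and count \emph{all} vertices of $V_2\setminus V_1$ in the latent gap between $r(U_i)$ and $U_{k_j}$, neighbours or not. This replaces an edge count by a pure vertex count of order $nqD$. That count exceeds $C_3$ by a factor $\asymp \frac{q}{p_1}\,d_2^{-\alpha}\log(n)^{1-\alpha}=\frac{q}{p_1}(np_1)^{\alpha/2}d_1^{-\alpha(1-\alpha)/2}\log(n)^{1-\alpha}$. So your Step 3(2) needs only $d_1\leq 1$ and no calibration like $p_2>2p_1$. Your sketched exponent of $d_1$ there is off, but the conclusion stands.

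The price of your route is Step 1, which you leave open, and as written it does not close. With $x^*=(\log(n)^3/(nq))^{1/(1+\alpha)}$, the crude bound $nd_2^{1+\alpha}\geq 1$ gives $x^*\ll d_2\log(n)^2$ only when $q\gg \log(n)^{1-2\alpha}$, which is impossible for $\alpha<1/2$. The simple fix is to take the paper's window as your anchor window, i.e.\ $x^*=d_2\log(n)$. Then $x^*+2d_2\ll BD$ follows automatically from $D>d_2\log(n)^2$. You then only need $W(j,I_2(j))\geq \log(n)^{1+c}$ for some $c>0$ to make $N(j)\cap I_2(j)=\emptyset$ extremely unlikely. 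This is weaker than what the paper's concentration step needs, namely $\frac12 W(j,I_2(j))>C_3$ together with $W(j,I_2(j))\gtrsim \log(n)^2$, so your proof goes through whenever the paper's does.

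Your closing paragraph, like the paper, treats the passage from the displayed bound to the precision of $\sigma_2$ as immediate. Two points are glossed over in both. First, the loop over index-ordered pairs tests the branch ``$t(j)-t(i)>C_3$ or $b(j)-b(i)>C_3$'' first. Setting $F$ correctly therefore also requires the wrong-direction test to fail, which the displayed bound does not give. Second, correct values of $F$ at latent separation above $D$ only force the $\gamma_F$-ranking to be correct at separation above $2D$. This is because vertices within $D$ of both $U_i$ and $U_j$ can contribute with either sign; the factor $2$ is harmless.
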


\begin{proof}
Fix $i,j \in V_2$ satisfying $U_j-U_i>d_2 \log(n)^2$. Then we have that $\sigma_{true}(i)<\sigma_{true}(j)$. Since $w$ is Robinson,
\begin{align*}
\{z:w(U_{i},z)=0\neq w(U_{j},z)\text{ or } & w(U_{j},z)=0\neq w(U_{i},z)\}\\
&=[r(U_{i}),r(U_{j})]\cup[\ell(U_{i}),\ell(U_{j})].
\end{align*}
From Assumption \ref{U-r-relationship} we have that
$$
\left(r(U_{j})-r(U_{i})\right)+\left(\ell(U_{j})-\ell(U_{i})\right)\geq B|U_{j}-U_{i}|.
$$
If
$$
r(U_{j})-r(U_{i})\geq\ell(U_{j})-\ell(U_{i}),
$$
then
\be\label{rDifference}
r(U_{j})-r(U_{i})\geq \frac{B}{2}|U_{i}-U_{j}| > 2 d_{2}\log(n),
\ee
where the second inequality holds for all $n$ large enough that $\log(n)>\frac{4}{B}$. Define
\[
I_2(j)=\{s\in V_2 \setminus V_1: U_s>r(U_j)-d_2 \log (n)\}.
\]
We will first show that, for all $s \in I_{2}(j)$, $\sigma_{2}^{\prime}(s) > t(i)$. Let $k \in N(i)$ be the vertex at the top of the range of $N(i)$ according to $\sigma_{2}^{\prime}$, that is, $\sigma_{2}^{\prime}(k) = t(i)$. Since $k \in N(i)$, we have that $U_{k} \leq r(U_{i})$. In view of \eqref{rDifference}, we have for $s \in I_{2}(j)$,
$$
U_{s} - U_{k} \geq (r(U_{j}) - d_{2}\log(n)) - r(U_{i}) > d_{2}.
$$
This implies that $s$ and $k$ are correctly ordered by $\sigma_{2}^{\prime}$ and thus $t(i)=\sigma_{2}^{\prime}(k)<\sigma_{2}^{\prime}(s)$ as desired.

This correct ordering implies that $k\not\in I_{2}(j)$, and so we see that $t(j)-t(i)\geq|I_{2}(j)\cap N(j)|$, so we have the containment
\be\label{C3relation}
\{t(j)-t(i)<C_{3}\}\subset\{|I_{2}(j)\cap N(j)|<C_{3}\}.
\ee
Recall the definition of $W$ from Equation \eqref{EqDefWNeighbourSize}. Since we have already shown that $\mathbb{P}[\mathcal{A}_{4}(V_{2},V_{2}\setminus V_{1},\sigma_{\text{true}})]=n^{-\Omega(\log(n))}$, then it holds w.e.p. that
$$
|I_{2}\cap N(j)|\geq\frac{1}{2} W(j,I_{2}(j))\geq\frac{n(p_2-p_1)}{2(1+\alpha)} (d_2 \log(n))^{1+\alpha}>C_3,
$$
where the last inequality holds for all $n$ large enough that $p_2>2p_1$.
In view of \eqref{C3relation}, we have $t(j)-t(i)\geq C_{3}$ holds w.e.p., and thus $F^{(2)}(i,j)$ is set to 1 in Algorithm \ref{alg:erss}.

Similarly, if $U_{i}<U_{j}$ and
\[
r(U_{j})-r(U_{i})\leq \ell(U_{j})-\ell(U_{i}),
\]
then $\mathbb{P}[\{b(j)-b(i)<C_{3}\}]= n^{-\Omega(\log(n))}$. Therefore, w.e.p. $F^{(2)}(i,j)$ is set to 1 in Algorithm \ref{alg:erss}. The result follows by a union bound.
\end{proof}

All the results in this subsection lead to the following theorem: 

\begin{theorem}\label{theorem-erss}

Assume that $\sigma_1$ is an ordering of $V_1$, which agrees with $\sigma_{true}$ at precision level $d_1$. Then, for the output $\sigma_2$ of Algorithm \ref{alg:erss},
\[
\mathbb{P}[\mathcal{A}_6(V_2,\sigma_2,(n p_1)^{-1/2}(d_1)^{(1-\alpha)/2}\log(n)^2)]=n^{-\Omega(\log(n))}.
\]
That is, w.e.p. $\sigma_2$ is a total order of $V_2$ that agrees with $\sigma_{true}$ at precision level $(n p_1)^{-1/2}(d_1)^{(1-\alpha)/2}\log(n)^2$.
\end{theorem}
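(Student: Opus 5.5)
Theorem~\ref{theorem-erss} follows by chaining Lemma~\ref{LemmaStepTenCorrect} and Lemma~\ref{LemmaExtCorrect}, with a union bound and a short check that the comparison function $F$ forces the correct relative order in $\sigma_2=\sigma_F$. Write $d_2=(np_1)^{-1/2}(d_1)^{(1-\alpha)/2}$ as in \eqref{parameter}. Then $\sigma_2$ should agree with $\sigma_{true}$ to precision level $d_2\log(n)^2$.

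\textbf{Step 1: pairs inside $V_2\setminus V_1$.} Since $\sigma_1$ has precision level $d_1$ and depends only on $G_1$, Lemma~\ref{LemmaStepTenCorrect} applies conditionally on $\mathcal{H}_1$. Averaging over $\mathcal{H}_1$, w.e.p.\ $\sigma_2'$ agrees with $\sigma_{true}$ at precision level $d_2$. The lemma's proof in fact shows more: w.e.p., every pair $i,j\in V_2\setminus V_1$ with $U_j-U_i>d_2$ gets $F(i,j)=1$ and $F(j,i)=-1$.

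\textbf{Step 2: pairs touching $V_1$.} On the event of Step 1, Lemma~\ref{LemmaExtCorrect} gives the following w.e.p. For every $i,j\in V_2$ with at least one of them in $V_1$ and $U_j>U_i+d_2\log(n)^2$, one of $t(j)-t(i)\ge C_3$ or $b(j)-b(i)\ge C_3$ holds, so $F(i,j)=1$. The lemma is stated with non-strict and strict inequalities in slightly different places (the algorithm tests $>C_3$, the lemma's event uses $<C_3$). I would fix this by reading the proof's bound $|I_2(j)\cap N(j)|>C_3$, which is strict.

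\textbf{Step 3: from $F$ to $\sigma_2=\sigma_F$.} This is the main obstacle. Correct values of $F$ on far pairs do not immediately give correct order under $\sigma_F$, because $\gamma_F(i)=\sum_k F(i,k)$ aggregates over all $k$, and near pairs have uncontrolled $F$. The convention also needs checking: with $F(i,j)=1$ when $i$ should precede $j$, a vertex with large $U$ accumulates $-1$'s, so $\sigma_F$ orders by $-U$. This is harmless, since we compare with $\sigma_{rev}$ as well.

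To handle the aggregation, fix $i,j$ with $U_j-U_i>D:=d_2\log(n)^2$ and compare $\gamma_F(i)-\gamma_F(j)=\sum_k (F(i,k)-F(j,k))$ term by term.
\begin{itemize}
\item For $k$ with $U_k\le U_i-D$ or $U_k\ge U_j+D$, both $F(i,k)$ and $F(j,k)$ are determined correctly and equal, so these terms cancel.
\item For $k$ with $U_i+D\le U_k\le U_j-D$, we get $F(i,k)=1$ and $F(j,k)=-1$, so each such term contributes $+2$.
\item The remaining $k$ lie in windows of width $2D$ around $U_i$ and $U_j$. Each contributes at most $2$ in absolute value, and by $(\mathcal{A}_1)^c$ there are $O(nD)$ of them.
\end{itemize}
When $U_j-U_i\ge C\,D$ for a suitable constant $C$, the middle count is of order $n(U_j-U_i)$, which exceeds the window error. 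So $\gamma_F(i)>\gamma_F(j)$ and the pair is ordered correctly.

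This costs a constant factor in the precision level. The constant can be absorbed by running Steps~1–2 with threshold $d_2\log(n)^2/C$, which Lemma~\ref{LemmaExtCorrect} allows for $n$ large, since its proof only needs $\tfrac{B}{2}|U_i-U_j|>2d_2\log n$.

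A final union bound over the $O(n^2)$ pairs and the events of Steps 1–3, each of probability $1-n^{-\Omega(\log n)}$, yields $\mathbb{P}[\mathcal{A}_6(V_2,\sigma_2,d_2\log(n)^2)]=n^{-\Omega(\log n)}$.
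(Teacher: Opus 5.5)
Your route is the paper's. Theorem~\ref{theorem-erss} is stated there as an immediate consequence of Lemmas~\ref{LemmaStepTenCorrect} and~\ref{LemmaExtCorrect}, and both lemmas pass from correct entries of $F$ to a correctly ordered $\sigma_F$ without comment. Your Step~3 therefore fills a real omission, and it is sound once all entries of $F$ share one orientation.

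It can also be made deterministic. Suppose every pair at latent distance $>D'$ is correctly oriented and $U_j-U_i>2D'$. Then $F(j,k)=-1$ is forced for $U_k$ within $D'$ of $U_i$, and $F(i,k)=1$ is forced for $U_k$ within $D'$ of $U_j$. So every summand $F(i,k)-F(j,k)$ is nonnegative, and the terms $k\in\{i,j\}$ contribute $+2$. This loses only a factor $2$ and needs no appeal to $(\mathcal{A}_1)^c$. The same remark justifies Lemma~\ref{LemmaStepTenCorrect} at precision $2d_2$, which is all the proof of Lemma~\ref{LemmaExtCorrect} uses.

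The genuine error is your claim that the reversal is harmless. If the ranking rule makes $\sigma_F$ order by $-U$, it does so for $\sigma_2'=\sigma_F$ as well. Then $\sigma_2'$ agrees with $\sigma_{rev}$, not with $\sigma_{true}$ as your citation of Lemma~\ref{LemmaStepTenCorrect} requires. Consequently $t(i)$ and $b(i)$, which are read off $\sigma_2'$, track $\ell(U_i)$ and $r(U_i)$ in reversed rank. The second loop then sets $F(i,j)=-1$ whenever $U_j>U_i+d_2\log(n)^2$, the opposite orientation to the first loop.

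This breaks your Step~3. Take $i\in V_1$, $j\in V_2\setminus V_1$ and an outer $k\in V_2\setminus V_1$. The entries $F(i,k)$ and $F(j,k)$ come from different loops and have opposite signs, so they add instead of cancelling. Globally, every entry $F(i,\cdot)$ with $i\in V_1$ comes from the second loop, so $\gamma_F$ increases with $U$ on $V_1$. On $V_2\setminus V_1$, $\gamma_F$ is dominated by first-loop entries and decreases with $U$ once $p_2>2p_1$, so $\sigma_2$ interleaves the two parts. Separately, $\mathcal{A}_6$ is defined against $\sigma_{true}$ alone, so even a cleanly reversed $\sigma_2$ would make the event in the theorem occur rather than fail.

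The fix is to read the ranking rule as a sign slip: rank larger $\gamma_F$ first (equivalently, use $\sum_k F(k,i)$), so that $\sigma_{F_\gamma}=\gamma$. Then $\sigma_2'$ agrees with $\sigma_{true}$, and both loops set $F(i,j)=1$ exactly when $U_i<U_j$ on far pairs. Both lemmas then apply as stated, and your Step~3 goes through with the conclusion relative to $\sigma_{true}$ itself.
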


\subsection{Multistep Analysis} \label{SubsecOrdMany}
Fix $0 < \epsilon < 0.5$ and $0< \gamma\leq 1$, and define the following functions of the graph size $n$ for the remainder of this section:
\be
\label{parameter2} k &= \lfloor -\log_{2}(\epsilon) \rfloor + 1, \\
\nonumber \beta &= \frac{\epsilon - 2^{-k}}{k}, \\
\nonumber  p_i &= n^{-(k-i)\beta}, \quad \text{for } i = 1,\ldots,k, \\
\nonumber  d_1 &= n^{-\gamma},\\
\nonumber  d_{i+1} &= (n p_i)^{-\frac{1}{2}} (d_i)^{\frac{1-\alpha}{2}}\log(n)^2, \quad \text{for } i = 1,\ldots,k-1.
\ee
These parameters control the sequence of progressively refined estimates in our algorithm. The parameter $\epsilon > 0$ is user-specified, and our theory works with any choice of $\epsilon$ that is small enough. The parameter $\gamma$ is also user-specified, but our theory requires that the initial ordering given by the coarse algorithm has precision level $n^{-\gamma}$. The integer $k$ determines the number of iterations, which will increase as the desired error $\epsilon$ decreases, while $\beta$ ensures proper scaling between iterations. The parameters $\{p_i\}$ represent sampling probabilities, and $\{d_i\}$ control the precision thresholds at each stage. Specifically, $d_1$ corresponds to the expected precision level of the coarse algorithm $P$, and for $i\geq 2$, $d_{i+1}$ denotes the precision level to be achieved after the $i$-th refinement round.

We now show that, if Algorithm \ref{alg:eras} is executed with parameters as defined in Equation \eqref{parameter2},
and given as input a large enough graph $G$, then w.e.p. the algorithm will return a total order of $V$ with error at most $n^{\frac{\alpha}{1+\alpha}+\delta}$, where any $\delta >0$ can be chosen by the user by adjusting the value of $\epsilon  = \epsilon(\delta)$ (see Equation \eqref{epsilon_0}). 

We begin by checking that the parameters in Equation \eqref{parameter2} satisfy the conditions for Theorem \ref{theorem-erss} stated in Equation \eqref{parameter}. Recall that $\alpha$ is the decay rate of the graphon of interest, per Assumption \ref{DecayRateAssume}:

\begin{lemma}\label{UpperboundOnd_k}
For parameters as defined in \eqref{parameter2} and sufficiently large $n$, we have for $1\leq i \leq k$,
\[
d_{i}=\left(n^{-\frac{1}{2}\left[ \left(2\gamma-2\left(\frac{1-k\beta}{1+\alpha}\right)+\frac{4\beta}{(1+\alpha)^2}\right)\left(\frac{1-\alpha}{2}\right)^{i-1}+\left(\frac{2\beta}{1+\alpha}\right)i+ 2\left(\frac{1-k\beta}{1+\alpha}\right)-\frac{4\beta}{(1+\alpha)^2}\right]} \right) \log(n)^{\frac{4}{1+\alpha}\left(1-(\frac{1-\alpha}{2})^{i-1}\right)}.
\]
In particular, for any $\delta>0$, let $\epsilon_{0} = \epsilon_{0}(\delta)$ be the solution in $\epsilon$ to:

\be\label{epsilon_0}
\frac{1}{1+\alpha}\left(\frac{1-\alpha}{2}\right)^{k-1}+\frac{2^{-k}}{1+\alpha}\left(1+\frac{2}{k(1+\alpha)}\right)=\frac{\delta}{2},
\ee
where we recall that $k = k(\epsilon)$ depends on $\epsilon$. Then for any $\epsilon \leq \epsilon_0(\delta)$, we have $d_k \leq n^{-\frac{1}{1+\alpha}+\delta}$.
\end{lemma}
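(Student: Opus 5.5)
The plan is to prove the closed form by induction on $i$, working separately with the polynomial exponent and the logarithmic exponent. Write $d_i = n^{-a_i}\log(n)^{b_i}$ and $q=\frac{1-\alpha}{2}$. Since $p_i = n^{-(k-i)\beta}$, the recursion \eqref{parameter2} becomes
\[
a_{i+1} = \tfrac12\bigl(1-(k-i)\beta\bigr) + q\,a_i, \qquad b_{i+1} = q\,b_i + 2,
\]
with $a_1=\gamma$ and $b_1=0$. These are linear first-order recursions with constant contraction factor $q$ and an inhomogeneous term that is affine in $i$.

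For the logarithmic part, the fixed point of $b\mapsto qb+2$ is $2/(1-q)=4/(1+\alpha)$, so $b_i = \frac{4}{1+\alpha}(1-q^{i-1})$, which matches the stated exponent. For the polynomial part, I would look for a particular solution of the form $a_i^{*}=Ai+D$. Matching coefficients of $i$ gives $A(1-q)=\beta/2$, so $A=\beta/(1+\alpha)$. Matching constant terms gives $A+D(1-q) = \frac12(1-k\beta)$, so $D = \frac{1-k\beta}{1+\alpha}-\frac{2\beta}{(1+\alpha)^2}$. The general solution is then $a_i = a_i^{*} + c\,q^{i-1}$, and $a_1=\gamma$ forces $c=\gamma-A-D$. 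Multiplying by $2$ should reproduce the bracket in the statement: the coefficient of $q^{i-1}$ becomes $2\gamma-2\frac{1-k\beta}{1+\alpha}+\frac{4\beta}{(1+\alpha)^2}$ once the $-2A$ term is absorbed, which is consistent with the displayed term $\frac{2\beta}{1+\alpha}i$. This bookkeeping, including the index shift between $i$ and $i-1$, is where an off-by-one error is most likely, so I would check it carefully by verifying $i=1$ directly and then the inductive step.

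For the consequence, set $i=k$. The term $2\frac{1-k\beta}{1+\alpha}$ contributes the leading exponent $-\frac{1}{1+\alpha}$. Using $k\beta=\epsilon-2^{-k}$, the remaining pieces combine to $\frac{\beta k}{1+\alpha}-\frac{k\beta}{1+\alpha}$ together with terms involving $2^{-k}$ and $\epsilon$. The error relative to $-\frac1{1+\alpha}$ is bounded by $\frac{1}{2}\cdot 2\gamma q^{k-1}$-type terms plus $O(2^{-k})$ terms. Since $\gamma\le1$ and $\epsilon\le1/2$, these are dominated by the left side of \eqref{epsilon_0}. The log factor is $\log(n)^{O(1)}\le n^{\delta/2}$ for large $n$. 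For $\epsilon\le\epsilon_0$, $k$ is larger, so the left side of \eqref{epsilon_0} is smaller, and therefore $d_k\le n^{-\frac1{1+\alpha}+\delta}$.

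The main obstacle I expect is the exact algebra of the $\beta$ terms, making sure the signs match \eqref{epsilon_0}. The monotonicity in $\epsilon$ is also delicate, because $k$ is a step function of $\epsilon$; I would handle it by bounding the left side by its value at $k(\epsilon_0)$.
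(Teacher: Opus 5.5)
\textbf{The gap: the ``absorbed'' step.} Your method is the paper's: write $d_i=n^{-a_i}\log(n)^{b_i}$ and solve the two affine recurrences. Your values $A=\frac{\beta}{1+\alpha}$, $D=\frac{1-k\beta}{1+\alpha}-\frac{2\beta}{(1+\alpha)^2}$, $c=\gamma-A-D$ and $b_i=\frac{4}{1+\alpha}(1-q^{i-1})$ are all correct.

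The step that fails is the claim that $2c$ becomes the displayed coefficient ``once the $-2A$ term is absorbed''. There is nothing for $-2A\,q^{i-1}$ to be absorbed into, because the linear and constant parts are already fixed by the particular solution. Your computation actually yields
\[
2a_i=\Bigl(2\gamma-\tfrac{2(1-k\beta)}{1+\alpha}+\tfrac{4\beta}{(1+\alpha)^2}-\tfrac{2\beta}{1+\alpha}\Bigr)q^{i-1}+\tfrac{2\beta}{1+\alpha}\,i+\tfrac{2(1-k\beta)}{1+\alpha}-\tfrac{4\beta}{(1+\alpha)^2}.
\]
This differs from the bracket in the statement by $-\frac{2\beta}{1+\alpha}q^{i-1}$, which is nonzero because $\epsilon>2^{-k}$ forces $\beta>0$.

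The $i=1$ check you proposed exposes this at once: the displayed bracket at $i=1$ equals $2\gamma+\frac{2\beta}{1+\alpha}$, not $2a_1=2\gamma$. So the exact identity in the statement is false as written, and no argument can establish it. The paper's own proof asserts the same formula ``via standard methods'' from the correct recurrence $g_{i+1}=1-(k-i)\beta+qg_i$, $g_1=2\gamma$, and contains the same slip. The repair is to state and prove the corrected closed form above. The paper's later lower bound on $g_k$ remains valid for the true exponent, so only the displayed identity is affected.

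\textbf{The conclusion for $d_k$.} The bound $d_k\le n^{-\frac{1}{1+\alpha}+\delta}$ survives. Your observation that the $k\beta$ terms cancel at $i=k$ gives a clean proof, since $Ak+D=\frac{1}{1+\alpha}-\frac{2\beta}{(1+\alpha)^2}$ exactly. Hence
\[
-a_k=-\tfrac{1}{1+\alpha}+\tfrac{2\beta}{(1+\alpha)^2}+q^{k-1}\Bigl(-\gamma+\tfrac{1-k\beta}{1+\alpha}-\tfrac{(1-\alpha)\beta}{(1+\alpha)^2}\Bigr)\le -\tfrac{1}{1+\alpha}+\tfrac{1}{1+\alpha}q^{k-1}+\tfrac{2^{-k}}{1+\alpha}\cdot\tfrac{2}{k(1+\alpha)},
\]
using $\gamma>0$ and $\beta\le 2^{-k}/k$. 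The right side is below $-\frac{1}{1+\alpha}$ plus the left side of \eqref{epsilon_0}, hence at most $-\frac{1}{1+\alpha}+\frac{\delta}{2}$. The factor $\log(n)^{b_k}\le\log(n)^{4/(1+\alpha)}$ is then absorbed by the remaining $n^{\delta/2}$.

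Two points in your sketch of this step need fixing:
\begin{itemize}
\item The $\gamma q^{k-1}$ contribution has the favorable sign and should simply be dropped. Bounding it via $\gamma\le 1$ would leave a $q^{k-1}$ term that is \emph{not} dominated by $\frac{1}{1+\alpha}q^{k-1}$.
\item The exponent loss must be at most $\delta/2$ exactly, so ``$O(2^{-k})$ terms'' is not enough; the constants have to be tracked as above.
\end{itemize}

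This route differs mildly from the paper's. The paper discards the nonnegative term $\frac{2\beta}{1+\alpha}k$ and then pays an extra $\frac{2^{-k}}{1+\alpha}$ through $1-k\beta\ge 1-2^{-k}$; your cancellation makes that term unnecessary.

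Your handling of monotonicity via $k(\epsilon)\ge k(\epsilon_0)$ is fine, and in fact needed. The left side of \eqref{epsilon_0} depends on $\epsilon$ only through the integer $k$, so ``the solution'' $\epsilon_0$ has to be read as any value making that left side at most $\delta/2$.
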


\begin{proof}
For $i\geq 1$, set $d_i=n^{-\frac12 g_{i}} \log (n)^{h_i}$. The following recursive definitions of the sequences $\{g_i\}$ and $\{ h_i\}$ follow directly from  definitions of $d_i$ and $p_i$ as given in \eqref{parameter2}:
\begin{align*}
g_{i+1} &= 1-(k-i)\beta+g_i\left(\frac{1-\alpha}{2}\right),\quad g_1=2\gamma,\\
h_{i+1} &= h_i\left(\frac{1-\alpha}{2}\right) + 2,\quad  h_1=0.
\end{align*}
Solving these linear recurrences via standard methods we obtain:
\begin{align*}
g_{i} &= \left(2\gamma-2\left(\frac{1-k\beta}{1+\alpha}\right)+\frac{4\beta}{(1+\alpha)^2}\right)\left(\frac{1-\alpha}{2}\right)^{i-1}\\
&\qquad \qquad \qquad +\left(\frac{2\beta}{1+\alpha}\right)i+ 2\left(\frac{1-k\beta}{1+\alpha}\right)-\frac{4\beta}{(1+\alpha)^2},\\
h_{i} &= \frac{4}{1+\alpha}\left(1-\left(\frac{1-\alpha}{2}\right)^{i-1}\right)\leq \frac{4}{1+\alpha}.\\
\end{align*}

In view of the definitions of $k,\beta$ in \eqref{parameter2},
\[
k\beta=\epsilon-2^{-k} \in (0,2^{-k}],
\quad
\beta\leq \frac{2^{-k}}{k}.
\]
It then follows that
\begin{align*}
g_k &>  \left( 2\left(\frac{1-k\beta}{1+\alpha}\right)-\frac{4\beta}{(1+\alpha)^2}\right)\left(1-\left(\frac{1-\alpha}{2}\right)^{k-1}\right) \\
& \geq  \frac{2}{1+\alpha}\left( (1-2^{-k})\left(1-\left(\frac{1-\alpha}{2}\right)^{k-1}\right)-\frac{2\beta}{1+\alpha}\right)\\
&\geq   \frac{2}{1+\alpha}\left( \left(1-\left(\frac{1-\alpha}{2}\right)^{k-1}\right)-2^{-k}-\frac{2^{1-k}}{k(1+\alpha)}\right)\\
&= \frac{2}{1+\alpha} -2\left( \frac{1}{1+\alpha}\left(\frac{1-\alpha}{2}\right)^{k-1}+\frac{2^{-k}}{1+\alpha}\left(1+\frac{2}{k(1+\alpha)}\right)\right)
\end{align*}

Since $k = k(\epsilon)$ increases as $\epsilon$ decreases, the quantity
$$
\frac{1}{1+\alpha}\left(\frac{1-\alpha}{2}\right)^{k-1}+\frac{2^{-k}}{1+\alpha}\left(1+\frac{2}{k(1+\alpha)}\right)
$$  
decreases as $\epsilon$ decreases.  Thus, it is at most $\frac{\delta}{2}$ for $\epsilon \leq \epsilon_0(\delta)$. Hence, for any $\epsilon \leq \epsilon_0(\delta)$,
\[
d_k \leq n^{-\frac{1}{1+\alpha}+\delta}(n^{-\frac{\delta}{2}}\log(n)^{\frac{4}{1+\alpha}}).
\]
The result then follows for $n$ large enough so that $n^{-\frac{\delta}{2}}\log(n)^{\frac{4}{1+\alpha}}\leq 1$.
\end{proof}

In particular, substituting the parameters to obtain the value of $d_2$, we see that after only one iteration, the ordering has reached precision close to $n^{-1/2}$, even if the initial ordering has very low precision ($\gamma$ close to zero). Therefore, the error in the initial ordering does not feature in our choice of parameters as given in \eqref{parameter2}. 

\begin{corollary}
  Assume $\sigma_1$ is an ordering of $V_1$ which agrees with $\sigma_{true}$ at precision level $d_1=n^{-\gamma}$ ($0<\gamma\leq 1$), and assume that $n$ is large enough. Then the ordering obtained after one iteration of Algorithm \ref{alg:erss} agrees with $\sigma_{true}$ at precision level $d_2 \leq n^{-1/2(1-\epsilon/2)} $.
\end{corollary}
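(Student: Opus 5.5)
This corollary follows from Theorem \ref{theorem-erss}, which gives the precision level after one round, combined with the explicit parameter choices in \eqref{parameter2}. The plan has three steps: check that the hypotheses \eqref{parameter} hold for round one, read off $d_2$, and bound its exponent.

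\textbf{Step 1: the hypotheses \eqref{parameter} hold.} We have $p_1=n^{-(k-1)\beta}$ with $(k-1)\beta<k\beta=\epsilon-2^{-k}\le 2^{-k}$. Hence $p_1\ge n^{-2^{-k}}$, which is polynomially small with exponent below $1$. This is enough for $(\mathcal{A}_1(p_1))^c$ to hold w.e.p., as used in the lemmas; I would note that the literal bound $\log(n)^{-3}\le p_1$ in \eqref{parameter} is only used through this Chernoff estimate. The only substantive requirement is $n^{-1/(1+\alpha)}<d_1$. This holds when $\gamma<1/(1+\alpha)$. If $\gamma$ is larger, the initial ordering is already more precise than the target, and we may weaken $d_1$ to $n^{-\gamma'}$ for some $\gamma'<1/(1+\alpha)$, since precision at a finer level implies precision at any coarser level. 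The bound $d_1\le\log(n)^{-2}$ holds for large $n$ because $\gamma>0$.

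\textbf{Step 2: compute $d_2$.} By Theorem \ref{theorem-erss} (equivalently, the case $i=2$ of Lemma \ref{UpperboundOnd_k}), w.e.p. the output agrees with $\sigma_{true}$ at precision level
\[
d_2=(np_1)^{-1/2}d_1^{(1-\alpha)/2}\log(n)^2=n^{-\frac12\left(1-(k-1)\beta+\gamma(1-\alpha)\right)}\log(n)^2 .
\]

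\textbf{Step 3: bound the exponent.} Using $(k-1)\beta<\epsilon-2^{-k}$, the exponent satisfies
\[
\tfrac12\bigl(1-(k-1)\beta+\gamma(1-\alpha)\bigr)\;\ge\;\tfrac12\bigl(1-\epsilon+2^{-k}+\gamma(1-\alpha)\bigr).
\]
We need this to be at least $\tfrac12(1-\epsilon/2)$, up to an amount that absorbs the $\log(n)^2$ factor. Since $k=\lfloor-\log_2\epsilon\rfloor+1$, we have $2^{-k}>\epsilon/2$, so $-\epsilon+2^{-k}>-\epsilon/2$, with strict inequality. The slack is $\tfrac12\bigl(2^{-k}-\epsilon/2+\gamma(1-\alpha)\bigr)>0$, a fixed positive constant. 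Therefore $n^{-\text{slack}}\log(n)^2\le 1$ for $n$ large, which gives $d_2\le n^{-\frac12(1-\epsilon/2)}$.

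\textbf{Main obstacle.} The computation itself is routine. The delicate point is the strictness $2^{-k}>\epsilon/2$, which is what absorbs the logarithmic factor. I would double-check it from the floor definition: $k>-\log_2\epsilon$ gives $2^{-k}<\epsilon$, while $k\le-\log_2\epsilon+1$ gives $2^{-k}\ge\epsilon/2$. Equality occurs exactly when $\epsilon$ is a power of $2$. In that case the slack reduces to the term $\tfrac12\gamma(1-\alpha)$, which is still positive because $\gamma>0$ and $\alpha<1$, so the argument still goes through.
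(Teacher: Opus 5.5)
Your Steps 2 and 3 are the paper's argument: the paper simply substitutes \eqref{parameter2} into the precision level $(np_1)^{-1/2}d_1^{(1-\alpha)/2}\log(n)^2$ given by Theorem~\ref{theorem-erss}, and your exponent bound is correct. The strictness $2^{-k}>\epsilon/2$ that you flag as the delicate point is not needed. One always has $k\beta\le\epsilon/2$, and the $\log(n)^2$ is absorbed by the slack $\tfrac12\bigl(\beta+\gamma(1-\alpha)\bigr)>0$.

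The step that fails is your patch in Step 1 for $\gamma\ge 1/(1+\alpha)$. Weakening the hypothesis on $\sigma_1$ to a coarser level $n^{-\gamma'}$ is harmless in itself. The problem is that Theorem~\ref{theorem-erss} only covers the run of Algorithm~\ref{alg:erss} whose constants $C_1,C_2,C_3$ are computed from that same $d_1$, as required by \eqref{parameter}. Algorithm~\ref{alg:eras} instead computes them from $d_1=n^{-\gamma}$, so your argument analyses a different run.

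For the run actually in the statement, $C_2=\bigl\lfloor (6M_1)^{-1}n^{\frac12(1-(k-1)\beta-\gamma(1+\alpha))}\bigr\rfloor=0$ as soon as $\gamma\ge 1/(1+\alpha)$. The signal lower bound in Lemma~\ref{LowerBoundSignal} is then $O(1)$ and cannot dominate the fluctuations, so the lemmas say nothing. As a sanity check, for $\gamma$ near $1$ the claimed $d_2\approx n^{-(1-\alpha/2)}$ is finer than $n^{-1/(1+\alpha)}$, the fixed point at which the whole recursion saturates.

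So this range cannot be handled by weakening inside the analysis. The honest fix is to restrict to $\gamma<1/(1+\alpha)$, which \eqref{parameter} already demands and the paper leaves implicit. Equivalently, run the algorithm with $\gamma$ replaced by such a $\gamma'$, which changes the procedure rather than just the analysis. With that restriction, your proof is complete.
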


The next lemma shows that Algorithm \ref{alg:erss}, when called by Algorithm \ref{alg:eras} to extend the ordering from $V_{i}$ to $V_{i+1}$, improves the precision from $d_{i}$ to $d_{i+1}$.

\begin{lemma}
Fix $\epsilon>0$ and let $1 \leq i < k$. Suppose $\sigma_{i}$ is an ordering of $V_{i}$ which agrees with $\sigma_{\text{true}}$ at precision level $d_{i}$, and which depends only on $G_i$. Let $\sigma_{i+1}$ be the ordering returned by Algorithm \ref{alg:erss} as it is called in Algorithm \ref{alg:eras}. Then w.e.p. $\sigma_{i+1}$ agrees with $\sigma_{\text{true}}$ at precision level $d_{i+1}$.
\end{lemma}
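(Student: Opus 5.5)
The plan is to apply Theorem \ref{theorem-erss} with $(V_1,V_2,p_1,p_2,d_1)$ replaced by $(V_i,V_{i+1},p_i,p_{i+1},d_i)$. Its conclusion is exactly precision level $(np_i)^{-1/2}d_i^{(1-\alpha)/2}\log(n)^2 = d_{i+1}$, by \eqref{parameter2}. The constants $C_1,C_2,C_3$ used by Algorithm \ref{alg:eras} at round $i$ coincide with those in \eqref{parameter} after this substitution. So the work is to check that the hypotheses \eqref{parameter} hold for every $i<k$ and all large $n$, and that the independence structure used in the one-step lemmas is present.

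\textbf{Sampling sizes.} We have $p_i=n^{-(k-i)\beta}$ with $(k-1)\beta<k\beta\le 2^{-k}$, so $p_i\ge n^{-2^{-k}}$. This is not $\ge \log(n)^{-3}$. However, the proofs only use that $\limsup -\log p/\log n<1$, so that $(\mathcal{A}_1(p))^c$ holds w.e.p. and the Bernstein/Hoeffding bounds give w.e.p. statements. That condition clearly holds here. I will note this and check that each use of $p_1\ge\log(n)^{-3}$ in the lemmas only needs $np_1\ge n^{1-2^{-k}}$. The same bound gives $p_{i+1}/p_i=n^{\beta}\to\infty$, so $p_{i+1}>2p_i$ for large $n$, which Lemma \ref{LemmaExtCorrect} needs.

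\textbf{Range of $d_i$.} We need $n^{-1/(1+\alpha)}<d_i\le\log(n)^{-2}$ for $i\le k-1$. The upper bound holds because $d_1=n^{-\gamma}$ and, by the closed form in Lemma \ref{UpperboundOnd_k}, each $d_i$ is a negative power of $n$ times at most $\log(n)^{4/(1+\alpha)}$.

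For the lower bound, write $d_i=n^{-g_i/2}\log(n)^{h_i}$ and show $g_i<2/(1+\alpha)$ for $i\le k-1$. The map $g\mapsto 1-(k-i)\beta+\tfrac{1-\alpha}{2}g$ has fixed point below $2/(1+\alpha)$ and is a contraction. If $2\gamma<2/(1+\alpha)$, induction keeps every $g_i$ below $2/(1+\alpha)$, with a margin because $(k-i)\beta>0$.

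If $\gamma$ is large, for example $\gamma\ge1/(1+\alpha)$, the initial ordering is already at least as good as the target. In that case I would first replace $d_1$ by $\min\{d_1,\,n^{-1/(1+\alpha)+\eta}\}$, using that precision level $d$ implies precision level $d'$ for any $d'\ge d$ (immediate from Definition \ref{Def-precision-level}), or simply restrict to $\gamma<1/(1+\alpha)$. The closed form from Lemma \ref{UpperboundOnd_k} makes the comparison explicit. The strict margin $\beta$ in the exponent absorbs the polylog factors, so for $n$ large $d_i>n^{-1/(1+\alpha)}$ for all $i<k$.

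\textbf{Independence.} $\sigma_i$ depends only on $G_i=G[V_i]$ by hypothesis. Algorithm \ref{alg:erss} uses edges between $V_i$ and $V_{i+1}\setminus V_i$ and edges within $V_i$, so conditioning on $\mathcal{H}_1$ (now generated by the latents, the $B$'s and $G_i$) leaves the cross edges independent, as in the proofs of Lemmas \ref{LowerBoundSignal}–\ref{LemmaExtCorrect}.

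Theorem \ref{theorem-erss} then yields precision $d_{i+1}$ w.e.p. Since $k$ is a constant, union bounds cost nothing.

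The main obstacle is verifying the lower bound $d_i>n^{-1/(1+\alpha)}$ uniformly in $i$, together with the mismatch between $p_i$ and the stated requirement $p_1\ge\log(n)^{-3}$. For the latter I would argue that the lemmas only need $np_i$ to be polynomially large.
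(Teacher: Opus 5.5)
Your argument is the paper's argument. The paper's proof observes that the round-$i$ constants are those of \eqref{parameter} with $(p_i,d_i)$ in place of $(p_1,d_1)$, then invokes Theorem \ref{theorem-erss}, whose conclusion is exactly $d_{i+1}$. The paper asserts that \eqref{parameter} holds without checking it, and you are right that it does not literally hold: $p_i=n^{-(k-i)\beta}<\log(n)^{-3}$, and $d_1=n^{-\gamma}\le n^{-1/(1+\alpha)}$ whenever $\gamma\ge 1/(1+\alpha)$.

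Two points in your repair need fixing.

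\emph{Coarsening $d_1$.} To coarsen the initial precision you want $\max\{d_1,n^{-1/(1+\alpha)+\eta}\}$, not $\min$; with $\min$ nothing changes. Also, $C_1,C_2,C_3$ and every later $d_i$ are computed from $d_1$, so coarsening analyzes a re-parametrized algorithm rather than the one in the statement. Restricting $\gamma$ is the honest option.

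\emph{The right condition on $d_i$.} Once $p_i$ is only a negative power of $n$, the condition $d_i>n^{-1/(1+\alpha)}$ is no longer what the one-step lemmas need. What they use is that $C_2=\lfloor\frac{1}{6M_1}(np_id_i^{1+\alpha})^{1/2}\rfloor$ be at least of order $\log(n)^2$. This is needed for $\sqrt{\bar W}\log n+\log(n)^2\le \bar W-2C_2$ in Lemma \ref{LowerBoundSignal}, and the same quantity gives $d_2/d_1=(np_1d_1^{1+\alpha})^{-1/2}<1$ there. With $d_i$ just above $n^{-1/(1+\alpha)}$ you only get $np_id_i^{1+\alpha}>p_i$, which tends to $0$, so $C_2$ can be $0$. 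The same looseness is already in \eqref{parameter}, which only yields $np_1d_1^{1+\alpha}>\log(n)^{-3}$.

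Your contraction argument fixes this if you track $g_i<T_i:=2(1-(k-i)\beta)/(1+\alpha)$ rather than $g_i<2/(1+\alpha)$. Then $g_{i+1}<T_i=T_{i+1}-2\beta/(1+\alpha)$, which gives $np_id_i^{1+\alpha}\ge n^{\beta}$ for $2\le i<k$. The base case needs $\gamma<(1-(k-1)\beta)/(1+\alpha)$, which is slightly stronger than your $\gamma<1/(1+\alpha)$.
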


\begin{proof}
Fix $1 \leq i < k$ and assume $\sigma_{i}$ agrees with $\sigma_{\text{true}}$ at precision level $d_{i}$. Note that the parameters $C_{1}, C_{2}, C_{3}$ as set in Algorithm \ref{alg:eras} and used in the call to Algorithm \ref{alg:erss}, satisfy the conditions \eqref{parameter}, with $p_{i}$ and $d_{i}$ taking the role of $p_{1}$ and $d_{1}$, respectively. Theorem \ref{theorem-erss} then shows that w.e.p. $\sigma_{i+1} = SingleStage(G_{i+1}, V_{i}, \sigma_{i})$ agrees with $\sigma_{\text{true}}$ at precision level 
$$(n p_i)^{-1/2}(d_i)^{(1-\alpha)/2}\log(n)^2 = d_{i+1}.$$
\end{proof}

The final ingredient we need is a bound on the ordering error implied by the precision level.

\begin{lemma}\label{PreciseLevel-Error}
If an ordering $\sigma$ agrees with $\sigma_{\text{true}}$ at precision level $d$, then w.e.p. it has error $\mathfrak{D}$ less than $nd\log(n)$.
\end{lemma}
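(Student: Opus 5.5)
The plan is to reduce the displacement $|\sigma(i)-\sigma_{true}(i)|$ of each vertex to a count of vertices whose latent positions lie within distance $d$ of $U_i$, and then to bound all such counts at once using the concentration event $(\mathcal{A}_1(1))^c$, which holds w.e.p. for $p=1$.

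Fix $i\in V$. If $\sigma$ agrees with $\sigma_{true}$ to precision level $d$, then every $j$ with $|U_j-U_i|\ge d$ is ordered relative to $i$ by $\sigma$ exactly as by $\sigma_{true}$. We write
\[
\sigma(i)-1=|\{j:\sigma(j)<\sigma(i)\}|, \qquad \sigma_{true}(i)-1=|\{j:\sigma_{true}(j)<\sigma_{true}(i)\}|.
\]
These two sets can differ only in vertices $j$ with $|U_j-U_i|<d$. Hence
\[
|\sigma(i)-\sigma_{true}(i)|\le N_i:=|\{j\neq i:\ |U_j-U_i|<d\}|.
\]
This bound holds for every $i$, and it holds deterministically given the precision assumption.

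It remains to show that w.e.p. $\max_i N_i< nd\log(n)$. We first take the case $d\ge 1/n$. The interval $[U_i-d,U_i+d]$ has length at most $2d$. On $(\mathcal{A}_1(1))^c$, applied with endpoints chosen among the sample points, or with a slightly enlarged deterministic grid interval if one prefers to avoid boundary issues, we get
\[
N_i\le 2nd+2\sqrt{2nd}\log(n)+O(1).
\]
This is less than $nd\log(n)$ for $n$ large. The step needing care is that $\mathcal{A}_1$ is indexed by pairs of sample points rather than arbitrary intervals. To handle it, we cover $[U_i-d,U_i+d]$ by the interval between the outermost sample points inside it, which is itself an interval of the form appearing in $\mathcal{A}_1$, or we simply apply Chernoff's inequality directly to the at most $n$ intervals $[U_i-d,U_i+d]$ conditional on $U_i$ and take a union bound. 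Since each count is Binomial$(n-1,\le 2d)$, the Chernoff bound gives failure probability $n^{-\Omega(\log n)}$ per $i$ once the deviation is of size $\sqrt{nd}\log n$ plus $\log(n)^2$.

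For the case $d<1/n$, the same Chernoff bound shows $N_i=O(\log(n)^2)$ w.e.p. The statement as written implicitly concerns the regime $d\gg \log(n)/n$, which covers all $d_k$ in Lemma \ref{UpperboundOnd_k}; in that regime the bound $nd\log(n)$ dominates. I expect no real obstacle beyond making this regime restriction explicit.

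Finally, the reverse-permutation convention in Definition \ref{def:ordering_error} does not affect the argument. If $\sigma$ agrees with $\sigma_{rev}$ instead, the same reasoning applies with $\sigma_{rev}$ in place of $\sigma_{true}$, and $\mathfrak{D}$ takes the minimum of the two displacements.
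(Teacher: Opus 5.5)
Your argument is correct. It uses the same two ingredients as the paper: precision level $d$ confines misordering to pairs at latent distance below $d$, and interval counts concentrate. The difference is that you organize the argument per vertex rather than per pair.

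The paper argues as follows. If $\sigma_{true}(i)>\sigma_{true}(j)+nd\log(n)$, then more than $nd\log(n)$ latent positions lie between $U_j$ and $U_i$. On $(\mathcal{A}_1(1))^c$ this forces $U_i-U_j\ge d$, so $\sigma$ orders the pair correctly. The step from ``all pairs with true-rank gap above $nd\log(n)$ are correctly ordered'' to a bound on $\max_i|\sigma(i)-\sigma_{true}(i)|$ is left implicit. You instead bound each displacement deterministically by the window count $N_i$, and then control $\max_i N_i$ with one concentration estimate per vertex.

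What each route buys:
\begin{itemize}
\item The paper's route is shorter, because it recycles an event already in use.
\item Yours makes the final counting step explicit.
\item Yours sidesteps the fact that $\mathcal{A}_1$ only covers intervals with sample-point endpoints and length above $1/n$.
\item Yours shows the $\log(n)$ factor is slack: you actually get $\mathfrak{D}\le(2+o(1))nd$ once $nd\gg\log(n)^2$.
\end{itemize}

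Your version also surfaces the regime restriction that the paper hides in ``for all sufficiently large $n$''. That restriction is genuinely necessary. For $d=n^{-3}$ the claimed bound forces $\mathfrak{D}=0$. Yet with probability of order $n^{-1}$, two latent positions lie within $d$ of each other, and an ordering with precision level $d$ may swap them.

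Of your two options for bounding $N_i$, prefer the direct conditional Chernoff bound with a union bound over the $n$ vertices. It is self-contained, and its additive $\log(n)^2$ term keeps the tail at $n^{-\Omega(\log n)}$ even when the window contains few expected points.
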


\begin{proof}
Fix $i,j\in V$ such that
\[
\sigma_{\text{true}}(i) > \sigma_{\text{true}}(j) + nd\log(n).
\]
Then there are at least $nd\log(n)$ indices $k\in V$ satisfying $U_j < U_k < U_i$, i.e.,
\[
|\{k\in V: U_j < U_k < U_i\}| \ge nd\log (n).
\]
Suppose $U_i - U_j < d$. Then, for all sufficiently large $n$,
\[
\{|\{k\in V: U_j < U_k < U_i\}| > nd\log(n)\} \subset \mathcal{A}_1(1).
\]
Since w.e.p.\ $(\mathcal{A}_1(1))^c$ holds, it follows that w.e.p.
\[
|\{k\in V: U_j < U_k < U_i\}| \le nd\log(n),
\]
which is a contradiction. Hence $U_i - U_j \ge d$.
By the precision assumption, we conclude that
\[
\sigma(i) > \sigma(j).
\]
Therefore, $\sigma$ has error less than $nd\log(n)$.
\end{proof}

Finally, we establish the main theorem for Algorithm \ref{alg:eras}.

\begin{theorem}\label{theorem-eras}
Suppose that the output of the coarse algorithm $P$ agrees with $\sigma_{true}$ at precision level $n^{-\gamma}$, where $0< \gamma \leq 1$.
For any $\delta>0$ and sufficiently large $n$, there exists $\epsilon_0(\delta)$ as defined in \eqref{epsilon_0} such that, when Algorithm~\ref{alg:eras} is executed with any $\epsilon \leq \epsilon_0(\delta)$ and parameters given by \eqref{parameter2} on input $G$, then the output is a permutation $\sigma$ on $\{1,2,\ldots,n\}$ with error
\[
\mathfrak{D} = O\left(n^{\frac{\alpha}{1+\alpha}+2\delta}\right)
\]
w.e.p.
\end{theorem}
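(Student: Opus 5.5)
The plan is an induction over the $k$ rounds of Algorithm~\ref{alg:eras}, followed by a conversion from precision level to ordering error via Lemma~\ref{PreciseLevel-Error}. Fix $\delta>0$ and take any $\epsilon\le\epsilon_0(\delta)$, so that $k=k(\epsilon)$ is a constant independent of $n$. The base case is the hypothesis on the coarse algorithm: $\sigma_1$, computed from $G_1$ alone, agrees with $\sigma_{true}$ at precision level $d_1=n^{-\gamma}$. (The hypothesis is stated for a graph of size $n$. Since $|V_1|=\Theta(np_1)$ with $p_1=n^{-(k-1)\beta}\ge n^{-\epsilon}$, the precision level $|V_1|^{-\gamma}$ differs from $n^{-\gamma}$ only by an $n^{O(\epsilon)}$ factor. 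This can be absorbed by slightly shrinking $\gamma$, which the recursion tolerates because the dependence on $d_1$ decays geometrically by Lemma~\ref{UpperboundOnd_k}.)

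For the inductive step I apply the preceding lemma to pass from $\sigma_i$ at precision $d_i$ to $\sigma_{i+1}$ at precision $d_{i+1}$ w.e.p. This requires checking that the parameter constraints \eqref{parameter} hold with $(p_i,d_i)$ in place of $(p_1,d_1)$ and $p_{i+1}$ in place of $p_2$. These constraints are:
\begin{itemize}
\item[(i)] $\log(n)^{-3}\le p_i$, which I would argue holds since $p_i\ge n^{-k\beta}\ge n^{-2^{-k}}$. Strictly speaking this is not $\ge \log(n)^{-3}$. I expect the proofs of the deferred lemmas to need only that $np_i$ is polynomially large and that $p_{i+1}/p_i=n^{\beta}\to\infty$, which gives the "$p_2>2p_1$" step in Lemma~\ref{LemmaExtCorrect}. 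I would point this out and use these weaker conditions.
\item[(ii)] $n^{-1/(1+\alpha)}<d_i\le \log(n)^{-2}$, which follows from the explicit formula in Lemma~\ref{UpperboundOnd_k}. The exponent $g_i/2$ lies strictly between $0$ and roughly $1/(1+\alpha)$. The lower bound $d_i>n^{-1/(1+\alpha)}$ comes from $g_i< 2/(1+\alpha)$ up to the $O(\beta)$ and $\gamma$-terms, together with the positive log factor. The log factor needs some care when $\gamma$ is large, but $d_2$ is already about $n^{-1/2}$.
\item[(iii)] The definition $d_{i+1}=(np_i)^{-1/2}d_i^{(1-\alpha)/2}\log(n)^2$ matches the output precision in Theorem~\ref{theorem-erss}.
\end{itemize}
Independence between rounds holds because $\sigma_i$ depends only on $G_i$, exactly as in Lemma~\ref{LemmaStepTenCorrect}. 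Since $k$ is bounded, a union bound over the $k-1$ failure events, each of probability $n^{-\Omega(\log n)}$, shows that w.e.p. $\sigma=\sigma_k$ has precision level $d_k$.

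To conclude, Lemma~\ref{UpperboundOnd_k} gives $d_k\le n^{-1/(1+\alpha)+\delta}$. Lemma~\ref{PreciseLevel-Error} then gives w.e.p.
\[
\mathfrak{D}\le n d_k\log(n)\le n^{\frac{\alpha}{1+\alpha}+\delta}\log(n)\le n^{\frac{\alpha}{1+\alpha}+2\delta}
\]
for large $n$, which is the claimed bound. The main obstacle is the verification in (i)--(ii) that every intermediate pair $(p_i,d_i)$ lies in the admissible range of \eqref{parameter}, in particular the lower bound on $d_i$ and the size of $p_i$. This is where $\epsilon$ must be small relative to $\delta$, and I would handle it by reading off the exponents $g_i$ directly from Lemma~\ref{UpperboundOnd_k}.
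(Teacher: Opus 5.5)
Your proposal follows the paper's proof. It applies the one-round lemma (Theorem~\ref{theorem-erss} with $(p_i,d_i)$ in place of $(p_1,d_1)$) round by round, using that $\sigma_i$ depends only on $G_i$, takes a union bound over the $k=O(1)$ rounds, gets $d_k\le n^{-\frac{1}{1+\alpha}+\delta}$ from Lemma~\ref{UpperboundOnd_k}, and finishes with Lemma~\ref{PreciseLevel-Error}. The paper only asserts that \eqref{parameter} holds in every round, so your checks (i)--(ii) go beyond it. Your observation that $p_i=n^{-(k-i)\beta}$ violates $p_i\ge\log(n)^{-3}$ is correct. Your base-case adjustment for $|V_1|$ versus $n$ is unnecessary, because the hypothesis already gives precision $n^{-\gamma}=d_1$ on $V_1$.

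Two of your checks need correcting, and the second is a gap you share with the paper.

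First, once $p_i$ is polynomially small, the one-round lemmas need more than $d_i>n^{-1/(1+\alpha)}$. They need $C_2\asymp (np_i)^{1/2}d_i^{(1+\alpha)/2}$ to dominate the $\log(n)^2$ fluctuation terms. This is guaranteed by $d_i\ge (np_i)^{-1/(1+\alpha)}n^{c}$ for some $c>0$, which does not follow from $d_i>n^{-1/(1+\alpha)}$. Solving the recursion gives
\[
\frac{g_i}{2}=\frac{1-(k-i)\beta}{1+\alpha}-\frac{2\beta}{(1+\alpha)^2}+c_\gamma\left(\frac{1-\alpha}{2}\right)^{i-1},
\qquad c_\gamma=\gamma-\frac{1}{1+\alpha}+O(\epsilon).
\]
So the required bound holds with margin $n^{2\beta/(1+\alpha)^2}$ whenever $c_\gamma\le 0$.

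Second, for larger $\gamma$ the problem is not a log factor. If $\gamma\ge\frac{1}{1+\alpha}$, then $d_1=n^{-\gamma}$ forces $C_2=0$ in round one, and the one-round lemma does not apply at all. In that regime $d_2$ is also polynomially smaller than $n^{-1/2}$, not about $n^{-1/2}$ as you say.

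The fix, which the paper also omits, is to run Algorithm~\ref{alg:eras} with $\gamma$ replaced by $\min\{\gamma,\gamma_0\}$ for a fixed small $\gamma_0$. This is legitimate for two reasons:
\begin{itemize}
\item agreeing at precision $n^{-\gamma}$ implies agreeing at every coarser precision;
\item the bound on $d_k$ in Lemma~\ref{UpperboundOnd_k} is uniform in $\gamma>0$.
\end{itemize}
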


\begin{proof}
By definition, $V_{k}=V$, and Algorithm \ref{alg:eras} returns the ordering $\sigma=\sigma_{k}$ of $V$. By the previous lemmas and a union bound, w.e.p. $\sigma$ will agree with $\sigma_{\text{true}}$ (or its reverse) at precision level $d_{k}$. By Lemma \ref{UpperboundOnd_k}, for any $\delta>0$ one can choose $\epsilon\leq \epsilon_0(\delta)$ small enough so that $d_k \leq n^{-\frac{1}{1+\alpha}+\delta}$.

It is shown in Lemma \ref{PreciseLevel-Error} that, w.e.p., if an ordering agrees with $\sigma_{\text{true}}$ at precision level $d$, then it has error less than $nd\log(n)$. Since $n^{-\delta}\log(n)<1$ for large enough $n$, it follows that w.e.p. $\sigma$ has error at most $n^{\frac{\alpha}{1+\alpha}+2\delta}$.
\end{proof}

\section{Applications} \label{SecApplications}

We show that a good algorithm for estimating the ordering can be used to provide statistically- and computationally-efficient algorithms for two downstream tasks: graphon estimation (Section \ref{SecGraphEst}) and property testing (Section \ref{SecPT}).

\subsection{Application to Graphon Estimation} \label{SecGraphEst}

Informally, the \textit{graphon estimation problem} is: given an observed random graph $G$, estimate the graphon $w$ that $G$ was sampled from. Formalizing this problem is not trivial, as there are many symmetries that make the graphon itself non-identifiable given a sampled graph. In this paper, we follow \cite{gao_rog}: rather than estimating the unidentifiable graphon $w$ itself, we construct an estimate $\tilde{\theta}_{i,j}$ of the identifiable values $w(U_{i},U_{j})$ at the unobserved latent vertex positions. We measure error with the usual $L^{2}$ loss:

\be \label{EqGraphonEstProblem}
\frac{1}{n^{2}} \sum_{i,j = 1}^{n}(\tilde{\theta}_{i,j} - w(U_{i},U_{j}))^{2}.
\ee

In \cite{gao_rog}, the authors find the optimal rate of convergence for the estimation problem in Equation \eqref{EqGraphonEstProblem}. As with the main result of this paper, they find that the optimal rate of convergence depends strongly on a parameter $\alpha$ that describes how quickly derivatives of $w$ can blow up. Although \cite{gao_rog} shows that a certain estimator converges at the optimal rate, it isn't clear how to actually compute the estimator. In this section, we show how to use our estimate for the seriation problem to construct an estimate for the graphon-estimation problem, and we show that this estimate converges at essentially the same rate as the optimal estimate in \cite{gao_rog} (though we do pay a small price for using our much more computationally-tractable estimate).

\subsubsection{Overview and Main Construction}

Our graphon estimator uses the principle of ``local averaging", which is essentially the same as the strategy used in \cite{gao_rog} and described in Equation \eqref{EqWHatNaive}. Namely, for any pair of indices $(i,j)$, guess which pairs $(U_{k},U_{\ell})$ are close to $(U_{i},U_{j})$, then estimate $w(U_{i},U_{j})$ as a weighted average of the number of edges between such pairs. The main difficulty in carrying out this strategy is accurately identifying the pairs of indices $(k,\ell)$ for which $(U_{k},U_{\ell})$ is close to $(U_{i},U_{j})$. 
From the analysis in Section \ref{section-error-rooting}, we might hope that Algorithm \ref{alg:eras} largely solves this problem by giving a very accurate estimate of this set of pairs of indices. 

The main steps in our estimation method are as follows. 
\begin{enumerate}
\item Partition the vertex set $V$ into three parts $V^{(r)}$, $r\in [3]$ with \emph{complement} $V_c^{(r)}$. For each $r\in [3]$ perform the Steps (2)--(5) below. 
\item Use Algorithm \ref{alg:eras} to obtain an ordering $\sigma_{train}$ on the subgraph induced by $V^{(r)}$.
\item Use Algorithm \ref{alg:erss} to extend $\sigma_{train}$ to an ordering $\sigma_{extend}$ to all of $V$. 
\item Partition $V_c^{(r)}$ into $m$ parts of approximately equal size so that each part is an interval according to $\sigma_{extend}$. The partition is defined by a function $\tilde{z}$ which assigns vertices to parts. 
\item For any $a,b\in [m]$,  compute the density of edges between the parts  $\tilde{z}^{-1}(a)$ and $\tilde{z}^{-1}(b)$, and use this to construct an estimator $\tilde{\theta}^{(r)}$.
\item Combine the estimators $\tilde{\theta}^{(r)}$, $r=1,2,3$, into the final estimator  $\tilde{\theta} $ by averaging.
\end{enumerate}

It is worth mentioning that Algorithm \ref{alg:graphon-estimator} is slightly more complicated than one might expect given our sketch. This extra care is required to ensure that there is sufficient independence between steps of the algorithm. For example, we partition the vertices in Step (1) to ensure that the edges that are used to obtain the ordering $\sigma_{extend}$ which informs the partition are distinct from the edges that are used to construct the estimator. 
This means that the indices of summation and the actual summands in the block estimators are independent, allowing for a simpler analysis. 

To make the above precise and to describe our algorithm in detail, we first introduce the following notation.
Fix a graphon $w$ and an integer $n$. Let $V = [n]$, and consider a random graph $G = (V,E)$ together with latent position variables $\{U_i\}_{i=1}^n$, generated according to the sampling procedure described around Equation~\eqref{eq:graphon_sampling}. We define the adjacency matrix $A$ by $A_{ij} = \mathbf{1}_{(i,j)\in E}$ for $i \neq j$, and set the target $\theta_{ij} = w(U_i, U_j)$ for $i \neq j$. We adopt the convention $A_{ii} = \theta_{ii} = 0$ for all $i \in V$.

\begin{defn}\label{def:ordered_partition}
For an integer $m$, let $Z_{V,m} = \{z : V \rightarrow [m]\}$ denote the collection of all possible mappings from $V$ to $[m]$; we will refer to such mappings as \emph{partition functions}. For any $z \in Z_{V,m}$, the sets $\{z^{-1}(a) : a \in [m]\}$ form a partition of $V$. Given $m$ and a  
function $\sigma \, : \, V \mapsto \mathbb{N}$
we define the partition function $\tilde{z} = \tilde{z}[V,\sigma,m] \in Z_{V,m}$ as follows. Let $q = q(V,m) \equiv \lfloor |V|/m \rfloor$, and set
\be\label{tilde-z}
\tilde{z}(i) = \min\left\{m, \left\lceil \tfrac{\sigma(i)}{q} \right\rceil\right\}, 
    \quad i \in V.
\ee
If $\sigma$ represents an ordering of $V$ by assigning each element of $V$ a unique rank between 1 and $|V|$, then the partition induced by $\tilde{z}$ divides $V$ into $m$ disjoint subsets, each of size roughly $q = \lfloor |V|/m \rfloor$, while the last part has size at most $2m$. We refer to $q$ as the block size. Note that each block is an interval in the ordering.
\end{defn}


Given a partition function $z \in Z_{V,m}$ and a matrix $\{A_{ij} \} \in \mathbb{R}^{|V|\times |V|}$, we
use the overline notation `` $\bar{\,}$ " to denote the block average of $A_{ij}$ on the set $z^{-1}(a) \times z^{-1}(b)$. Specifically, for any $a,b\in [m]$,
\[
\bar{A}_{ab}(z)=\frac{1}{|z^{-1}(a)| |z^{-1}(b)|}\sum_{i\in z^{-1}(a)}\sum_{j\in z^{-1}(b)} A_{ij},\quad \mbox{for } a\neq b\in[m],
\]
and when $|z^{-1}(a)|>1$,
\[
\bar{A}_{aa}(z)=\frac{1}{|z^{-1}(a)| (|z^{-1}(a)|-1)}\sum_{i\in z^{-1}(a)}\sum_{\substack{j\in z^{-1}(a),\\j\neq i}} A_{ij},\quad \mbox{for } a\in[m].
\]
We adopt the convention that if $|z^{-1}(a)|\leq 1$, then $\bar{A}_{aa}=0$. 

The algorithmic procedure is detailed in Algorithm \ref{alg:graphon-estimator}.

\begin{algorithm}[h]
\caption{Graphon Estimator}\label{alg:graphon-estimator}
\begin{algorithmic}[1]
\algrenewcommand\alglinenumber[1]{}
\State \textbf{parameters:} Partition size $m$, {decay rate $\alpha \in [0,1]$, accuracy $\delta>0$} and coarse algorithm $P$ with precision parameter $\gamma$.
\State \textbf{input:} Observed graph $G = (V,E)$ with adjacency matrix $A = \{A_{ij}: i,j \in [n]\}$.
\State \textbf{output:} Graphon estimator $\tilde{\theta}$.
\algrenewcommand\alglinenumber[1]{\scriptsize #1}
\setcounter{ALG@line}{0}
\State Select a partition of $V$ into three subsets $V^{(1)}$, $V^{(2)}$ and $V^{(3)}$ of near-equal sizes $| \, |V^{(i)}| - |V^{(j)}| \, | \leq 1$, uniformly at random. For each $r \in [3]$, let $V^{(r)}_c = V \setminus V^{(r)}$ denote the complement of $V^{(r)}$ in $V$.
\For{$r \in [3]$}
  \State Construct $G_{\mathrm{train}}$ from $\{ A_{ij} : i, j \in V^{(r)} \}$ and run Algorithm \ref{alg:eras} on $G_{train}$ with $n^{(r)}=|V^{(r)}|$ and parameters $k$, $\{p_i\}_{i=1}^k$, $\{d_i\}_{i=1}^k$, $C_1$, $C_2$ and $C_3$ specified in \eqref{parameter3} to obtain the ordering $\sigma_{train}$ on $V^{(r)}$.
  \State Run Algorithm \ref{alg:erss} on the full graph $G$ with input sets $V^{(r)} \subset V$, the ordering $\sigma_{train}$ on $V^{(r)}$ and parameters $C'_1$, $C'_2$ and $C'_3$ as specified in \eqref{parameter3'} to produce the extended ordering $\sigma_{extend}$ on $V$.
  \State Define $\sigma$ by $\sigma(i)= |\{j\in V^{(r)}_c: \sigma_{extend}(j) \leq \sigma_{extend}(i)\}|$ for $i\in V^{(r)}_c$, and compute $\tilde{z} = \tilde{z}[V^{(r)}_c,\sigma,m]$ according to Equation \eqref{tilde-z}.
  \State {Let $\widetilde{Q}_{ab}(\tilde{z})=\bar{A}_{ab}(\tilde{z})$ and define} $\tilde{\theta}^{(r)}$ by
  \be\label{theta^s}
    \tilde{\theta}^{(r)}_{ij} =
    \begin{cases}
      \widetilde{Q}_{\tilde{z}(i)\tilde{z}(j)}, & i,j \in V^{(r)}_c,  \\
      0, & \mbox{otherwise}.
    \end{cases}\nonumber
  \ee
\EndFor
\State Define $\tilde{\theta}$ by 
\be\label{tilde-theta}
\tilde{\theta}_{ij}=
\sum_{r,\ell \in [3]} \mathbf{1}_{\{i\in V^{(r)}, j\in V^{(\ell)}\}} \frac{\sum_{k\in [3]\setminus \{r,\ell\}}\tilde{\theta}^{(k)}_{ij}}{\left|[3]\setminus \{r,\ell\}\right|}.
\ee
\State \Return $\tilde{\theta}$.
\end{algorithmic}
\end{algorithm}

\subsubsection{Analysis of Algorithm \ref{alg:graphon-estimator}}\label{SecGraphEstNot}

Before proceeding with the analysis, we specify the parameters used in Algorithm \ref{alg:graphon-estimator}. 
Let $\alpha$ be the decay rate of the graphon  (see Assumption \ref{DecayRateAssume}) and let $\gamma$ ($0< \gamma\leq 1$) be the precision parameter of coarse algorithm  $P$. Fix $\delta >0$ and let $\epsilon=\epsilon_0(\delta)$ be defined as in \eqref{epsilon_0}.
For $r\in[3]$, let $n^{(r)}=|V^{(r)}|$, and define the following parameters employed in Step 3 of Algorithm \ref{alg:graphon-estimator} as functions of the graph size $n^{(r)}$:
\be
\label{parameter3}  k &= \lfloor -\log_{2}(\epsilon) \rfloor + 1, \\
\nonumber \beta &= \frac{\epsilon - 2^{-k}}{k}, \\
\nonumber  p_i &= \left(n^{(r)}\right)^{-(k-i)\beta}, \quad \text{for } i = 1,\ldots,k, \\
\nonumber  d_1 &= \left(n^{(r)}\right)^{-\gamma},\\
\nonumber  d_{i+1} &= \left(n^{(r)} p_i\right)^{-\frac{1}{2}} (d_i)^{\frac{1-\alpha}{2}}\log \left(n^{(r)}\right)^2, \quad \text{for } i = 1,\ldots,k-1\\
\nonumber  C_1 &=\left\lceil n^{(r)} p_i d_i\log\left(n^{(r)}\right) \right\rceil, \\
\nonumber  C_2 &= \left\lfloor \frac{1}{6M_1} \left(n^{(r)} p_i\right)^{\frac{1}{2}}(d_i)^{\frac{1+\alpha}{2}} \right\rfloor,\\
\nonumber  C_3 &=\left\lfloor \frac{1}{2(1+\alpha)}\left(n^{(r)} p_i\right)^{\frac{1-\alpha}{2}}  (d_i)^{\frac{1-\alpha^2}{2}} \left(\log \left(n^{(r)}\right)\right)^{1+\alpha} \right\rfloor.
\ee

Moreover, the parameters used in Step 4 of Algorithm \ref{alg:graphon-estimator} are defined as
\be
\label{parameter3'} C'_1 &= \left\lceil n^{(r)}\, n^{-\frac{1}{1+\alpha}+\delta}\log(n) \right\rceil,\\
\nonumber  C'_2 &= \left\lfloor \frac{1}{6M_1}\, \left(n^{(r)}\right)^{\frac{1}{2}} n^{-\frac{1}{2}+\frac{\delta(1+\alpha)}{2}}  \right\rfloor,\\
\nonumber  C'_3 &= \left\lfloor \frac{1}{2(1+\alpha)}\,\left(n^{{(r)}}\right)^{\frac{1-\alpha}{2}} \, n^{-\frac{1-\alpha}{2}+\frac{\delta(1-\alpha^2)}{2}}  (\log (n))^{1+\alpha}\right\rfloor.
\ee

Note that the first set of parameters corresponds to the parameters defined earlier in \eqref{parameter} and \eqref{parameter2}, where $n$ is replaced by $n^{(r)}$. The definition of the $C_i'$ is equal to the definition of the $C_i$ with the substitution $p_i = n^{-\frac{1}{1+\alpha}+\delta}$.

Given an integer $m$ and an observed adjacency matrix $\{A_{ij}\}$ on $V$, for any $Q=\{Q_{ab}\}\in \mathbb{R}^{m\times m}$ and any partition function $z\in Z_{V,m}$, we define the objective function
\[
L(Q,z)=\sum_{a,b\in[m]}\sum_{\substack{i,j\in z^{-1}(a)\times z^{-1}(b),\\ i\neq j}}(A_{ij}-Q_{ab})^2.
\]
For any optimizer of the objective function
\[
(\widehat{Q},\hat{z})\in \mbox{argmin}_{Q\in \mathbb{R}^{m\times m},z\in Z_{V,m}} L(Q,z),
\]
the {$m$-}optimal estimator of $\theta_{ij}$ is defined as
\be\label{hat-theta}
\hat{\theta}_{ij}=\widehat{Q}_{\hat{z}(i)\hat{z}(j)},\quad \mbox{for }i\neq j,
\ee
and $\hat{\theta}_{ii}=0$. 
{Rather than computing this global minimizer directly, Step 6 of Algorithm~\ref{alg:graphon-estimator} constructs a block-average estimator of the form $\widetilde{Q}_{ab}(\tilde{z}) = \bar{A}_{ab}(\tilde{z})$, which is then converted into the graphon estimator $\tilde{\theta}^{(r)}$.}

{
To analyze the performance of our procedure, we first recall a result characterizing the error rate of the $m$-optimal estimator. In particular, Theorem 2.3 of \cite{gao_rog} establishes the following error scale for $\hat{\theta}$.}

\begin{lemma}\label{Gao's} Let $m=\left\lceil n^{1/(\alpha+1)} \right\rceil$, and let $\hat{\theta}$ denote the {$m$-}optimal graphon estimator on $V=[n]$ as defined in \eqref{hat-theta}. Then with high probability,
\[
\frac{1}{n^2}\sum_{i,j\in [n]} (\hat{\theta}_{ij}-\theta_{ij})^2 = O\left(n^{-2\alpha/(\alpha+1)}+n^{-1}\log n \right)= 
O(n^{-2\alpha/(\alpha+1)}).
\]
\end{lemma}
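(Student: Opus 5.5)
The plan is to obtain this lemma from the oracle inequality behind Theorem 2.3 of \cite{gao_rog}, instantiated with $m=\lceil n^{1/(\alpha+1)}\rceil$. That inequality says the least-squares $m$-block estimator $\hat\theta$ of \eqref{hat-theta} satisfies, with high probability,
\[
\sum_{i,j}(\hat\theta_{ij}-\theta_{ij})^2 \lesssim \min_{z\in Z_{V,m},\,Q}\sum_{i\neq j}\bigl(\theta_{ij}-Q_{z(i)z(j)}\bigr)^2 + m^2 + n\log m .
\]
The proof therefore has two parts: bound the deterministic approximation term (bias), and then balance it against the stochastic terms $m^2+n\log m$.

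\textbf{Approximation term.} I would take the oracle partition $z^\ast$ that cuts $[n]$ into $m$ consecutive blocks according to $\sigma_{true}$. I would take $Q$ to be the block values of $w$ at representative points.
\begin{itemize}
\item On the complement of $\mathcal{A}_1(1)$, which holds w.e.p., each block covers a latent interval of length $O(\log(n)/m)$, since $n/m\gg \log(n)^2$.
\item I then need $w$ to be $\alpha$-H\"older on $[0,1]^2$ with some constant $L$. This is what Assumption \ref{assumption:Holder} supplies. I would also check that it is consistent with Assumption \ref{DecayRateAssume}. In the interior of the support this is the H\"older hypothesis itself. Near the boundary curves $y=r(x)$ and $y=\ell(x)$, the profile $w(x,r(x)-z)\asymp z^\alpha$ tends to $0$ continuously, so the cell $\{w=0\}$ glues onto the support without a jump. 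Parts (3) and (4) of Assumption \ref{DecayRateAssume}, together with $z^\alpha-(z-d)^\alpha\le d^\alpha$, give $|w(x,\cdot)-w(y,\cdot)|\le M_1|x-y|^\alpha$ near the boundary.
\item Given this, every entry in block $(a,b)$ differs from $Q_{ab}$ by at most $L(\log(n)/m)^{\alpha}$. Summing over all $n^2$ entries, the approximation term is $O(n^2 m^{-2\alpha}\log(n)^{2\alpha})$.
\end{itemize}

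\textbf{Balancing.} Dividing by $n^2$, the total error is
\[
O\!\left(m^{-2\alpha}\log(n)^{2\alpha} + \frac{m^2}{n^2} + \frac{\log m}{n}\right).
\]
With $m=\lceil n^{1/(\alpha+1)}\rceil$, the first two terms both become $n^{-2\alpha/(\alpha+1)}$, the first up to a polylogarithmic factor. The last term is $O(n^{-1}\log n)$. Since $\alpha<1$, we have $2\alpha/(1+\alpha)<1$, so $n^{-1}\log n=o(n^{-2\alpha/(\alpha+1)})$. This gives the second equality in the lemma, where the polylog factor is either absorbed as in \cite{gao_rog} or carried along as a harmless lower-order factor.

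\textbf{Main obstacle.} The step I expect to be hardest is verifying that our graphons fall inside the smoothness class of \cite{gao_rog}. That class is H\"older of order $\alpha$ in their normalization; Gao et al.\ parametrize smoothness so that the rate reads $n^{-2\alpha/(\alpha+1)}$ for $\alpha\le1$. The main case to handle is blocks straddling the boundary curves $r,\ell$, where the derivative of $w$ blows up. My first attempt would be to bound the oscillation of $w$ over such a block directly by $M_0 (\log(n)/m)^\alpha$ using parts (1) and (2) of Assumption \ref{DecayRateAssume}. The reason this should work is that $w$ vanishes on one side of the curve and grows like $z^\alpha$ on the other. The number of straddling blocks is $O(m)$ by Assumption \ref{U-r-relationship} and the monotonicity of $r$ and $\ell$ implied by the Robinson property, so their total contribution is even smaller than the bound above.
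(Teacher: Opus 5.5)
Your route is the paper's route. The paper obtains this lemma simply by invoking Theorem 2.3 of \cite{gao_rog} with $k=m$, and your oracle inequality plus bias bound plus balancing is exactly what that theorem does. As written, however, your argument falls short of the stated bound. It proves only $O\bigl(n^{-2\alpha/(\alpha+1)}\log(n)^{2\alpha}\bigr)$, and your reason for discarding the extra factor is wrong. The factor $\log(n)^{2\alpha}$ multiplies the leading term, so it is not lower order. Nothing in \cite{gao_rog} absorbs it; it disappears only if your bias bound is replaced by a sharper one. The loss comes from your loose block-width estimate $O(\log(n)/m)$.

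There are two easy repairs.
\begin{enumerate}
\item Use the oracle partition of \cite{gao_rog}, which bins by latent value: $z^\ast(i)=\lceil mU_i\rceil$ and $Q_{ab}=w(a/m,b/m)$. This is admissible because the least-squares problem ranges over all of $Z_{V,m}$, unbalanced partitions included. Assumption \ref{assumption:Holder} with symmetry then gives $|\theta_{ij}-Q_{z^\ast(i)z^\ast(j)}|\le 2M_2m^{-\alpha}$ deterministically. The bias term is therefore at most $4M_2^2n^2m^{-2\alpha}$, with no concentration and no logarithm.
\item Keep rank blocks, but make them balanced. (A last block absorbing the remainder, as in the paper's $\tilde z$, can have size close to $m\gg n/m$ here.) Then sharpen the spacing estimate: on $(\mathcal{A}_1(1))^c$, a run of $q$ consecutive true ranks spanning latent length $L>1/n$ satisfies $q-2\ge nL-2\sqrt{nL}\log n$. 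Hence $nL=O(q+\log(n)^2)=O(n/m)$, since $n/m\asymp n^{\alpha/(1+\alpha)}\gg\log(n)^2$.
\end{enumerate}

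Your ``main obstacle'' is not one. Assumption \ref{assumption:Holder} together with symmetry gives
\[
|w(u,v)-w(u',v')|\le M_2(|u-u'|^\alpha+|v-v'|^\alpha)\le 2^{1-\alpha}M_2(|u-u'|+|v-v'|)^\alpha
\]
on all of $[0,1]^2$, including across the curves $v=r(u)$ and $v=\ell(u)$. A H\"older bound is indifferent to derivatives blowing up there. Since also $0\le w\le 1$, this places $w$ in the H\"older class of \cite{gao_rog} with a constant depending only on $M_2$. That is the only hypothesis to check: no straddling-block count and no use of Assumptions \ref{DecayRateAssume} or \ref{U-r-relationship} is needed.

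Conversely, your boundary analysis could not stand in for Assumption \ref{assumption:Holder}. Parts (1)--(2) of Assumption \ref{DecayRateAssume} only give two-sided bounds $\asymp z^\alpha$, which permit jumps. Parts (3)--(4) only control first-coordinate differences for restricted ranges of $x,y$ and of the third point. For instance, take $w(x,y)=\kappa(\min\{x,y\})\,(\rho-|x-y|)_+^\alpha$, where $\kappa$ is nondecreasing, takes values in $[\tfrac12,1]$, is constant on $[0,1-\rho]$, and jumps inside $(1-\rho,1)$. This $w$ is Robinson and satisfies Assumptions \ref{DecayRateAssume} and \ref{U-r-relationship}, yet it is not even continuous. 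So drop the consistency check and the straddling-block argument, and simply note membership in the class of \cite{gao_rog}.
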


We now show that the practical graphon estimator constructed in Algorithm \ref{alg:graphon-estimator} achieves an error rate of order $n^{-2\alpha/(1+\alpha)}$, which matches that of the {$m$-}optimal estimator in Lemma \ref{Gao's}.

For the graphon-estimation bound, assume the following H\"older condition.

\begin{assumption}[H\"older Continuity]\label{assumption:Holder}
There exists a constant $M_2>0$ so that, for all $u,u',v \in [0,1]$,
\[ 
|w(u,v)-w(u',v)| \leq M_2\, |u-u'|^\alpha .
\]
\end{assumption}

For orderings of vertex subsets we use rank functions. If $S$ has total order $\prec$, its rank function is $\sigma:S\to[|S|]$, defined by
\[
\sigma(i)=1+|\{j\in S:j\prec i\}|.
\]
When $S=[n]$, this is a permutation. Definitions~\ref{def:ordering_error} and~\ref{Def-precision-level} apply with this convention.

\begin{theorem}\label{ourTheorem}
Suppose that graphon $w$ satisfies Assumptions \ref{DecayRateAssume}, \ref{U-r-relationship} and \ref{assumption:Holder}. Fix an integer $n \in \mathbb{N}$ and let $G \sim w$ on vertex set $V=[n]$. Let $\alpha >0$ be the decay rate of $w$ and let $\gamma >0$ be the precision parameter of coarse algorithm $P$.  Fix $0 < \delta < 1/(3(1+\alpha))$ and let $\epsilon = \epsilon_0(\delta)$ as defined in \eqref{epsilon_0}. Let $\tilde{\theta}$ be the output of Algorithm~\ref{alg:graphon-estimator} executed with coarse algorithm $P$, accuracy $\delta $,  $m=\lceil n^{\frac{1}{\alpha+1}-3\delta} \rceil$, and input graph $G$. 
Then $\tilde{\theta}$ satisfies
\[
\frac{1}{n^2}\sum_{i,j\in [n]} (\tilde{\theta}_{ij}-\theta_{ij})^2 = O(n^{-\frac{2\alpha}{1+\alpha}+6\alpha \delta})
\]
w.e.p.

\end{theorem}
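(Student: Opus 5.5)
Fix $r\in[3]$. We first show that $\sigma$, restricted to $V^{(r)}_c$, has precision level $d^\ast := n^{-\frac{1}{1+\alpha}+2\delta}$ (up to polylogarithmic factors) w.e.p. We then bound the error of the block-average estimator $\tilde\theta^{(r)}$ on $V^{(r)}_c\times V^{(r)}_c$ by a bias term and a variance term. Finally, since every pair $(i,j)$ lies in $V^{(k)}_c\times V^{(k)}_c$ for at least one $k\in[3]\setminus\{r,\ell\}$, we average over $r$: by Jensen, the error of $\tilde\theta$ is at most $3\sum_r$ (error of $\tilde\theta^{(r)}$ on its complement block).

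\textbf{Ordering step.} Since $V^{(r)}$ is a uniformly random third of $V$, it is itself a sample of size $n^{(r)}\approx n/3$ from $w$. Theorem \ref{theorem-eras} (through Lemma \ref{UpperboundOnd_k}) therefore gives that $\sigma_{train}$ has precision $d_k\le (n^{(r)})^{-\frac{1}{1+\alpha}+\delta}\lesssim n^{-\frac{1}{1+\alpha}+\delta}$ w.e.p. The parameters $C_i'$ are exactly those of \eqref{parameter}, with $p_1$ replaced by $1/3$ and $d_1=n^{-\frac{1}{1+\alpha}+\delta}$. Theorem \ref{theorem-erss} applied to $V_1=V^{(r)}$, $V_2=V$ then gives that $\sigma_{extend}$ has precision $n^{-1/2}d_1^{(1-\alpha)/2}\log(n)^2\le n^{-\frac{1}{1+\alpha}+\delta}$ up to logs. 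One checks that $n^{-1/2}n^{-\frac{1-\alpha}{2(1+\alpha)}}=n^{-\frac{1}{1+\alpha}}$, so the precision only improves. The $\log(n)^2$ loss from Lemma \ref{LemmaExtCorrect} is absorbed into $n^{\delta}$. Independence holds because $\sigma_{extend}$ uses only edges touching $V^{(r)}$, while $\bar A(\tilde z)$ uses only edges inside $V^{(r)}_c$. Hence, conditional on $\tilde z$ and all latent positions, the $A_{ij}$ with $i,j\in V^{(r)}_c$ remain independent Bernoulli$(\theta_{ij})$.

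\textbf{Bias.} Let $N_c=|V^{(r)}_c|$. Each block $\tilde z^{-1}(a)$ consists of $q\approx N_c/m\approx n^{\frac{\alpha}{1+\alpha}+3\delta}$ consecutive vertices in $\sigma$. By the precision statement and Lemma \ref{PreciseLevel-Error}, the rank error is $\mathfrak D\le n d^\ast\log n\ll q$. Using $\mathcal A_1$-type concentration, the latent positions in block $a$ therefore lie in an interval of length $O(q/n+d^\ast)=O(n^{-\frac{1}{1+\alpha}+3\delta})$. By Assumption \ref{assumption:Holder} applied in each coordinate, $|\theta_{ij}-\theta_{kl}|\le 2M_2 (n^{-\frac{1}{1+\alpha}+3\delta})^\alpha$ whenever $(i,j),(k,l)$ lie in the same block pair. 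The squared bias per entry is thus $O(n^{-\frac{2\alpha}{1+\alpha}+6\alpha\delta})$, which is exactly the target rate.

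\textbf{Variance.} Conditionally, $\bar A_{ab}-\bar\theta_{ab}$ is an average of about $q^2$ independent centered bounded variables. Hoeffding's inequality and a union bound over $a,b$ then give $|\bar A_{ab}-\bar\theta_{ab}|\le \log(n)/q$ w.e.p. The squared variance per entry is therefore $O(\log(n)^2 q^{-2})=O(n^{-\frac{2\alpha}{1+\alpha}-6\delta}\log^2 n)$, which is lower order. The diagonal blocks and the last oversized block, which contains at most $2m$ extra vertices, each contribute $O(q/n)$ or $O(m/n)$ fractions of entries with error at most $1$. Both are negligible: the diagonal entries contribute $O(1/n)$, and the condition $\delta<1/(3(1+\alpha))$ guarantees $m\ge1$ and $q\ll n$.

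\textbf{Main obstacle.} The hardest step is the ordering step, in particular verifying that the extension step genuinely attains precision $\approx n^{-1/(1+\alpha)}$ on $V^{(r)}_c$. The rank-based $\tilde z$ must then place each vertex in a block whose latent width is controlled. If a vertex's rank can be misplaced across a block boundary, it suffices to note that its latent position is still within $d^\ast$ of the block's interval, so the Hölder bound still applies with the enlarged width.
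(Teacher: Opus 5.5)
Your argument is sound and reaches the same rate, but by a different and noticeably shorter route.

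\textbf{How the paper proceeds.} The paper never analyzes $\tilde z$ on its own terms. It introduces two oracle objects on the true-order partition $z^\star$: $\check\theta$ (block averages of $A$) and $\theta^\star$ (block averages of $\theta$). It then splits the error as in \eqref{main_inequality}. The last two pieces are the oracle analysis of Gao et al.: the uniform noise bound $O(m^2+n\log m)$, and the H\"older bias $O(n^2m^{-2\alpha})$. The bias is the dominant term and is where the $6\alpha\delta$ comes from. The cost of using $\tilde z$ instead of $z^\star$ is the separate term $\|\tilde\theta^{(r)}-\check\theta\|_2$. It is handled through the bijection of Lemma~\ref{1to1-appendix}, H\"older on matched pairs, Hoeffding on the discrepancy sets $\Delta S$, and the case split $L_1,L_2,L_3$, and turns out to be of lower order.

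\textbf{How you proceed.} You split bias and variance with respect to $\tilde z$ itself, so misordering enters only through the latent width of the estimated blocks. Precision $d^\ast\ll q/n$ traps each $\tilde z$-block in a latent interval of length $O(q/n+d^\ast)$. H\"older then yields exactly the paper's dominant term, and Hoeffding gives a lower-order variance. This removes the bijection lemma and the three cases, and uses the precision-level guarantee directly instead of the rank error $\mathfrak D$.

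\textbf{What each buys.} The paper's decomposition is modular: the misordering cost is isolated and shown negligible on top of an unchanged oracle analysis. The price is much heavier bookkeeping. Your check that $C_1',C_2',C_3'$ coincide with \eqref{parameter} for $np_1=n^{(r)}$ and $d_1=n^{-\frac{1}{1+\alpha}+\delta}$ is also more careful than the paper's one-line appeal to Theorem~\ref{theorem-erss}.

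\textbf{Two things to tighten.}

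First, the independence you invoke does not literally hold for Algorithm~\ref{alg:erss} as written; the paper asserts it too. The quantities $t(i)$ and $b(i)$ for $i\in V_2\setminus V_1=V^{(r)}_c$ are computed from $N(i)\cap V^{(r)}_c$. They determine $F(i,k)$ for $k\in V^{(r)}$, and hence affect $\gamma_F(i)$ and $\sigma_{extend}$ on $V^{(r)}_c$. Your route absorbs this at no cost. Bound the variance term by the uniform-over-partitions estimate behind \eqref{max_estimate}--\eqref{IneqTerm2MainConc}. This gives $O(m^2/n^2+\log(m)/n)$, still below the target because $\alpha<1$. Your bias step is deterministic on the precision and spacing events, so your argument then needs no independence between $\tilde z$ and the edges inside $V^{(r)}_c$. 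Alternatively, order $V^{(r)}_c$ by the Step-10 ordering $\sigma_2'$ of Lemma~\ref{LemmaStepTenCorrect}, which uses only cross edges.

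Second, your handling of exceptional blocks is off.
\begin{itemize}
\item Diagonal blocks need no crude bound. Your H\"older/Hoeffding argument already covers their off-diagonal entries, and only the $n$ diagonal entries need the crude bound, contributing $O(1/n)$. The crude figure $O(q/n)$ you quote would in fact not be negligible when $\alpha>1/2$.
\item The crude $O(m/n)=O(n^{-\frac{\alpha}{1+\alpha}-3\delta})$ for the oversized last block is not below the target for small $\delta$. When $m\le q$, that block has fewer than $2q$ vertices and your width argument handles it directly. When $m\gg q$, it can contain $\Theta(m)$ vertices. Then one needs balanced block sizes ($\lfloor |V^{(r)}_c|/m\rfloor$ or $\lceil |V^{(r)}_c|/m\rceil$) or a finer bias estimate than H\"older alone.
\end{itemize}
The last-block issue is a defect of the partition rule \eqref{tilde-z}, which the paper's proof also passes over by treating every block as having size about $q$.
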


\begin{proof}

Fix $r \in [3]$ until the final step of the proof, where we will average over all cases. 

We begin by defining several families of objects related to partitions of our vertex set. Informally, we use superscripts to indicate what rank function is being used to define the partition, and to indicate the level of averaging. 
In this part of the proof, $\sigma_{\mathrm{true}}: V_c^{(r)} \to [|V_c^{(r)}|]$ denotes the rank function induced by the true ordering after re-ranking the vertices within $V_c^{(r)}$.
Let $z^{\star}=\tilde{z}[V^{(r)}_c, \sigma_{true},m]$ be the partition function corresponding to the true ordering with ranking function $\sigma_{true}$ as defined in \eqref{tilde-z}. Specifically, we use the superscript ``$\star$" to denote quantities block-averaged under the true ordering $z^{\star}$ and the ideal graphon $\theta$ defined by $\theta_{ij}=w(U_i,U_j)$.  From \eqref{tilde-z} it follows that,
\[
(z^{\star})^{-1}(a)=\left\{i\in V^{(r)}_c: (a-1)q < \sigma_{true}(i)\leq a q\right\},\quad \mbox{for } 1\leq a \leq m-1,
\]
and
\[
(z^{\star})^{-1}(m)=\left\{i\in V^{(r)}_c: (m-1)q< \sigma_{true}(i)\leq |V^{(r)}_c| \right\}.
\]

Define $\{Q^{\star}_{ab}\}\in \mathbb{R}^{m \times m}$ by letting $Q^{\star}_{ab}=\bar{\theta}_{ab}(z^{\star})$ for any $a,b\in [m]$. Define $\theta^{\star}_{ij}=Q^{\star}_{z^{\star}(i)z^{\star}(j)}$ for all $i\neq j$, and define the diagonal elements $\theta^{\star}_{ii} = 0$ for all $i\in V^{(r)}_c$. 

Similarly, define $\{\check{Q}_{ab}\}\in \mathbb{R}^{m \times m}$ by $\check{Q}_{ab}=\bar{A}_{ab}(z^{\star})$. Also define $\check{\theta}_{ij}=\check{Q}_{z^{\star}(i)z^{\star}(j)}$ for all $i\neq j$ and set the diagonal elements $\check{\theta}_{ii} = 0$ for all $i\in V^{(r)}_c$. That is, we use the superscript `` $\check{}$ " to denote quantities block-averaged under the true-order partition $z^{\star}$ and the observed adjacency matrix $A$. Recall that the superscript `` $\tilde{}$ " denotes quantities block-averaged under the estimated-order partition $\tilde{z} $ and the observed adjacency matrix $A$.

Fix $a,b\in [m]$. Let $S^{\star}=\{(i,j)\in V^{(r)}_c\times V^{(r)}_c:i\in (z^{\star})^{-1}(a), j\in (z^{\star})^{-1}(b)\}$ denote the set of edges between the vertices corresponding to $(z^{\star})^{-1}(a)$ and $(z^{\star})^{-1}(b)$. Similarly, define $\widetilde{S}=\{(i,j) \in V^{(r)}_c\times V^{(r)}_c:i\in \tilde{z}^{-1}(a), j\in \tilde{z}^{-1}(b)\}$. The focus of our analysis will be on sets $\widetilde{S} \setminus S^{\star}$ and $S^{\star} \setminus \widetilde{S}$, which represent the pairs of vertices that are in $\tilde{S}$ but not in $S^{\star}$, and vice versa. By symmetry, the sizes of these two sets are equal: $|\widetilde{S} \setminus S^{\star}|=|S^{\star} \setminus \widetilde{S}|$. We denote this shared size by $|\Delta S|$, where $\Delta S$ denotes the set $\widetilde{S} \setminus S^{\star}$ or $S^{\star} \setminus \widetilde{S}$, depending on the context.
We omit the dependence of  $S^{\star}$, $\widetilde{S}$ and $\Delta S$ on $a,b$ from the notation whenever it is clear from the context. 

For any $\{A_{ij}\}\in \mathbb{R}^{|V^{(r)}_c|\times |V^{(r)}_c|}$, we denote  by $\|A\|_2=\sqrt{\sum_{i,j\in V^{(r)}_c}A_{ij}^2}$ the usual $l_2$-norm. The error in our estimator can then be expressed as $\|\tilde{\theta}^{(r)}-\theta\|_2^2$. By the triangle inequality, we have
\be\label{main_inequality}
\|\tilde{\theta}^{(r)}-\theta\|_2 \leq \|\tilde{\theta}^{(r)}-\check{\theta}\|_2 + \|\check{\theta}-\theta^{\star}\|_2 + \|\theta^{\star}-\theta\|_2.
\ee

The bulk of our proof consists of bounding the first term on the RHS of \eqref{main_inequality}, which can be rewritten as
\be
\nonumber \|\tilde{\theta}^{(r)}-\check{\theta}\|^2_2 &= \sum_{i,j\in V^{(r)}_c }(\tilde{\theta}_{ij}^{(r)}-\check{\theta}_{ij})^2\\
\label{third_term_main_inequality} &=\sum_{i,j \in V^{(r)}_c}\left(\bar{A}_{\tilde{z}(i)\tilde{z}(j)}(\tilde{z})-\bar{A}_{z^{\star}(i)z^{\star}(j)}(z^{\star})\right)^2.\nonumber
\ee



We bound the sum by splitting the indices $(i,j)$ that are being summed over into three sets
\be 
L_1 & =\{i,j\in V^{(r)}_c: \tilde{z}(i)=z^{\star}(i), \tilde{z}(j) = z^{\star}(j)\}, \nonumber\\
L_{2} &= \{i,j\in V^{(r)}_c: \tilde{z}(i)=z^{\star}(i), \tilde{z}(j) \neq z^{\star}(j) \mbox{ or } \tilde{z}(i)\neq z^{\star}(i), \tilde{z}(j) = z^{\star}(j) \}, \nonumber\\
L_{3} &= \{i,j \in V^{(r)}_c: \tilde{z}(i)\neq z^{\star}(i), \tilde{z}(j)\neq z^{\star}(j)\}, \nonumber
\ee 
and bounding them individually across three cases.

The discrepancy between $\tilde{z}$ and $z^{\star}$ is driven by the difference between the underlying vertex orderings $\sigma$ and $\sigma_{\mathrm{true}}$. By Theorem \ref{theorem-eras}, the ordering $\sigma_{\mathrm{train}}$ produced by Step 3 in Algorithm~\ref{alg:graphon-estimator} achieves error $O(n^{\frac{\alpha}{1+\alpha} + 2\delta})$. By Theorem \ref{theorem-erss}, this error bound extends to $\sigma_{\mathrm{extend}}$ in Step 4, and hence to the final ordering $\sigma$. We thus define
\[
\mathfrak{D}=\max_{i \in V_c^{(r)}} |\sigma(i)-\sigma_{true}(i)| = O\left(n^{\frac{\alpha}{1+\alpha} + 2\delta}\right).
\]

Any vertex $i$ for which $\tilde{z}(i)=b $ and $z^{\star}(i)=a<b$ satisfies $\sigma_{true}(i)\leq aq \leq (b-1)q<\sigma(i)$. It follows from the definition of $\mathfrak{D}$ that
$\sigma_{true}(i)\in [aq-\mathfrak{D}, aq+\mathfrak{D}]$ for some $a\in [m]$. Thus there are at most $(m-1)(2\mathfrak{D})$ such vertices.

 \begin{enumerate} 
 \item Case 1: $\tilde{z}(i)=z^{\star}(i)=a$ and $\tilde{z}(j)=z^{\star}(j)=b$. 

Since $m=\lceil n^{\frac{1}{\alpha+1}-3\delta} \rceil$, $q=\lfloor |V_c^{(r)}|/m \rfloor=n^{\frac{\alpha}{1+\alpha}+3\delta}(1+o(1))$  and $m\mathfrak{D}=O(n^{1-\delta})$.
We note that 
\[
    (n^{(r)})^2\geq |L_1|\geq \left( n^{(r)}-(m-1)(2\mathfrak{D})\right)^2 \geq (n^{(r)})^2 - 4n^{(r)}m\mathfrak{D},
    \]
which implies $|L_1|=\Theta(n^2)$.
If $a\neq b$, then
\be
\nonumber \bar{A}_{\tilde{z}(i)\tilde{z}(j)}(\tilde{z})&-\bar{A}_{z^{\star}(i)z^{\star}(j)}(z^{\star})= \bar{A}_{ab}(\tilde{z})-\bar{A}_{ab}(z^{\star})\\
\label{case1-1} &\leq \frac{1}{q^2}\left(\sum_{i\in \tilde{z}^{-1}(a)}\sum_{j\in \tilde{z}^{-1}(b)}A_{ij}-\sum_{i\in (z^{\star})^{-1}(a)}\sum_{j\in (z^{\star})^{-1}(b)}A_{ij}\right).
\ee
If $a= b$, then
\be
\nonumber \bar{A}_{\tilde{z}(i)\tilde{z}(j)}(\tilde{z})&-\bar{A}_{z^{\star}(i)z^{\star}(j)}(z^{\star})= \bar{A}_{aa}(\tilde{z})-\bar{A}_{aa}(z^{\star})\\
\nonumber &\leq \frac{1}{q(q-1)}\left(\sum_{i\in \tilde{z}^{-1}(a)}\sum_{\substack{j\in \tilde{z}^{-1}(a),\\
j\neq i}}A_{ij}-\sum_{i\in (z^{\star})^{-1}(a)}\sum_{\substack{j\in (z^{\star})^{-1}(a),\\
j\neq i}}A_{ij}\right)\\
\label{case1-2} & = \frac{1}{q(q-1)}\left(\sum_{i,j\in \tilde{z}^{-1}(a)}A_{ij}-\sum_{i,j\in (z^{\star})^{-1}(a)}A_{ij}\right),
\ee
where the last equality comes from the convention $A_{ii}=0$ for all $i\in V^{(r)}_c$.
In view of \eqref{case1-1} and \eqref{case1-2},
\begin{align*}
\bar{A}_{\tilde{z}(i)\tilde{z}(j)}(\tilde{z})&-\bar{A}_{z^{\star}(i)z^{\star}(j)}(z^{\star})\\
&\leq \frac{1}{q^2}\left(\sum_{i\in \tilde{z}^{-1}(a)}\sum_{j\in \tilde{z}^{-1}(b)}A_{ij}-\sum_{i\in (z^{\star})^{-1}(a)}\sum_{j\in (z^{\star})^{-1}(b)}A_{ij}\right)(1+o(1)).
\end{align*}

The remainder of this case is devoted to checking that:
\be\label{MainContribution}
\left(\sum_{i\in \tilde{z}^{-1}(a)}\sum_{j\in \tilde{z}^{-1}(b)}A_{ij}-\sum_{i\in (z^{\star})^{-1}(a)}\sum_{j\in (z^{\star})^{-1}(b)}A_{ij}\right)^2 = O(n^{\frac{2\alpha}{1+\alpha}+4\alpha \delta+10\delta}).
\ee
Once this is proved, then since $|L_1|\leq n^2$ and $q=n^{\frac{\alpha}{1+\alpha}+3\delta}(1+o(1))$ we have that
\be 
\nonumber \sum_{(i,j)\in L_1} \left(\tilde{\theta}^{(r)}_{ij}-\check{\theta}_{ij}\right)^2 &=\sum_{(i,j)\in L_1} \left(\bar{A}_{\tilde{z}(i)\tilde{z}(j)}(\tilde{z})-\bar{A}_{z^{\star}(i)z^{\star}(j)}(z^{\star})\right)^2\\
\label{IneqCase1WhatWeWant} & {=\ O\left(\frac{|L_1|}{q^4}n^{\frac{2\alpha}{1+\alpha}+4\alpha \delta + 10\delta}\right)= O\left(n^{\frac{2}{1+\alpha}+4\alpha\delta-2\delta}\right).}
\ee

To quantify the contributions from the edge discrepancies involved in \eqref{MainContribution}, we define
\[
I_1=\left|\sum_{(i,j)\in S^{\star}\setminus \tilde{S}} (A_{ij}-w(U_i,U_j))\right|
\]
and
\[
I_2=\left| \sum_{(i,j)\in \tilde{S}\setminus S^{\star}}(A_{ij}-w(U_i,U_j)) \right|.
\]
Then
\be
\nonumber \Bigg|\sum_{i\in \tilde{z}^{-1}(a)} \sum_{j\in \tilde{z}^{-1}(b)} & A_{ij} -\sum_{i\in (z^{\star})^{-1}(a)}\sum_{j\in (z^{\star})^{-1}(b)}A_{ij}\Bigg| =  \left| \sum_{(i,j)\in \tilde{S} \setminus S^{\star}} A_{ij} - \sum_{(i,j)\in S^{\star} \setminus \tilde{S}} A_{ij} \right| \\
\label{sumA_ij} &\leq 
I_1+I_2+ \left|\sum_{(i,j)\in S^{\star} \setminus \tilde{S}} w(U_i,U_j) - \sum_{(i,j)\in \tilde{S} \setminus S^{\star} } w(U_i,U_j)\right|.
\ee
We begin by bounding the third term on the right-hand side. 
By Lemma~\ref{1to1-appendix} in Appendix~\ref{sec:appendix-proofs}, there exists a bijection between pairs of indices $(\tilde{i},\tilde{j})\in \tilde{S} \setminus S^{\star}$ and $(i,j)\in S^{\star} \setminus \tilde{S}$. Moreover, under this bijection, the corresponding indices satisfy
\[
|U_i - U_{\tilde{i}}| \text{ and } |U_j - U_{\tilde{j}}| = O\left(\frac{\mathfrak{D}}{n}\right) = O\bigl(n^{-\frac{1}{1+\alpha} + 2\delta}\bigr).
\]
This follows from the fact that the estimated ordering $\sigma$ agrees with $\sigma_{\text{true}}$ at precision level $n^{-\frac{1}{1+\alpha} +2\delta}$ (see Theorems \ref{theorem-eras} and \ref{theorem-erss} and their proofs). By Assumption \ref{assumption:Holder},
\begin{align*}
|w(u,v)-w(\tilde{u},\tilde{v})| & \leq |w(u,v)-w(\tilde{u},v)| +|w(\tilde{u},v)-w(\tilde{u},\tilde{v})| \\
&\leq M_2\,|u-\tilde{u}|^{\alpha}+M_2\,|v-\tilde{v}|^{\alpha}.
\end{align*}

From these observations, we can bound:
\begin{eqnarray*}
&& \left|\sum_{(i,j)\in S^{\star} \setminus \tilde{S}} w(U_i,U_j) - \sum_{(i,j)\in \tilde{S} \setminus S^{\star} } w(U_i,U_j)\right| \\
&=&  \left| \sum_{(i,j)\in S^{\star} \setminus \tilde{S}}  w(U_i,U_j) - w(U_{\tilde{i}},U_{\tilde{j}})\right| \\
&\leq&  \sum_{(i,j)\in S^{\star} \setminus \tilde{S}} \left| w(U_i,U_j) - w(U_{\tilde{i}},U_{\tilde{j}})\right| \\
& \leq&  \sum_{(i,j)\in S^{\star} \setminus \tilde{S}} \left(\left| w(U_i,U_j) - w(U_{\tilde{i}},U_j)\right|+\left| w(U_{\tilde{i}},U_j) - w(U_{\tilde{i}},U_{\tilde{j}})\right| \right)\\
&\leq&  |\Delta S| \left( M_2 |U_i-U_{\tilde{i}}|^{\alpha}+ M_2 |U_j-U_{\tilde{j}}|^{\alpha}\right)\\
&=& \ O(|\Delta S| n^{-\frac{\alpha}{1+\alpha}+2\alpha \delta}).
\end{eqnarray*}
Hence,
\be
\left(\sum_{(i,j)\in S^{\star} \setminus \tilde{S}} w(U_i,U_j) - \sum_{(i,j)\in \tilde{S} \setminus S^{\star} } w(U_i,U_j)\right)^2= O(|\Delta S|^2 \, n^{-\frac{2\alpha}{1+\alpha}+4\alpha \delta}).\nonumber
\ee
We now proceed to bound the remaining terms in Inequality \eqref{sumA_ij}. 
Given the adjacency matrix $A$ and partitions $\tilde{z}$ and $z^{\star}$, we define
\begin{align*}
\mathcal{A}_7(A,\tilde{z},z^{\star}):=\bigcup_{a,b \in [m]} \Big\{|  I_1 |>\sqrt{|\Delta S|}\log(n)+\log (n)^2 \Big\}.
\end{align*}
As discussed at the beginning of this subsection, the edges used in Step~6 of Algorithm~\ref{alg:graphon-estimator} are independent of those used to construct $\tilde{z}$. Moreover, the random variables $\{A_{ij} : i,j \in V_c^{(r)}\}$ are mutually independent. In particular, the collections $\{A_{ij} : (i,j) \in \tilde{S} \setminus S^{\star}\}$ and $\{A_{ij} : (i,j) \in S^{\star} \setminus \tilde{S}\}$ are each composed of independent random variables.
By Hoeffding's inequality together with a probability union bound, we have that 
\[
\left(\mathcal{A}_7(A,\tilde{z},z^{\star})\right)^c \mbox{ holds w.e.p.}
\]
Equivalently,
\be\label{StatisticsDifference}
I_1^{2} = O\left(|\Delta S|\log(n)^2\right)\quad  \mbox{w.e.p.}\nonumber
\ee
Using a similar argument to bound $I_2$ yields 
\be \label{StatisticsDifference2} 
I_2^{2} = O\left(|\Delta S|\log(n)^2\right)\quad  \mbox{w.e.p.}
\ee 
From \eqref{sumA_ij}-\eqref{StatisticsDifference2}, we obtain that w.e.p.
\[
\left(\sum_{i\in \tilde{z}^{-1}(a)}\sum_{j\in \tilde{z}^{-1}(b)}A_{ij}-\sum_{i\in (z^{\star})^{-1}(a)}\sum_{j\in (z^{\star})^{-1}(b)}A_{ij}\right)^2 = O(|\Delta S|\log(n)^2)+O(|\Delta S|^2 n^{-\frac{2\alpha}{1+\alpha}+4\alpha \delta}).
\]

Since the vertex sets of cluster $a$ corresponding to the partition functions $\tilde{z}$ and $z^{\star}$ differ only in the $\mathfrak{D}$ boundary vertices at each end, we can derive
\be\label{DeltaS-Case1}
|\Delta S| = O\left(2\mathfrak{D}q + 2\mathfrak{D}(q-2\mathfrak{D}) \right)= O(n^{\frac{2\alpha}{1+\alpha}+5\delta}),
\ee
from which we obtain the inequality \eqref{MainContribution} immediately, and can conclude with Inequality \eqref{IneqCase1WhatWeWant}.

\item Case 2: $\tilde{z}(i)=z^{\star}(i)$ and $\tilde{z}(j)\neq z^{\star}(j)$, or vice versa. We note that 
\begin{align*}
|L_2| \leq n^{(r)} \cdot m\cdot 2\mathfrak{D},
\end{align*}
which implies $|L_2|=O(n^{2-\delta})$.
Without loss of generality, we assume that $\tilde{z}(i)=z^{\star}(i)=a$ and $\tilde{z}(j)=b\neq z^{\star}(j)$. By Theorem \ref{theorem-eras}, we know that $|z^{\star}(j)-b| \leq 1$. It then follows that
\begin{align*}
\bar{A}_{\tilde{z}(i)\tilde{z}(j)}(\tilde{z})&-\bar{A}_{z^{\star}(i)z^{\star}(j)}(z^{\star})\\
&\leq \frac{1}{q^2}\left(\sum_{i\in \tilde{z}^{-1}(a)}\sum_{j\in \tilde{z}^{-1}(b)}A_{ij}-\sum_{i\in (z^{\star})^{-1}(a)}\sum_{j\in (z^{\star})^{-1}(b\pm 1)}A_{ij}\right)(1+o(1)).
\end{align*}
Using the same notation and arguments as in Case 1, we can prove
\be 
\left(\sum_{i\in \tilde{z}^{-1}(a)}\sum_{j\in \tilde{z}^{-1}(b)}A_{ij}-\sum_{i\in (z^{\star})^{-1}(a)}\sum_{j\in (z^{\star})^{-1}(b\pm 1)}A_{ij}\right)^2  =  O(n^{\frac{2\alpha}{1+\alpha}+4 \delta (\alpha+3)})
\nonumber
\ee
by replacing \eqref{DeltaS-Case1} with
\[
|\Delta S| = O(\mathfrak{D}q+q^2)=O(n^{\frac{2\alpha}{1+\alpha}+6\delta}).
\]
Consequently,
\be 
\nonumber \sum_{(i,j)\in L_2} \left(\tilde{\theta}^{(r)}_{ij}-\check{\theta}_{ij}\right)^2 &=\sum_{(i,j)\in L_2} \left(\bar{A}_{\tilde{z}(i)\tilde{z}(j)}(\tilde{z})-\bar{A}_{z^{\star}(i)z^{\star}(j)}(z^{\star})\right)^2\\
\label{IneqCase2WhatWeWant}&= \frac{|L_2|}{q^4}\, O\left(n^{\frac{2\alpha}{1+\alpha}+4 \delta (\alpha+3)} \right)=  O(n^{\frac{2}{1+\alpha}+4\alpha\delta - \delta}).
\ee

\item Case 3: $\tilde{z}(i)\neq z^{\star}(i), \tilde{z}(j)\neq z^{\star}(j)$. This case has the worst point-wise estimates, though the set $L_{3}$ is small. We note that
\[
|L_3|\leq m^2(2\mathfrak{D})^2=O(m^2 \mathfrak{D}^2)=O(n^{2-2\delta}).
\]
By arguments similar to those above, we obtain
\begin{equation*}
\left(\sum_{i\in \tilde{z}^{-1}(a)}\sum_{j\in \tilde{z}^{-1}(b)}A_{ij}-\sum_{i\in (z^{\star})^{-1}(a)}\sum_{j\in (z^{\star})^{-1}(b\pm 1)}A_{ij}\right)^2  =  O(n^{\frac{2\alpha}{1+\alpha}+4 \delta (\alpha+3)})
\end{equation*}
by replacing \eqref{DeltaS-Case1} with
\[
|\Delta S| = O(q^2)=O(n^{\frac{2\alpha}{1+\alpha}+6\delta}),
\]
and
\be 
\nonumber \sum_{(i,j)\in L_3} \left(\tilde{\theta}^{(r)}_{ij}-\check{\theta}_{ij}\right)^2 &=\sum_{(i,j)\in L_3} \left(\bar{A}_{\tilde{z}(i)\tilde{z}(j)}(\tilde{z})-\bar{A}_{z^{\star}(i)z^{\star}(j)}(z^{\star})\right)^2\\
\label{IneqCase3WhatWeWant} &= \frac{|L_3|}{q^4}\, O\left( n^{\frac{2\alpha}{1+\alpha}+4\delta(\alpha+3)}\right) =  O(n^{\frac{2}{1+\alpha}+4 \alpha \delta - 2 \delta}).
\ee
\end{enumerate}

Combining the estimates \eqref{IneqCase1WhatWeWant}, \eqref{IneqCase2WhatWeWant} and \eqref{IneqCase3WhatWeWant} for Cases 1-3, we conclude that
\be \label{IneqTerm1MainConc}
\|\tilde{\theta}^{(r)}-\check{\theta}\|_2^2 = \sum_{k=1}^3 \sum_{(i,j)\in L_k} \Big(\tilde{\theta}^{(r)}_{ij} -\check{\theta}_{ij}\Big)^2 = O(n^{\frac{2}{1+\alpha}+4 \alpha \delta -\delta}).
\ee

The second term on the RHS of \eqref{main_inequality} is written as
\be
\nonumber \|\check{\theta}-\theta^{\star}\|_2^2 &=\sum_{i,j\in V^{(r)}_c} (\check{\theta}_{ij}-\theta^{\star}_{ij})^2\\
\nonumber &= \sum_{a,b\in [m]}|(z^{\star})^{-1}(a)| |(z^{\star})^{-1}(b)| (\bar{A}_{ab}(z^{\star})-\bar{\theta}_{ab}(z^{\star}))^2\\
\label{max_estimate} &\leq  \max_{z\in Z_{V^{(r)}_c,m}} \sum_{a,b\in [m]} |z^{-1}(a)| |z^{-1}(b)| (\bar{A}_{ab}(z)-\bar{\theta}_{ab}(z))^2.
\ee

By the argument in the proof of Lemma 4.1 in \cite{gao_rog}, we can obtain an upper bound for \eqref{max_estimate}:
\be \label{IneqTerm2MainConc}
\|\check{\theta}-\theta^{\star}\|_2^2 = O( m^2+n \log m ),
\ee
which is of order $O(n^{\frac{2}{1+\alpha}})$ for sufficiently large $n$ such that $\log(n) < n^{\frac{1-\alpha}{1+\alpha}}$.

It remains to bound the third term on the RHS of \eqref{main_inequality}. In view of Assumption \ref{assumption:Holder},  we have for $i\in (z^{\star})^{-1}(a)$ and $j\in (z^{\star})^{-1}(b)$ with $a\neq b$,
\begin{eqnarray*}
&& \left|w(U_i,U_j)- \bar{\theta}_{ab}(z^{\star})\right| \\
&=& \left|w(U_i,U_j)- \frac{1}{|(z^{\star})^{-1}(a)| |(z^{\star})^{-1}(b)|}\sum_{x\in (z^{\star})^{-1}(a)}\sum_{y\in (z^{\star})^{-1}(b)}w(U_x,U_y)\right|\\
&\leq & \frac{1}{|(z^{\star})^{-1}(a)| |(z^{\star})^{-1}(b)|} \sum_{x\in (z^{\star})^{-1}(a)}\sum_{y\in (z^{\star})^{-1}(b)}\left|w(U_i,U_j)- w(U_x,U_y)\right|\\
&= & \max_{\substack{i,x\in (z^{\star})^{-1}(a),\\ j,y\in (z^{\star})^{-1}(b)}}\left(O\left(|U_i-U_x|\right)^{\alpha}+O\left(|U_j-U_y|\right)^{\alpha}\right) = O(m^{-\alpha}),
\end{eqnarray*}
which implies that
\be \label{IneqTerm3MainConc}
\|\theta^{\star}-\theta\|_2^2 = O(n^2 m^{-2\alpha}) = O(n^{\frac{2}{1+\alpha}+6\alpha \delta}).
\ee

Combining Inequalities \eqref{IneqTerm1MainConc}, \eqref{IneqTerm2MainConc} and \eqref{IneqTerm3MainConc} with Inequality \eqref{main_inequality}, we have our main estimate:
\[
\|\tilde{\theta}^{(r)}-\theta\|_2^2 = O(n^{\frac{2}{1+\alpha}+6\alpha \delta }), \quad \mbox{for $r\in [3]$}.
\]

Finally, we combine the estimates for different indices $r \in [3]$. Recalling \eqref{tilde-theta},
\begin{align*}
\nonumber &\qquad \sum_{i,j\in [n]} (\tilde{\theta}_{ij}-\theta_{ij})^2 \\
\nonumber &\leq \sum_{i,j\in [n]}\sum_{r,\ell \in [3]} \mathbf{1}_{\{i\in V^{(r)}, j\in V^{(\ell)}\}} \left(\frac{\sum_{k\in [3]\setminus \{r,\ell\}}\tilde{\theta}^{(k)}_{ij}}{\left|[3]\setminus \{r,\ell\}\right|}-\theta_{ij}\right)^2\\
\nonumber & = \sum_{r,\ell\in [3]} \sum_{i\in V^{(r)}, j\in V^{(\ell)}}\left(\frac{\sum_{k\in [3]\setminus \{r,\ell\}}\tilde{\theta}^{(k)}_{ij}}{\left|[3]\setminus \{r,\ell\}\right|}-\theta_{ij}\right)^2\\
\nonumber &= \sum_{r\neq \ell \in [3]} \sum_{i\in V^{(r)}, j\in V^{(\ell)}} \left(\tilde{\theta}^{([3]\setminus \{r,\ell\})}_{ij}-\theta_{ij}\right)^2 + \sum_{r\in [3]}\sum_{i,j \in V^{(r)}} \left(\frac{\sum_{k\in [3]\setminus \{r\}}\tilde{\theta}^{(k)}_{ij}}{2}-\theta_{ij}\right)^2\\
\nonumber &\leq \sum_{r\neq \ell \in [3]} \sum_{i\in V^{(r)}, j\in V^{(\ell)}} \left(\tilde{\theta}^{([3]\setminus \{r,\ell\})}_{ij}-\theta_{ij}\right)^2 + \sum_{r\in [3]}\sum_{i,j \in V^{(r)}} \frac{1}{2}\sum_{k\in [3]\setminus \{r\}}\left(\tilde{\theta}^{(k)}_{ij}-\theta_{ij}\right)^2\\
\label{theta-4} & = O(n^{\frac{2}{1+\alpha}+6\alpha \delta}).
\end{align*}
Thus, we conclude
\[
\frac{1}{n^2}\|\tilde{\theta}-\theta\|_2^2 = O (n^{-\frac{2\alpha}{1+\alpha}+6\alpha \delta}).
\]
\end{proof}

\subsection{Application to Property Testing}\label{SecPT}

Up to this point, we have written algorithms \textit{assuming} that our random graph $G$ was sampled from a graphon $w$ satisfying certain assumptions. In this section, we write down a way to \textit{test} if a random graph $G$ has this property. The test is based on the $\Lambda$ statistic introduced in \cite{LambdaDef24}, itself based on the $\Gamma$ statistic introduced in \cite{chuangpishit2015linear}.

Before defining the $\Lambda$ statistic, we need one piece of notation: for sets $A,B \subset [0,1]$, we write $A \leq B$ to mean that $a \leq b$ for all $a \in A, b \in B$. For a graphon $w$, the associated $\Lambda$-statistic (from Definition 4 of \cite{LambdaDef24}) is:
\be 
\Lambda(w) &=  \frac{1}{2} \sup_{A \leq B \leq C, |A|=|B|=|C|}\left( \int_{x \in A} \int_{y \in C} w(x,y) dx dy - \int_{x \in B} \int_{y \in C} w(x,y) dx dy \right)  \nonumber\\ 
&+ \frac{1}{2} \sup_{A \leq B \leq C, |A|=|B|=|C|}\left( \int_{x \in A} \int_{y \in C} w(x,y) dx dy - \int_{x \in A} \int_{y \in B} w(x,y) dx dy \right).\nonumber
\ee 
We can immediately see that $\Lambda(w) \geq 0$ for all $w$. By Proposition 5 of \cite{LambdaDef24}, in fact $\Lambda(w) = 0$ if and only if $w$ is Robinson. This suggests using a good approximation $\hat{\Lambda}(G)$ of $\Lambda$ as a test statistic: if the approximation is good enough (for some class of Robinson graphons), we can \textit{reject} the hypothesis that $G \sim w$ for some $w$ in that class whenever we observe a value of $\hat{\Lambda}$ that is large. Our main result of this section, Theorem \ref{Thm_Test_Power}, says that the test statistic computed by Algorithm \ref{alg:graphon-test} has this property:

\begin{theorem} \label{Thm_Test_Power}
Assume that graphon $w$ satisfies Assumptions~\ref{DecayRateAssume} and~\ref{U-r-relationship}. Fix an integer $n \in \mathbb{N}$ and let $G \sim w$ on vertex set $V=[n]$. Let $\alpha >0$ be the decay rate of $w$ and let $\gamma >0$ be the precision parameter of coarse algorithm $P$. Fix $\delta >0$ and let $\epsilon = \epsilon_0(\delta)$ as defined in \eqref{epsilon_0}. For a fixed $\beta \in (0,1)$, set $\mu = n^{-\beta}$. Let $\hat{\Lambda}(G)$ be calculated as in Algorithm~\ref{alg:graphon-test} with coarse algorithm $P$, accuracy $\delta$, size scale $\mu$, and input graph $G$. Then w.e.p.
\be \label{IneqCompTestStat}
\hat{\Lambda}(G) = O(n^{-1-\beta}\log(n)^2) + O\left(\min\left\{n^{-2\beta},\, n^{-\beta-\frac{1}{1+\alpha}+2\delta}\log(n)\right\}\right).
\ee
If Assumption~\ref{assumption:Holder} also holds, then w.e.p. the sharper estimate
\be 
\label{IneqCompTestStatHolder}  \hat{\Lambda}(G) = O( n^{-1-\beta}\log(n)^2) + O\left(n^{-2\beta-\frac{\alpha}{1+\alpha}+2\alpha\delta}\log(n)^\alpha\right)
\ee
also holds. The constants may depend on the graphon and on $\delta$, but not on $n$.
\end{theorem}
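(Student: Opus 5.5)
The plan is to split $\hat{\Lambda}(G)$ into a \emph{sampling-fluctuation} part and an \emph{ordering-error} part, matching the two terms in \eqref{IneqCompTestStat}. I am assuming $\hat{\Lambda}$ is built as follows: compute an ordering $\sigma$ with Algorithm~\ref{alg:eras} (possibly extended by Algorithm~\ref{alg:erss}, as in Algorithm~\ref{alg:graphon-test}); then take the supremum, over triples of $\sigma$-intervals $A\le B\le C$ each of $\mu n$ vertices, of the normalized differences $n^{-2}\bigl(e(A,C)-e(B,C)\bigr)$ and $n^{-2}\bigl(e(A,C)-e(A,B)\bigr)$, averaged with weight $\tfrac12$. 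Here $e(\cdot,\cdot)$ counts edges. First I replace each edge count $e(X,Y)$ by its conditional mean $W(X,Y)=\sum_{x\in X,y\in Y}w(U_x,U_y)$. If edges used for ordering are separated from those used in the counts (by the same $V^{(r)}$ trick as in Algorithm~\ref{alg:graphon-estimator}), then conditional on the ordering each count is a sum of at most $(\mu n)^2$ independent Bernoullis. Bernstein/Hoeffding as for $\mathcal{A}_4$ gives deviation at most $\sqrt{(\mu n)^2}\log n+\log(n)^2=O(\mu n\log(n)^2)$. A union bound over the polynomially many interval triples keeps this w.e.p. Dividing by $n^2$ gives the term $O(n^{-1-\beta}\log(n)^2)$.

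Next I handle the population part, with intervals taken in the \emph{true} ordering. Let $A^\star\le B^\star\le C^\star$ be intervals of $\sigma_{true}$ of size $\mu n$. By the Robinson property \eqref{EqDefGraphonRobinson}, pair up vertices of $A^\star$ and $B^\star$ in order, so every $a\in A^\star,b\in B^\star$ paired satisfies $U_a\le U_b\le U_c$ for all $c\in C^\star$. Then $w(U_a,U_c)\le w(U_b,U_c)$, hence $W(A^\star,C^\star)-W(B^\star,C^\star)\le 0$, and similarly for the second difference. So the discrete population $\Lambda$ computed with $\sigma_{true}$ is nonpositive (equivalently, contributes $0$).

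It remains to compare intervals under $\sigma$ with intervals under $\sigma_{true}$. By Theorem~\ref{theorem-eras} (and Theorem~\ref{theorem-erss} for the extension), w.e.p. $\sigma$ has error $\mathfrak{D}=O(n^{\frac{\alpha}{1+\alpha}+2\delta})$ and precision level $n^{-\frac1{1+\alpha}+2\delta}$. Each $\sigma$-interval $A$ of size $\mu n$ differs from the $\sigma_{true}$-interval of the same ranks in at most $2\mathfrak{D}$ vertices at its ends. So each $W(X,Y)$ changes by at most $O(\mathfrak{D}\,\mu n)$, and also trivially by at most $(\mu n)^2$. After normalizing by $n^2$ this gives $O(\min\{\mu^2,\mu\mathfrak{D}/n\})$, which is the second term of \eqref{IneqCompTestStat}. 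The $\log n$ comes from Lemma~\ref{PreciseLevel-Error}.

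For the Hölder refinement \eqref{IneqCompTestStatHolder}, I would reuse the bijection of Lemma~\ref{1to1-appendix}. Matched vertices of a $\sigma$-interval and the corresponding $\sigma_{true}$-interval have latent positions within $O(\mathfrak{D}\log n/n)$. Then Assumption~\ref{assumption:Holder} bounds each of the $(\mu n)^2$ summand changes by $O((\mathfrak{D}\log n/n)^\alpha)$. This gives $\mu^2\, O\bigl(n^{-\frac{\alpha}{1+\alpha}+2\alpha\delta}\log(n)^\alpha\bigr)$ as required.

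The main obstacle I expect is the independence bookkeeping in the concentration step. The intervals are chosen data-dependently, both through $\sigma$ and through the supremum, so concentration must hold uniformly. I would handle this by conditioning on the $\sigma$-algebra generated by the ordering edges and union-bounding over all $O(n^3)$ interval triples. A secondary subtlety is the ends of $[0,1]$, where the matching in the Robinson step and the boundary shifts interact; I would check these directly.
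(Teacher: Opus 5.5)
Your proposal is correct and is essentially the paper's proof. The paper's steps are the same as yours: Bernstein concentration of the interval edge counts conditional on the sample-split ordering, with a union bound over intervals; then the Robinson inequalities $W(A^\star,C^\star)\le W(B^\star,C^\star)$ and $W(A^\star,C^\star)\le W(A^\star,B^\star)$ for the same-rank true-order intervals; and finally either the crude $O(s\min\{s,D\})$ symmetric-difference bound or, under Assumption~\ref{assumption:Holder}, a rank-matching bound $O(s^2 (D/N)^{\alpha})$ up to the logarithmic factor.

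One caveat applies equally to you and to the paper. As Algorithm~\ref{alg:erss} is written, the extension step computes $t(i),b(i)$ for $i\in V^{(2)}$ from edges inside $V^{(2)}$. So the conditional independence that you, and Lemma~\ref{A6-lemma}, invoke is not literally available. It becomes clean if $\sigma$ on $V^{(2)}$ is read off the Step-10 ordering $\sigma_2'$, which by Lemma~\ref{LemmaStepTenCorrect} already has essentially the required precision.
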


The scale $\mu=n^{-\beta}$ controls the largest intervals considered by the test.

We give the algorithm, then prove the result. Recall that for a set $S \subset V$ and an ordering $\sigma$ on $S$,
\[
\mathcal{I}(S,\sigma)
=
\{ I(S,\sigma,k,l) : k,l \in S \}
\]
denotes the collection of intervals induced by $\sigma$, where
\[
I(S,\sigma,k,l)
=
\{ s \in S : \sigma(k) < \sigma(s) < \sigma(l) \}.
\]
For a given set $V$, rank function $\sigma$ on $V$, and size scale $\mu$, define $\mathcal{G}(V,\sigma,\mu)$ to be the set of triples of disjoint intervals $A,B,C\in \mathcal{I}(V,\sigma)$ satisfying
\be 
1\leq |A| = |B| &= |C| \leq \mu\, |V|, \nonumber\\
\max_{a \in A} \sigma(a) &< \min_{b \in B} \sigma(b), \nonumber\\
\max_{b \in B} \sigma(b) &< \min_{c \in C} \sigma(c). \nonumber
\ee 
The algorithmic procedure is described in Algorithm~\ref{alg:graphon-test}.

\begin{algorithm}[h]
\caption{Robinson Test Statistic}\label{alg:graphon-test}
\begin{algorithmic}[1]
\algrenewcommand\alglinenumber[1]{}
\State \textbf{parameters:} Size scale $\mu$, decay rate $\alpha \in [0,1]$, accuracy $\delta>0$ and coarse algorithm $P$ with precision parameter $\gamma$. 
\State \textbf{input:} Observed graph $G = (V,E)$ with adjacency matrix $A = \{A_{ij}: i,j \in [n]\}$.
\State \textbf{output:} Test statistic $\hat{\Lambda}$.
\algrenewcommand\alglinenumber[1]{\scriptsize #1}
\setcounter{ALG@line}{0}
\State Select a partition of $V$ into two subsets $V^{(1)}$, $V^{(2)}$ of near-equal sizes $|\, |V^{(1)}| - |V^{(2)}|\, | \leq 1$, uniformly at random. 
\State Construct $G_{\mathrm{train}}$ from $\{ A_{ij} : i, j \in V^{(1)} \}$ and run Algorithm~\ref{alg:eras} on $G_{train}$ with $n^{(1)}=|V^{(1)}|$ and parameters $k$, $\{p_i\}_{i=1}^k$, $\{d_i\}_{i=1}^k$, $C_1$, $C_2$ and $C_3$ specified in \eqref{parameter3} to obtain the ordering $\sigma_{train}$ on $V^{(1)}$.
\State Run Algorithm~\ref{alg:erss} on the full graph $G$ with input sets $V^{(1)} \subset V$, the ordering $\sigma_{train}$ on $V^{(1)}$ and parameters $C'_1$, $C'_2$ and $C'_3$ specified in \eqref{parameter3'} to produce the extended ordering $\sigma_{extend}$ on $V$.
\State Define $\sigma$ by $\sigma(i)= |\{j\in V^{(2)}: \sigma_{extend}(j) \leq \sigma_{extend}(i)\}|$ for $i\in V^{(2)}$.
\For{$(A,B,C) \in \mathcal{G}(V^{(2)}, \sigma, \mu)$}
\State Calculate
\[
\hat{\Lambda}_{1}(A,B,C) = \frac{1}{|V^{(2)}|^{2}}\left(\sum_{a \in A, c \in C} A_{ac} -  \sum_{b \in B, c \in C} A_{bc}\right)
\]
and
\[
\hat{\Lambda}_{2}(A,B,C) = \frac{1}{|V^{(2)}|^{2}} \left(\sum_{a \in A, c \in C} A_{ac}
-  \sum_{a \in A, b \in B} A_{ab}\right).
\]
\EndFor
\State Return 
\[
\frac{1}{2}\left(\max_{(A,B,C) \in  \mathcal{G}(V^{(2)}, \sigma, \mu) } \hat{\Lambda}_{1}(A,B,C) + \max_{(A,B,C) \in  \mathcal{G}(V^{(2)}, \sigma, \mu) } \hat{\Lambda}_{2}(A,B,C)\right).
\]
\end{algorithmic}
\end{algorithm}

For any sets $A, B \subseteq S$, use the convention $A_{ii}=\theta_{ii}=0$ and $\theta_{ij}=w(U_i,U_j)$ for $i\neq j$. The expected value of the corresponding adjacency-matrix sum is
\be\label{EqDefWTest}
W(A, B)
=
\sum_{i \in A} \sum_{j \in B} \theta_{ij}
=
\mathbb{E}\left[\sum_{i \in A}\sum_{j \in B} A_{ij}\,\middle|\,(U_\ell)_{\ell\in V}\right].
\ee
We now define
\be\label{EqDefA8Test}
\mathcal{A}_8(S,\sigma)
=
\bigcup_{A,B \in \mathcal{I}(S,\sigma)}
\left\{
\left| \sum_{a \in A} \sum_{b \in B} A_{ab} - W(A,B) \right|
>
\sqrt{W(A,B)}\,\log (n) + \log (n)^2
\right\}.
\ee

\begin{lemma}\label{A6-lemma}
Let $V^{(2)}$ be the subset defined in Step~1, and let $\sigma$ be the rank function on $V^{(2)}$ obtained in Step~4 of Algorithm~\ref{alg:graphon-test}. Then, w.e.p., $(\mathcal{A}_8(V^{(2)},\sigma))^c$ holds.
\end{lemma}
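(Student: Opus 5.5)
Because $\sigma$ may depend on parts of the graph, the plan is to show the concentration bound simultaneously for every pair of sets that could be an interval of some ordering of $V^{(2)}$, and not only for intervals of $\sigma$. For a fixed pair of disjoint sets $A,B\subseteq V^{(2)}$, conditionally on the latent positions $(U_\ell)$ the variables $\{A_{ab}\}_{a\in A,b\in B}$ are independent Bernoulli with total mean $W(A,B)$. Bernstein's inequality then gives
\[
\mathbb{P}\left[\left|\sum_{a\in A}\sum_{b\in B}A_{ab}-W(A,B)\right|>\sqrt{W(A,B)}\log(n)+\log(n)^2\,\middle|\,(U_\ell)\right]\leq 2\exp\left(-c\log(n)^2\right)=n^{-c\log(n)}.
\]
The check is routine: with $t=\sqrt{W}\log n+\log(n)^2$, the exponent $t^2/(2(W+t/3))$ is at least of order $\log(n)^2$ in both regimes $W\geq \log(n)^2$ and $W<\log(n)^2$. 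If $A$ and $B$ overlap, I would split the sum into the overlap part, where each unordered edge is counted twice, and the disjoint parts. Alternatively, I would note that in the application the intervals are disjoint. Either way the same bound holds up to constants.

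The key step is the union bound, and the obstacle is that $\sigma$ is random. I would avoid conditioning on $\sigma$ by enlarging the family. Every interval in $\mathcal{I}(V^{(2)},\sigma)$ has the form $\{s: U_s\in J\}\cap V^{(2)}$ only approximately, so I would not rely on that. Instead I would use the fact that $\sigma$ is the ranking induced by $\sigma_{extend}$. The edges used to build $\sigma_{extend}$ are those inside $V^{(1)}$ and those between $V^{(1)}$ and $V^{(2)}$, as in Algorithm~\ref{alg:erss} run with $V_1=V^{(1)}$, $V_2=V$. The only edges inside $V^{(2)}$ that this algorithm uses are the ones in step 10 defining $\sigma'_2$. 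Looking more carefully, however, $F$ on $(V_2\setminus V_1)^2$ is computed from $N(i)\cap R$ and $N(i)\cap L$ with $R,L\subseteq V_1$. So $\sigma'_2$, $t$, $b$ and hence $\sigma_{extend}$ depend only on edges with at least one endpoint in $V^{(1)}$.

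Hence, conditional on $\mathcal{H}=\sigma\big((U_\ell),(B_\ell),\text{partition},\{A_{ij}: i\in V^{(1)}\text{ or } j\in V^{(1)}\}\big)$, the ordering $\sigma$ is fixed and the edges inside $V^{(2)}$ remain independent Bernoulli$(\theta_{ij})$. The family $\mathcal{I}(V^{(2)},\sigma)$ then has at most $n^2$ members, so there are at most $n^4$ pairs $(A,B)$. A union bound gives conditional failure probability at most $n^{4-c\log n}=n^{-\Omega(\log n)}$. Integrating over $\mathcal{H}$ yields the w.e.p.\ statement.

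I expect the main obstacle to be verifying this independence claim carefully. That means checking that no step of Algorithms~\ref{alg:eras} and \ref{alg:erss}, including the tie-breaking and the sets $N(i)\cap(V_2\setminus V_1)$ used in $t(i),b(i)$, reads an edge with both endpoints in $V^{(2)}$. The sets $N(i)\cap(V_2\setminus V_1)$ do read such edges when $i\in V^{(2)}$. If that breaks the argument, the fallback is to union bound over all orderings consistent with a bounded-error perturbation. Simpler still is to bound over all pairs of intervals of $\sigma_{true}$ shifted by at most $\mathfrak{D}$ ranks, using Theorem~\ref{theorem-eras}. Since each interval of $\sigma$ is determined by its endpoints, it suffices to control all sets of the form $\{s\in V^{(2)}:\sigma(s)\in[x,y]\}$. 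I would first try the conditioning argument as stated, restricting to the information actually determining the final rank vector.
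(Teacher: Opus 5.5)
Your main route is the paper's: condition on a $\sigma$-field that fixes $\sigma$ while leaving the edges inside $V^{(2)}$ independent Bernoulli$(\theta_{uv})$, apply Bernstein (pairing $A_{uv}$ with $A_{vu}$ when the sets overlap), and union bound over at most $N^4$ pairs from $\mathcal{I}(V^{(2)},\sigma)$. Those steps are fine. The gap is the measurability step, and there your proposal contradicts itself. You first assert that $\sigma'_2$, $t$, $b$, and hence $\sigma_{extend}$ use only edges touching $V^{(1)}$. You then correctly observe that this is false.

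In Algorithm~\ref{alg:erss} run with $V_1=V^{(1)}$ and $V_2=V$, the values $t(j),b(j)$ for $j\in V^{(2)}$ are read off $N(j)\cap V^{(2)}$. They decide $F(i,j)$ for every $i\in V^{(1)}$, and those terms enter $\gamma_F(j)$. So the relative order of two vertices of $V^{(2)}$ under $\sigma_{extend}$, and hence $\sigma$ and its intervals, can depend on edges inside $V^{(2)}$. Your $\mathcal{H}$ therefore does not make $\sigma$ measurable. Once you condition on anything that does, those edges are no longer independent Bernoulli variables with mean $W(A,B)$.

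The paper's proof uses the same conditioning. It asserts, citing the sample-splitting remark before Algorithm~\ref{alg:erss}, that these edges are unused. That remark is accurate for $\sigma'_2$ but not for the $t/b$ extension step, so you have found a real weak point that the paper's argument shares.

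Your fallbacks do not close the gap.
\begin{itemize}
\item The sets that arise as intervals of orderings within $\mathfrak{D}$ ranks of $\sigma_{true}$ include essentially arbitrary subsets of boundary windows of size $\Theta(\mathfrak{D})$. There are $2^{\Theta(\mathfrak{D})}$ of them, and with $\mathfrak{D}$ polynomial in $n$ this swamps the $e^{-c\log(n)^2}$ tail.
\item Comparing instead with true-order intervals $A^\star,B^\star$, which are independent of the edges, controls the discrepancy only deterministically, by $O(\mathfrak{D}s)$ edges. That is far above $\sqrt{W(A,B)}\log(n)+\log(n)^2$, so it yields a weaker event than $(\mathcal{A}_8(V^{(2)},\sigma))^c$.
\end{itemize}

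The simplest repair is to make your measurability claim true by construction. Take the rank function on $V^{(2)}$ induced by $\sigma'_2$ from Step~10. It depends only on the latent positions, the split, the auxiliary variables, edges inside $V^{(1)}$, and edges between $V^{(1)}$ and $V^{(2)}$. It also has the precision guarantee of Lemma~\ref{LemmaStepTenCorrect}, which is at least as strong as what the proof of Theorem~\ref{Thm_Test_Power} takes from Theorem~\ref{theorem-erss}. With that choice your conditioning argument goes through verbatim. Keeping $\sigma_{extend}$ would instead require a real decoupling argument for the dependence through $(t(j),b(j))_{j\in V^{(2)}}$.
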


\begin{proof}
Let $N=|V^{(2)}|$, and let $\mathcal{H}$ be generated by the latent positions, the random split in Step~1 of Algorithm~\ref{alg:graphon-test}, the auxiliary randomness in Algorithms~\ref{alg:eras} and~\ref{alg:erss}, and the edge variables used in Steps~2--4 of Algorithm~\ref{alg:graphon-test} to construct $\sigma$. By the sample splitting described before Algorithm~\ref{alg:erss}, the edge variables $\{A_{uv}:u<v,\ u,v\in V^{(2)}\}$ are not used in the construction of $\sigma$ and are left for Step~6 of Algorithm~\ref{alg:graphon-test}. Thus, once $\mathcal{H}$ is fixed, $\sigma$ and the intervals in $\mathcal{I}(V^{(2)},\sigma)$ are fixed, while these edge variables are independent Bernoulli variables with parameters $w(U_u,U_v)$.

Fix $A,B\in \mathcal{I}(V^{(2)},\sigma)$ and write
\[
T(A,B)=\sum_{a\in A}\sum_{b\in B} A_{ab}.
\]
By \eqref{EqDefWTest}, $\mathbb{E}[T(A,B)\mid \mathcal{H}]=W(A,B)$. Since the graph is undirected, the two terms $A_{uv}$ and $A_{vu}$ may both appear in $T(A,B)$. Combining such terms, $T(A,B)-W(A,B)$ is a sum over unordered pairs $\{u,v\}\subset V^{(2)}$ of independent terms, each bounded by $2$, with total variance at most $2W(A,B)$. Bernstein's inequality therefore gives, for a universal constant $c>0$,
\[
\mathbb{P}\left(
|T(A,B)-W(A,B)|>
\sqrt{W(A,B)}\,\log(n)+\log(n)^2
\,\middle|\,\mathcal{H}
\right)
\leq 2e^{-c\log(n)^2}.
\]
By \eqref{EqDefA8Test} and $|\mathcal{I}(V^{(2)},\sigma)|\leq N^2$, a union bound over at most $N^4\leq n^4$ pairs of intervals gives
\[
\mathbb{P}\left(\mathcal{A}_8(V^{(2)},\sigma)\mid \mathcal{H}\right)
\leq 2n^4 e^{-c\log(n)^2}=n^{-\Omega(\log n)}.
\]
Taking expectations over $\mathcal{H}$ proves that $(\mathcal{A}_8(V^{(2)},\sigma))^c$ holds w.e.p.
\end{proof}

\begin{proof} [Proof of Theorem \ref{Thm_Test_Power}]
Let $N=|V^{(2)}|$, so $N=\Theta(n)$, and let $\sigma_{true}$ denote the true rank function on $V^{(2)}$. Set
\[
\rho_N=N^{-\frac{1}{1+\alpha}+2\delta}\log N.
\]
By Theorem~\ref{theorem-eras} in Step~2, Theorem~\ref{theorem-erss} in Step~3, and Lemma~\ref{PreciseLevel-Error}, w.e.p.
\be\label{EqRankErrorTest}
D=\max_{i\in V^{(2)}}|\sigma(i)-\sigma_{true}(i)|,
\qquad \frac{D}{N}=O(\rho_N).
\ee
We work on the event in \eqref{EqRankErrorTest} and on the event $(\mathcal{A}_8(V^{(2)},\sigma))^c$ from Lemma~\ref{A6-lemma}. When proving Inequality \eqref{IneqCompTestStatHolder}, we also use the analogous empirical-spacing estimate for the random half $V^{(2)}$, obtained by the same argument as for $\mathcal{A}_1(p)$: vertices whose true ranks in $V^{(2)}$ differ by $O(D)$ have latent positions differing by $O(D/N)$, w.e.p.

Fix $(A,B,C)\in \mathcal{G}(V^{(2)},\sigma,\mu)$, and write $s=|A|=|B|=|C|\leq \mu N$. For each $Z\in\{A,B,C\}$, choose the integer $r_Z$ such that
\[
Z=\{i\in V^{(2)}:r_Z<\sigma(i)\leq r_Z+s\},
\]
and define the true-order interval
\[
Z^\star=\{i\in V^{(2)}:r_Z<\sigma_{true}(i)\leq r_Z+s\}.
\]
The ordering conditions in the definition of $\mathcal{G}(V^{(2)},\sigma,\mu)$ imply $A^\star\leq B^\star\leq C^\star$ in the true order. By \eqref{EqRankErrorTest}, a vertex can belong to exactly one of $Z$ and $Z^\star$ only if its estimated or true rank is within $D$ of an endpoint of the interval. Therefore, for each $Z\in\{A,B,C\}$,
\be\label{EqSymDiffTest}
|Z\triangle Z^\star|\leq \min\{4D,2s\}=O(\min\{s,D\}),
\ee
where $Z\triangle Z^\star=(Z\setminus Z^\star)\cup (Z^\star\setminus Z)$. Since $w$ is Robinson,
\be\label{EqRobinsonTest}
W(A^\star,C^\star)\leq W(B^\star,C^\star),\qquad
W(A^\star,C^\star)\leq W(A^\star,B^\star).
\ee

We first bound $\hat{\Lambda}_1(A,B,C)$. On $(\mathcal{A}_8(V^{(2)},\sigma))^c$, for any intervals $X,Y\in \mathcal{I}(V^{(2)},\sigma)$ of size at most $s$, Equation~\eqref{EqDefA8Test} gives
\be\label{EqA8UseTest}
\left|\sum_{x\in X}\sum_{y\in Y}A_{xy}-W(X,Y)\right|
\leq \sqrt{W(X,Y)}\log(n)+\log(n)^2
=O(s\log(n)^2),
\ee
where we used $W(X,Y)\leq |X||Y|\leq s^2$. Applying \eqref{EqA8UseTest} to $(A,C)$ and $(B,C)$,
\be
\hat{\Lambda}_1(& A, B,C)\\
&\leq \frac{1}{N^2}O(s\log(n)^2)
+\frac{1}{N^2}\left(W(A,C)-W(B,C)\right)\notag\\
&\leq O(n^{-1-\beta}\log(n)^2)\notag\\
& \quad +\frac{1}{N^2}\Big(|W(A,C)-W(A^\star,C^\star)|+|W(B,C)-W(B^\star,C^\star)|\Big),
\label{EqLambda1DecompTest}
\ee
where the second inequality uses \eqref{EqRobinsonTest}, $s\leq \mu N$, $\mu=n^{-\beta}$, and $N=\Theta(n)$.

The crude bound $0\leq \theta_{ij}\leq 1$ gives, for either pair $(X,Y)=(A,C)$ or $(B,C)$,
\be
|W(X,Y)-W(X^\star,Y^\star)|
&\leq |W(X,Y)-W(X^\star,Y)|+|W(X^\star,Y)-W(X^\star,Y^\star)|\notag\\
&\leq |X\triangle X^\star|\,|Y|+|Y\triangle Y^\star|\,|X^\star|\notag\\
&=O(s\min\{s,D\}),
\label{EqCrudeWTest}
\ee
by \eqref{EqSymDiffTest}. After dividing by $N^2$, \eqref{EqCrudeWTest} contributes at most
\[
O\left(\frac{s}{N}\min\left\{\frac{s}{N},\frac{D}{N}\right\}\right)
=O\left(\mu\min\left\{\mu,\rho_N\right\}\right)
=O\left(\min\left\{n^{-2\beta},\,n^{-\beta-\frac{1}{1+\alpha}+2\delta}\log(n)\right\}\right).
\]
Together with \eqref{EqLambda1DecompTest}, this gives the contribution in \eqref{IneqCompTestStat} for $\hat{\Lambda}_1(A,B,C)$.

Assume now that Assumption~\ref{assumption:Holder} also holds. For each $Z\in\{A,B,C\}$, match the $t$-th vertex of $Z$ in the $\sigma$-ordering to the $t$-th vertex of $Z^\star$ in the $\sigma_{true}$-ordering. By \eqref{EqRankErrorTest}, matched vertices differ by $O(D)$ true ranks in $V^{(2)}$. The empirical-spacing estimate in Inequality \eqref{EqRankErrorTest} and the immediately-following paragraph gives latent-position differences $O(D/N)=O(\rho_N)$ for matched vertices. Hence, for either pair $(X,Y)=(A,C)$ or $(B,C)$,
\be
|W(X,Y)-W(X^\star,Y^\star)|
&\leq \sum_{x\in X}\sum_{y\in Y}\left|w(U_x,U_y)-w(U_{x^\star},U_{y^\star})\right|\notag\\
&\leq \sum_{x\in X}\sum_{y\in Y}\left(M_2|U_x-U_{x^\star}|^\alpha+M_2|U_y-U_{y^\star}|^\alpha\right)\notag\\
&=O(s^2\rho_N^\alpha),
\label{EqHolderWTest}
\ee
where $x^\star$ and $y^\star$ denote the matched vertices, and the second line uses Assumption~\ref{assumption:Holder} and symmetry of $w$. After normalization, \eqref{EqHolderWTest} contributes at most
\[
O(\mu^2\rho_N^\alpha )
=O\left( n^{-2\beta-\frac{\alpha}{1+\alpha}+2\alpha\delta}\log(n)^\alpha\right).
\]
Together with \eqref{EqLambda1DecompTest}, this gives the contribution in \eqref{IneqCompTestStatHolder} for $\hat{\Lambda}_1(A,B,C)$.

The proof for $\hat{\Lambda}_2(A,B,C)$ is the same, using the pairs $(A,C)$ and $(A,B)$ and the second inequality in \eqref{EqRobinsonTest}. The bounds above are uniform over $(A,B,C)\in \mathcal{G}(V^{(2)},\sigma,\mu)$, so taking the two maxima in Algorithm~\ref{alg:graphon-test} and averaging proves \eqref{IneqCompTestStat} and \eqref{IneqCompTestStatHolder}.
\end{proof}

\begin{remark}[Scale of the test statistic and detectable alternatives]\label{RemHowGoodTest}
Theorem~\ref{Thm_Test_Power} bounds
$\hat{\Lambda}(G)$ as a sum of two terms, which we think of as corresponding to a sampling term and an ordering-mismatch term. Under Assumption~\ref{assumption:Holder}, the error is on the scale:
\[
R_{\mathrm{null}}(n,\beta,\delta)
=
n^{-1-\beta}\log(n)^2
+
n^{-2\beta-\frac{\alpha}{1+\alpha}+2\alpha\delta}\log(n)^\alpha .
\]

Is that roughly the correct error, and is it small enough to let us detect non-Robinson graphons? Towards answering the first question, note that an Erd\HungarianH{o}s--R\'enyi graph is sampled from a constant Robinson graphon. For a fixed interval triple with $|A|=|B|=|C|\asymp n^{1-\beta}$, the normalized edge-count fluctuation is on
the scale $n^{-1-\beta}$. Maximizing over polynomially many interval triples adds only logarithmic factors, so the sampling term in Theorem~\ref{Thm_Test_Power} has the correct dependence on $\beta$, up to
logarithmic factors.\footnote{The implied constants in the upper and lower bounds need not be the same.}

Towards answering the second question, suppose there are intervals $I_A<I_B<I_C$ of length comparable to
$\mu=n^{-\beta}$ and a constant $\eta>0$ such that
\[
w(x,z)>\max\{w(x,y),w(y,z)\}+\eta
\]
for all $x\in I_A$, $y\in I_B$, and $z\in I_C$. The associated signal in this interval is at least
\[
\eta\mu^2\asymp \eta n^{-2\beta}.
\]
In particular, for all $\alpha > 0$ and all $\delta$ sufficiently small this is detectable by our test.
\end{remark}

We conclude with some basic comments on Algorithm~\ref{alg:graphon-test} and the associated test:

\begin{enumerate}
    \item The run-time of Algorithm~\ref{alg:graphon-test} is polynomial in $n$, since there are at most $O(n^{6})$ ordered interval triples in the ordered set $[n]$. This is a large improvement on computing the full optimization procedure in \cite{LambdaDef24}, but it is still rather slow. In practice, one could substantially cut down on this computation by only considering intervals of the form $I_{k} = \{ i \, : \, Ck \leq i \leq C(k+1)\}$ for suitably large step-size $C$ and by using sketching or other subsampling methods.
    \item Since Algorithm~\ref{alg:graphon-test} depends on the ordering algorithm, it takes the decay rate $\alpha$ as a parameter. It would be straightforward to replace Steps 2--3 of Algorithm~\ref{alg:graphon-test} by calls to any other seriation algorithm, and one would expect a result similar to Theorem~\ref{Thm_Test_Power} to hold with the corresponding ordering rate replacing the rate supplied by Theorem~\ref{theorem-eras}.
\end{enumerate}

\bibliographystyle{alpha}
\bibliography{ESCBib}

\appendix
\section{Bijections Between Mislabeled Points}\label{sec:appendix-proofs}

We use notation from the proof of Theorem~\ref{ourTheorem}.

\begin{lemma}\label{1to1-appendix}
Suppose that $\sigma$ has error at most $\mathfrak{D}$, where $2\mathfrak{D}<q$. Fix $a,b\in[m]$, and define
\[
\widetilde S_{ab}=\widetilde S_1(a)\times \widetilde S_1(b),\qquad
S^\star_{ab}=S^\star_1(a)\times S^\star_1(b),
\]
where $\widetilde S_1(a)=\tilde z^{-1}(a)$ and $S^\star_1(a)=(z^\star)^{-1}(a)$. For $n$ sufficiently large, there exists a bijection between pairs $(\tilde{i},\tilde{j})\in \widetilde S_{ab}\setminus S^\star_{ab}$ and pairs $(i,j)\in S^\star_{ab}\setminus \widetilde S_{ab}$. This bijection can be chosen so that the matched pairs satisfy
\[
|U_i - U_{\tilde{i}}| = O\left(\frac{\mathfrak{D}}{n}\right)
\]
and
\[
|U_j - U_{\tilde{j}}| = O\left(\frac{\mathfrak{D}}{n}\right).
\]
\end{lemma}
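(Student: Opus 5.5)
The plan is to reduce the two-dimensional statement to a one-dimensional matching between the blocks $\widetilde S_1(a)$ and $S^\star_1(a)$, and then to lift that matching to product sets.

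\textbf{One-dimensional matching.} First observe that $|\widetilde S_1(a)|=|S^\star_1(a)|$ for every $a$. Both are the sets of vertices of $V^{(r)}_c$ whose rank (under $\sigma$, resp.\ $\sigma_{true}$) lies in the same window $((a-1)q,aq]$, or $((m-1)q,|V^{(r)}_c|]$ for $a=m$, and both $\sigma$ and $\sigma_{true}$ are bijections onto $[|V^{(r)}_c|]$. Hence $|\widetilde S_1(a)\setminus S^\star_1(a)|=|S^\star_1(a)\setminus \widetilde S_1(a)|$. Define a bijection $\phi_a:\widetilde S_1(a)\to S^\star_1(a)$ that is the identity on $\widetilde S_1(a)\cap S^\star_1(a)$. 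On the symmetric-difference parts, match elements in increasing order, with one refinement: split by which endpoint they are near. A vertex in $\widetilde S_1(a)\setminus S^\star_1(a)$ has $\sigma$-rank in the window but $\sigma_{true}$-rank outside it, so its true rank lies within $\mathfrak{D}$ of $(a-1)q$ or of $aq$. Since $2\mathfrak{D}<q$, these two boundary zones are disjoint, and each vertex is classified unambiguously as near the lower or the upper end.

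\textbf{Main obstacle: the boundary counts.} The step needing care is showing that the numbers of excess and missing vertices agree \emph{at each end separately}, so that matched vertices sit at the same endpoint. At the upper end, the vertices of $\widetilde S_1(a)\setminus S^\star_1(a)$ with $\sigma_{true}>aq$ are those with $\sigma\le aq<\sigma_{true}$. The vertices of $S^\star_1(a)\setminus\widetilde S_1(a)$ near the top are those with $\sigma_{true}\le aq<\sigma$; here $2\mathfrak{D}<q$ rules out jumping past the lower end. Counting ranks, $|\{\sigma\le aq\}|=|\{\sigma_{true}\le aq\}|=aq$, so the two cross-over sets across the cut $aq$ have equal size. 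The same holds at the cut $(a-1)q$. If the per-end counts are ever awkward (e.g.\ near $a=m$), I would instead match across ends, but then the distance bound fails. So the per-cut counting identity is the key point, and it holds for every cut.

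\textbf{Distance bound.} Matched vertices $\tilde i$ and $i$ then both have true ranks in $[aq-\mathfrak{D},aq+\mathfrak{D}]$ (or the analogous window at the lower end), so their true ranks differ by at most $2\mathfrak{D}$. To turn ranks into latent positions, use the complement of $\mathcal{A}_1(1)$, restricted to the random subset $V^{(r)}_c$ of density about $2/3$; this is the same Chernoff argument as in Lemma~\ref{PreciseLevel-Error}. It gives that every latent interval of length $\ell\gg \log(n)^2/n$ contains $\Theta(n\ell)$ points. Therefore two vertices separated by at most $2\mathfrak{D}$ ranks satisfy $|U_i-U_{\tilde i}|=O(\max\{\mathfrak{D},\log(n)^2\}/n)$, which is $O(\mathfrak{D}/n)$ when $\mathfrak{D}\ge\log(n)^2$ (the regime in which the lemma is applied). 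Without that condition one gets an extra polylogarithmic factor, which is harmless.

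\textbf{Lifting to pairs.} Set $\Phi=\phi_a\times\phi_b:\widetilde S_{ab}\to S^\star_{ab}$. It is a bijection, being a product of bijections, and it fixes $\widetilde S_{ab}\cap S^\star_{ab}$. A pair $(\tilde i,\tilde j)$ lies in the intersection iff both coordinates are fixed points, i.e.\ iff $\Phi(\tilde i,\tilde j)=(\tilde i,\tilde j)$. So $\Phi$ restricts to a bijection from $\widetilde S_{ab}\setminus S^\star_{ab}$ onto $S^\star_{ab}\setminus \widetilde S_{ab}$. Each coordinate is either unchanged or moved by $O(\mathfrak{D}/n)$ in latent position, which gives both bounds. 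The case $a=b$ is identical, since $\phi_a\times\phi_a$ is still a bijection of the product.
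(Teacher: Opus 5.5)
Your proposal is correct and follows essentially the same route as the paper. Both arguments build a one-dimensional matching between $\widetilde S_1(a)\setminus S^\star_1(a)$ and $S^\star_1(a)\setminus\widetilde S_1(a)$ that is the identity on the intersection, use the empirical-spacing bound to turn $O(\mathfrak{D})$ rank differences into $O(\mathfrak{D}/n)$ latent differences, and then restrict the product matching $\phi_a\times\phi_b$ to the complement of the intersection. You also make explicit two points the paper's ``match in increasing true rank'' sentence leaves implicit: the per-cut counting identity, which guarantees that matched vertices sit at the same block boundary, and the fact that the spacing bound really gives $O((\mathfrak{D}+\log(n)^2)/n)$.
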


\begin{proof}
For any $a\in[m]$, write
\begin{align*}
\widetilde S_1(a)&=\{i\in V^{(r)}_c:(a-1)q<\sigma(i)\leq aq\},\\
S^\star_1(a)&=\{i\in V^{(r)}_c:(a-1)q<\sigma_{true}(i)\leq aq\}.
\end{align*}
Define the forward and backward $\mathfrak{D}$-neighborhoods of the block boundaries by
\[
\widetilde F_1(a)=\{i\in V^{(r)}_c:(a-1)q<\sigma(i)\leq (a-1)q+\mathfrak{D}\},
\]
\[
F^\star_1(a)=\{i\in V^{(r)}_c:(a-1)q<\sigma_{true}(i)\leq (a-1)q+\mathfrak{D}\},
\]
\[
\widetilde B_1(a)=\{i\in V^{(r)}_c:aq-\mathfrak{D}<\sigma(i)\leq aq\},
\]
and
\[
B^\star_1(a)=\{i\in V^{(r)}_c:aq-\mathfrak{D}<\sigma_{true}(i)\leq aq\}.
\]
Since $\sigma$ and $\sigma_{true}$ differ by at most $\mathfrak{D}$ in rank, the only discrepancies between $\widetilde S_1(a)$ and $S^\star_1(a)$ occur in these boundary neighborhoods. Matching vertices in increasing true rank gives a bijection from $\widetilde S_1(a)\setminus S^\star_1(a)$ to $S^\star_1(a)\setminus \widetilde S_1(a)$, with the identity map on $\widetilde S_1(a)\cap S^\star_1(a)$, such that matched vertices have true ranks differing by $O(\mathfrak{D})$. The empirical-spacing bound for the latent positions then gives
\[
|U_i-U_{\tilde i}|=O\left(\frac{\mathfrak{D}}{n}\right)
\]
for matched vertices.

Applying this one-dimensional matching in the two coordinates gives a product matching from $\widetilde S_{ab}$ to $S^\star_{ab}$ that is the identity on the intersection. Therefore it restricts to a bijection between $\widetilde S_{ab}\setminus S^\star_{ab}$ and $S^\star_{ab}\setminus \widetilde S_{ab}$, and the coordinatewise latent-position bounds above give the stated estimates.
\end{proof}

\end{document}